\newtheorem{thm}{Theorem}[section]
\newtheorem{cor}[thm]{Corollary}
\newtheorem{prop}[thm]{Proposition}
\newtheorem{lem}[thm]{Lemma}
\newtheorem{quest}[thm]{Question}
\theoremstyle{definition}
\newtheorem{defn}[thm]{Definition}
\newtheorem{fact}[thm]{Fact}
\theoremstyle{remark}
\newtheorem{rem}[thm]{Remark}
\newcommand{\CL}{{\mathcal L}} 
\newcommand{\la}{\langle} 
\newcommand{\ra}{\rangle} 
\newcommand{\CT}{{\mathcal T}} 
\newcommand{\dom}{\mbox{dom}}
\let\c@equation\c@thm
\numberwithin{equation}{section}
\def\Ind{\setbox0=\hbox{$x$}\kern\wd0\hbox to 0pt{\hss$\mid$\hss} \lower.9\ht0\hbox to 0pt{\hss$\smile$\hss}\kern\wd0} 
\def\Notind{\setbox0=\hbox{$x$}\kern\wd0\hbox to 0pt{\mathchardef \nn=12854\hss$\nn$\kern1.4\wd0\hss}\hbox to 0pt{\hss$\mid$\hss}\lower.9\ht0 \hbox to 0pt{\hss$\smile$\hss}\kern\wd0} 
\def\ind{\mathop{\mathpalette\Ind{}}} 
\def\nind{\mathop{\mathpalette\Notind{}}} 
\def\tp{\operatorname{tp}}
\def\Lstp{\operatorname{Lstp}}
\title{Independence over arbitrary sets in NSOP$_{1}$ theories}
\date{\today}
\begin{document}

\author[J. Dobrowolski]{Jan Dobrowolski}
\address{School of Mathematics, University of Leeds, LS2 9JT, Leeds, UK}
\email{J.Dobrowolski@leeds.ac.uk}

\author[B. Kim]{Byunghan Kim}
\address{Department of Mathematics, Yonsei University \\
50 Yonsei-ro Seodaemun-gu \\
Seoul 03722 \\
Republic of Korea}
\email{bkim@yonsei.ac.kr}

\author[N. Ramsey]{Nicholas Ramsey}
\address{Department of Mathematics, UCLA\\
Los Angeles\\
USA}
\email{nickramsey@math.ucla.edu}

\thanks{The first author was supported  by European Union's Horizon 2020 research
and innovation  programme under the Marie Sklodowska-Curie grant agreement No 705410, by NCN Grant no. 2015/19/D/ST1/01174, and by the Foundation for Polish Science (FNP).
The second author has been supported by  Samsung Science Technology Foundation under Project Number SSTF-BA1301-03 and 
 an NRF of Korea grant 2018R1D1A1A02085584. }

\maketitle

\begin{abstract}
We study Kim-independence over arbitrary sets.  Assuming that forking satisfies existence, we establish Kim's lemma for Kim-dividing over arbitrary sets in an NSOP$_{1}$ theory.  We deduce symmetry of Kim-independence and the independence theorem for Lascar strong types.  
\end{abstract}

\setcounter{tocdepth}{1}
\tableofcontents

\section{Introduction}

Kim-independence is a notion of independence that unifies and explains simplicity-like phenomena in a non-simple setting.  Non-forking independence, introduced originally by Shelah, allowed for the organization of core properties of simple theories into a collection of basic principles\textemdash the `non-forking calculus'\textemdash that made possible the deepening of simplicity theory and the streamlined treatment of examples with simple theory.  Following these developments, increased attention was given to classes of theories where a notion of independence might serve a similar explanatory role.  This was motivated from below by the study of independence and amalgamation in natural examples\textemdash the generic parametrized equivalence relations of D\v{z}amonja and Shelah, Granger's generic vector spaces with a bilinear form, and Frobenius fields studied by Chatzidakis\textemdash as well as from above, by the desire to situate the theorems of simplicity theory, especially those dealing with independence relations, in their broadest possible setting.  It was known from the work of Chernikov and the third-named author \cite{ArtemNick} that the existence of a well-behaved notion of independence implies that a theory is NSOP$_{1}$ and the theory of Kim-independence allows one to reverse the implication, using the hypothesis of NSOP$_1$ alone to develop a notion of independence that specializes to natural notions of independence in the examples mentioned above. Since it was introduced in \cite{kaplan2017kim}, it has been established that almost all of the important properties of non-forking independence in simple theories, with the exception of base monotonicity, carry over to Kim-independence in the NSOP$_{1}$ setting.

However, one major limitation for the theory of Kim-independence is that the theory, up to this point, has only been developed for the special situation where one considers independence \emph{over a model}.  The reason for this is that Kim-independence is defined to be independence \emph{at a generic scale}\textemdash it is defined in terms of Kim-dividing formulas, that is formulas that divide with respect to a generic sequence. The initial definition of Kim-dividing took `generic sequence' to mean Morley sequence in a global invariant type.  In any theory, given any type over a model, one can construct a Morley sequence in a global finitely satisfiable, hence invariant, type extending it, which means that the idea of dividing along a generic sequence is not vacuous.  Over arbitrary sets, it is certainly the case that one cannot expect every type to have a finitely satisfiable or even invariant global extension, but one might instead consider those theories in which every complete type has a global \emph{non-forking} extension\textemdash a (non-forking) Morley sequence in this type would furnish a notion of generic sequence allowing one to define Kim-independence over arbitrary sets.   This property, that every type extends to a global non-forking type, is equivalent to what we call \emph{non-forking existence}, which asserts that no type forks over its domain.  As far as we know, \emph{all} NSOP$_{1}$ theories that have been studied satisfy non-forking existence, and it is known that every NSOP$_{1}$ theory is interpretable in an NSOP$_{1}$ theory in which this holds \cite{kruckman2018generic}.      

Here we show that the entire theory of Kim-independence can be extended to give a notion of independence over arbitrary sets in NSOP$_{1}$ theories with non-forking existence.  We modify the definition of Kim-dividing so that a formula $\varphi(x;a)$ Kim-divides over $A$ if it divides along some non-forking Morley sequence over $A$. In the context of an NSOP$_{1}$ theory, this gives an equivalent notion of independence when $A$ is a model but this equivalence over models is obtained as a corollary after the theory of Kim-independence has been developed, using invariant types at every stage.  Consequently, new methods were required in extending the theory to arbitrary sets.  This is most pronounced in the proof of Kim's lemma, which reduces Kim's lemma over \emph{sets} to Kim's lemma over \emph{models}, instead of merely adapting the existing proof.  This uses methods from classical stability, related to the fundamental order, as well as the construction of suitable trees.    

As a consequence of the theory of independence developed here, we prove independent 3-amalgamation for Lascar types in NSOP$_{1}$ theories with non-forking existence.  In simple theories, the independence theorem  over a model was proved in \cite{KimPillay}, from which 3-amalgamation for Lascar types was deduced as a corollary.  Later, Shami proved 3-amalgamation for Lascar types in a simple theory directly \cite{Shami}, which has the result `over models' as an immediate consequence.  Shami's proof influences the approach to 3-amalgamation for $\ind^{K}$ taken here, but we still proceed by deducing the theorem from the independence theorem over models for $\ind^{K}$ in NSOP$_{1}$ theories.  Indeed, our proof relies on Kim's lemma, extension, and the symmetry of Kim-dividing over arbitrary sets in NSOP$_{1}$ theories with existence, and the independence theorem over models for $\ind^{K}$ is an essential ingredient in the proofs of these properties.  It seems unlikely that there is a direct proof of 3-amalgamation over sets without appealing to the corresponding result over models in NSOP$_{1}$ theories.  Lastly, we mention that any simple theory is NSOP$_{1}$ with non-forking existence and in a simple theory, Kim-independence and non-forking independence coincide.  Our results, therefore, generalize these theorems from simplicity theory.  

In the final section, we introduce a syntactic property called the \emph{skew tree property}, or STP, which is equivalent to NSOP$_{1}$ among theories satisfying non-forking existence and may be characterized by a certain amalgamation property.  The general question of whether STP is equivalent to SOP$_{1}$ bears a resemblance to the question of whether SOP$_{1}$ and SOP$_{2}$ are equivalent (both are questions about whether one can `upgrade' a tree assumed to have a limited quantity of inconsistency to one with considerably more) and offers a test case for whether a theory of independence might be useful for collapsing or separating syntactic dividing lines.  We end with several open problems.

\section{The existence axiom}

We let $T$ be a complete theory in a language $\mathcal{L}$.  We fix a monster model $\mathbb{M} \models T$ which is $\overline{\kappa}$-saturated and $\overline{\kappa}$-homogeneous for some sufficiently large $\overline{\kappa}$\textemdash we will implicitly assume all models and parameter sets are of size $<\overline{\kappa}$ and contained in $\mathbb{M}$.  

\subsection{Forking and dividing}
We recall the following standard definitions:
\begin{defn}
Let $A$ be a set of parameters.  
\begin{enumerate}
\item Let $k$ be a natural number. We say a formula
$\varphi(x,a_0)$ {\em $k$-divides} over $A$ if for some $A$-indiscernible sequence $\langle a_i : i<\omega\rangle$ in $\mathrm{tp}(a_0/A)$,
$\{ \varphi(x, a_i)|\ i<\omega\}$ is $k$-inconsistent (meaning its each $k$-element subset is inconsistent).
\item A formula divides over $A$ if it $k$-divides over $A$ for some $k$.
\item A formula $\varphi(x;a)$ \emph{forks} over $A$ if $\varphi(x;a) \vdash \bigvee_{i < k} \psi_{i}(x;b_{i})$ where $\psi_{i}(x;b_{i})$ divides over $A$ for each $i < k$.  
\item A type $p(x)$ forks or divides over $A$ if it implies a formula that forks or divides over $A$, respectively.  
\item We write $a \ind_{A} b$ and $a \ind^{d}_{A} b$ to denote the assertions that $\text{tp}(a/Ab)$ does not fork over $A$ and does not divide over $A$, respectively .  
%
%
\end{enumerate}
\end{defn}
We will make free use of the following facts about the relation $\ind$ (see, e.g. \cite{kim2013simplicity}):
\begin{fact}  
Let $A$ be a set of parameters.
\begin{enumerate}
\item Extension:  For any $c$, if $a \ind_{A} b$, there is $a' \equiv_{Ab} a$ so that $a' \ind_{A} bc$.  
\item Left-transitivity:  If $a \ind_{A} c$ and $b \ind_{Aa} c$ then $ab \ind_{A} c$.  
\item Base monotonicity:  If $a \ind_{A} bc$ then $a \ind_{Ab} c$.
\end{enumerate}
(2) and (3) hold for $\ind^{d}$ as well.
\end{fact}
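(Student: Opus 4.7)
These three assertions are standard properties of forking and dividing, with \cite{kim2013simplicity} giving a textbook treatment; I sketch the plan for each.

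\emph{Extension.} My approach is compactness. Consider the partial type
\[
\Sigma(x) := \tp(a/Ab) \cup \{\neg\psi(x) : \psi \text{ has parameters in } Abc \text{ and forks over } A\}.
\]
Any realization $a'$ satisfies $a' \equiv_{Ab} a$, and since $\tp(a'/Abc)$ contains no formula forking over $A$, it does not fork over $A$, so $a' \ind_A bc$. Consistency of $\Sigma$ follows by compactness: otherwise some $\varphi(x) \in \tp(a/Ab)$ would imply a finite disjunction of forking formulas and hence itself fork over $A$, contradicting $a \ind_A b$.

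\emph{Base monotonicity.} The engine is the elementary monotonicity of dividing in the base: for $B \subseteq B'$, any $B'$-indiscernible sequence in $\tp(e/B')$ is automatically $B$-indiscernible and in $\tp(e/B)$, so any formula dividing over $B'$ also divides over $B$. Applied here, if some $\chi \in \tp(a/Abc)$ forks over $Ab$ via a disjunction of $Ab$-dividing formulas, each disjunct also divides over $A$, so $\chi$ already forks over $A$; contrapositively, $a \ind_A bc$ implies $a \ind_{Ab} c$. The same argument (skipping the disjunction step) proves base monotonicity for $\ind^d$.

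\emph{Left-transitivity.} I would proceed by contrapositive: assuming $ab \nind_A c$, fix $\chi(x,y;c_0) \in \tp(ab/Ac)$ forking over $A$, written as $\chi(x,y;c_0) \vdash \bigvee_i \psi_i(x,y;d_i)$ with each $\psi_i$ dividing over $A$. The plan is to route the failure either down to $a$ (violating $a \ind_A c$) or into $\tp(b/Aac)$ over $Aa$ (violating $b \ind_{Aa} c$). Projecting out $y$ yields $\exists y\,\chi(x,y;c_0) \vdash \bigvee_i \exists y\,\psi_i(x,y;d_i)$, with left-hand side in $\tp(a/Ac)$; if every $\exists y\,\psi_i(x,y;d_i)$ divides over $A$, this contradicts $a \ind_A c$. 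Otherwise, for those indices where $\exists y\,\psi_i$ does not divide over $A$, an Erd\H{o}s--Rado/Ramsey extraction converts the $A$-indiscernible witness to the dividing of $\psi_i(x,y;d_i)$ into an $Aa$-indiscernible sequence preserving both the $k$-inconsistency of $\psi_i$-instances and the type of the initial element over $Aa$, producing dividing of $\psi_i(a,y;d_i)$ over $Aa$. Combining the two kinds of disjuncts exhibits $\chi(a,y;c_0) \in \tp(b/Aac)$ as forking over $Aa$, the desired contradiction. The extraction of an $Aa$-indiscernible sequence with the parameters placed in the correct type over $Aa$ is the delicate step; everything else is routine bookkeeping. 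The same plan applies verbatim to $\ind^d$, with ``divide'' replacing ``fork'' throughout.
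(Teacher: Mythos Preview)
The paper does not prove this Fact; it simply cites \cite{kim2013simplicity}. Your arguments for Extension and Base monotonicity are correct and standard.

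Your plan for Left-transitivity, however, has a gap. In your Case~2 you need $\psi_i(a,y;d_i)$ to divide over $Aa$, which requires an $Aa$-indiscernible sequence starting in $\tp(d_i/Aa)$ along which the $\psi_i$-instances are $k$-inconsistent. Erd\H{o}s--Rado extraction from the $A$-indiscernible witness produces an $Aa$-indiscernible sequence, but only with first term in $\tp(d_i/A)$, not $\tp(d_i/Aa)$; the hypothesis that $\exists y\,\psi_i$ does not divide over $A$ says nothing about the relationship between $a$ and $d_i$ and does not help here (e.g.\ take $\psi_i(x,y;d_i)$ to be $y=d_i$ with $d_i$ non-algebraic over $A$). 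Your ``combining'' step is then unsupported too: for Case-1 indices you have no information about $\psi_i(a,y;d_i)$ over $Aa$, so you cannot conclude that $\chi(a,y;c_0)$ forks over $Aa$. The standard repair is to invoke Extension (which you have already proved) \emph{before} analysing the disjuncts: from $a \ind_A c$, pass to an $Ac$-conjugate so that $a \ind_A c d_0\cdots d_{n-1}$. Now $a \ind^d_A d_i$ for every $i$, and the non-dividing characterization lets you take each witnessing sequence to be $Aa$-indiscernible with first term $d_i$; hence every $\psi_i(a,y;d_i)$ divides over $Aa$, so $\chi(a,y;c_0)$ forks over $Aa$, contradicting $b \ind_{Aa} c$. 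No dichotomy is needed. For $\ind^d$ the situation is simpler still: there is a single dividing formula $\chi(x,y;c)$ with witnessing sequence in $\tp(c/A)$, and $a \ind^d_A c$ is exactly what lets you make that sequence $Aa$-indiscernible starting at $c$.
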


\begin{defn}\cite[Definition 2.2.3]{kim2013simplicity}
Let $A\subseteq B$ and $p \in S(B)$. By a Morley sequence in $p$ over $A$, we mean a
$B$-indiscernible sequence $\langle a_i : i<\omega\rangle$ of realizations of $p$ such that for every $i<\omega$,
$\tp(a_i/Ba_{<i})$ does not fork over $A$. When $A = B$, we omit mentioning “over $A$.”
\end{defn}

In order to construct Morley sequences, we will often use the following consequence of the Erd\H{o}s-Rado theorem: 
\begin{fact} \cite{shelah1980simple} \label{er}
 Let $B$ be a set of parameters and $\kappa$ a cardinal. For any sequence $(a_i)_{i<\beth((2^{|T|+|B|+\kappa})^+)}$ consisting
of tuples of length $\leq \kappa$ there is a $B$-indiscernible 
sequence $(a_j')_{j<\omega}$ based on $(a_i)_{i<\beth(2^{|T|+|B|+\kappa})^+}$ over $B$, i.e., : For every 
$k<\omega$ there are $i_0<i_1<\dots< i_{k-1}$ such that $a_0'a_1'\dots a_{k-1}'\equiv_B a_{i_0}a_{i_1}\dots a_{i_{k-1}}.$
\end{fact}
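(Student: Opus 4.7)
The result is a standard consequence of the Erdős--Rado partition theorem. Set $\mu := 2^{|T|+|B|+\kappa}$. For every $n<\omega$, the number of complete $\mathcal{L}(B)$-types in $n$ variables of length $\leq\kappa$ is at most $\mu$, since each such type is determined by the at most $|T|+|B|+\kappa$ many $\mathcal{L}(B)$-formulas available.

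The finitary step is immediate from the Erdős--Rado relation $\beth_n(\mu)^+\to(\mu^+)^{n+1}_\mu$: coloring each strictly increasing $(n+1)$-tuple of indices by the $\mathcal{L}(B)$-type of the corresponding tuple from $(a_i)$ (a coloring into at most $\mu$ colors), any sequence of length $\geq\beth_n(\mu)^+$ yields a sub-sequence of length $\mu^+$ on which every increasing $(n+1)$-tuple realizes a single common type over $B$. The ambient length $\beth_\lambda$ with $\lambda=(2^{|T|+|B|+\kappa})^+$ dwarfs $\beth_n(\mu)^+$ for every finite $n$, so this extraction is available at every arity.

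To combine these partial indiscernibilities into a single $\omega$-sequence simultaneously $n$-indiscernible for all $n$ and based on $(a_i)$, I would use compactness. Pick any increasing $\omega$-tuple $(i_0,i_1,\dots)$ from the original index set and put $q_n:=\tp(a_{i_0},\dots,a_{i_{n-1}}/B)$, giving a coherent tower $q_1\subseteq q_2\subseteq\dots$ with each $q_n$ realized by an increasing $n$-tuple of $(a_i)$. Introduce new constants $(c_j)_{j<\omega}$ and consider the partial type $\Sigma$ asserting that $(c_j)_{j<\omega}$ is $B$-indiscernible and that $q_n(c_0,\dots,c_{n-1})$ holds for every $n$. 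A realization of $\Sigma$ yields the required sequence $(a'_j)_{j<\omega}$, since indiscernibility together with $q_n(a'_0,\dots,a'_{n-1})$ forces $\tp(a'_0,\dots,a'_{n-1}/B)=q_n=\tp(a_{i_0},\dots,a_{i_{n-1}}/B)$.

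Finite satisfiability of $\Sigma$ is the principal obstacle. Given a fragment involving $c_0,\dots,c_{N-1}$ and finitely many formulas, one must exhibit an $N$-indiscernible sub-tuple of $(a_i)$ whose common $N$-type is precisely $q_N$. I would obtain this by iterated Erdős--Rado within nested sub-sequences chosen to witness the tower at each level, exploiting the enormous ambient bound $\beth_\lambda$ to keep sufficient length across all $N$ extractions. The cardinal bookkeeping is routine given the generosity of the starting bound, and I do not expect any deeper difficulty.
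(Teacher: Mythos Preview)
The paper does not supply a proof of this statement; it is quoted as a fact with a citation to Shelah and used as a black box. So there is nothing to compare your argument against, but your sketch has a genuine gap worth flagging.

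The problem is the order in which you choose the tower $(q_n)$. You pick an \emph{arbitrary} increasing $\omega$-tuple $(i_0,i_1,\dots)$ from the index set, set $q_n=\tp(a_{i_0},\dots,a_{i_{n-1}}/B)$, and only afterwards try to produce $N$-indiscernible sub-tuples of $(a_i)$ realizing $q_N$. But there is no reason such sub-tuples exist: the tower you fixed may fail to be compatible with indiscernibility at all. For a concrete failure, let $T$ be the theory of an equivalence relation with two infinite classes, $B=\emptyset$, and let $a_0$ lie in one class while every $a_i$ with $i>0$ lies in the other. Choosing $(i_0,i_1,\dots)=(0,1,2,\dots)$ gives $q_2=\tp(a_0,a_1)$, which asserts that its two entries are inequivalent; no $2$-indiscernible pair can realize this, and no amount of ``iterated Erd\H{o}s--Rado within nested sub-sequences'' will manufacture one. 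Your proposed extraction cannot witness a pre-chosen tower; it can only produce \emph{some} tower.

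The repair is to reverse the dependence: let the extraction determine the tower rather than the other way around. For any fixed $N$, a single application of Erd\H{o}s--Rado to the original sequence (whose length exceeds $\beth_{N-1}(\mu)^+$) yields an $N$-indiscernible subsequence of length $\mu^+$; its common $N$-type $p_N$ together with its restrictions $p_k$ for $k\le N$ is automatically a coherent finite tower realized inside $(a_i)$. One then argues (e.g.\ by a pigeonhole on the at most $\mu^{\aleph_0}=\mu$ possible infinite towers, or by a more careful nested construction) that a single coherent infinite tower $(p_n)_{n<\omega}$ can be chosen, each $p_n$ realized by an increasing $n$-tuple from $(a_i)$. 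With that tower in hand, your compactness argument goes through verbatim. Note also that naive nesting of extractions does not work as stated: one pass of Erd\H{o}s--Rado drops the length to $\mu^+$, which is too short for a further pass at higher arity, so the ``sufficient length across all $N$ extractions'' requires more care than you indicate.
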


\begin{fact}\label{seqamal}
\begin{enumerate}
\item \cite[Lemma 2.2.5]{kim2013simplicity}
Let $I=\langle a_i\rangle_i$ be a Morley 
 sequence in $p=\mathrm{tp}(a_i/A)$, and let $J=\langle b_j\rangle_j$ be an arbitrary $A$-indiscernible in $p$.
 Then there is $I'\equiv_A I$ such that 
 $b_jI'$  is an $A$-indiscernible for each $b_j$.
 \item Let $I^{\frown}J$ be a Morley 
 sequence over $A$. Then due to left-transitivity and base monotonicity of nonforking independence, we have $J\ind_A I$. 
\end{enumerate}
\end{fact}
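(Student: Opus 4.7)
Part (1) is cited from \cite[Lemma 2.2.5]{kim2013simplicity}, so I would invoke it directly. For completeness, the standard argument combines Fact \ref{er} with the Morley character of $I$: for each $b_j$, one extracts from a long $A$-conjugate of $I$ a sequence $A$-indiscernible over $Ab_j$ still realizing $\tp(a_0/A)^{\omega}$, and a compactness step — using that $I$ is Morley, so the extractions can be made compatibly across the various $b_j$ — glues these into a single $I' \equiv_A I$ with $b_j I'$ being $A$-indiscernible for every $b_j$.

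For part (2), the statement itself signals the tools: left-transitivity and base monotonicity. Write $I = \langle a_i\rangle_{i \in \alpha}$ and $J = \langle b_j\rangle_{j \in \beta}$. By finite character of $\ind$, it suffices to show $b_{j_0}\cdots b_{j_{n-1}} \ind_A I$ for every increasing finite tuple $j_0 < \cdots < j_{n-1}$ from $\beta$, which I would prove by induction on $n$. The base case $n = 0$ is trivial. For the step, since $I^{\frown}J$ is a Morley sequence over $A$, the predecessors of $b_{j_{n-1}}$ in the combined order include all of $I$ together with $b_{j_0}, \ldots, b_{j_{n-2}}$, and the Morley property therefore yields
\[
b_{j_{n-1}} \ind_A I\, b_{j_0}\cdots b_{j_{n-2}}.
\]
Base monotonicity pushes $b_{j_0}\cdots b_{j_{n-2}}$ into the base to give $b_{j_{n-1}} \ind_{A b_{j_0}\cdots b_{j_{n-2}}} I$, and left-transitivity applied to this together with the inductive hypothesis $b_{j_0}\cdots b_{j_{n-2}} \ind_A I$ delivers $b_{j_0}\cdots b_{j_{n-1}} \ind_A I$, closing the induction.

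There is essentially no obstacle: part (2) is a mechanical composition of the three properties of $\ind$ recorded in the previous Fact. The only point worth flagging is that in the Morley step one genuinely needs independence of $b_{j_{n-1}}$ from the \emph{entire} initial segment of the combined sequence $I^{\frown}J$, i.e.\ from $I$ together with the previously chosen $b$'s — which is precisely the content of the hypothesis that $I^{\frown}J$ itself (not $I$ and $J$ separately) forms a Morley sequence over $A$.
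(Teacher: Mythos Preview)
Your proposal is correct and matches the paper's approach: the paper records this as a Fact, citing part (1) and indicating for part (2) only that it follows from left-transitivity and base monotonicity, which is precisely the mechanism you spell out via the induction on the length of a finite subtuple of $J$.
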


Recall that  the {\em existence axiom} (with respect to nonforking independence) says  that  any complete type over a set does not fork over the set, and that
a \emph{global type} is a type in $S(\mathbb{M})$. 

\begin{rem}
The following are equivalent.
\begin{enumerate}
\item $T$ satisfies the existence axiom.
\item For all parameter sets $A$, no consistent formula over $A$ forks over $A$.  
\item For all parameter sets $A$, every type $p \in S(A)$ has a global extension that does not fork over $A$.
\item For all parameter sets $A$ and any $p\in S(A)$, there is a Morley sequence in $p$.
\end{enumerate}
\end{rem}

\begin{proof}
(1)$\implies$(2) is clear.

(2)$\implies$(3):  given a type $p \in S(A)$, consider the partial type
$$
p(x) \cup \{\neg \varphi(x;c) : \varphi(x;c) \text{ forks over }A\}.
$$ 
By compactness, if this partial type is inconsistent, there is some $\psi(x) \in p$ such that 
$$
\psi(x) \vdash \bigvee_{i < k} \varphi_{i}(x;c_{i})
$$
where each $\varphi_{i}(x;c_{i})$ forks over $A$, which means that $\psi(x)$ is a formula over $A$ that forks over $A$, contradicting (2).  Therefore this partial type is consistent, and any completion gives the desired global extension.

(3)$\implies$(4)  Given $p$, let $q$ be a global extension that does not fork over $A$.  Let $\kappa$ be sufficiently large and choose the sequence $\langle a_{i} : i < \kappa \rangle$ so that $a_{i} \models q|_{Aa_{<i}}$.  Then $a_{i} \ind_{A} a_{<i}$ for all $i < \kappa$.  Applying Lemma \ref{er}, we obtain the desired Morley sequence.

(4)$\implies$(1):  If $p$ is an arbitrary complete type over $A$, let $\langle a_{i} : i < \omega \rangle$ be a Morley sequence in $p$.  Then in particular, $a_{1} \models p$ and $a_{1} \ind_{A} a_{0}$, which implies $a_{1} \ind_{A} Aa_{0}$ and hence $a_{1} \ind_{A} A$.  This shows that the existence axiom is satisfied.  
\end{proof}

A global type $q\in S\left(\mathbb{M}\right)$ is called $A$\emph{-invariant} if $b\equiv_{A}b'$ implies $\varphi\left(x;b\right)\in q$ if and
only if $\varphi\left(x;b'\right)\in q$. A global type $q$ is \emph{invariant} if there is some small set $A$ such that $q$ is $A$-invariant.We write $a\ind_{M}^{u}B$ to mean $\text{tp}\left(a/MB\right)$ is finitely satisfiable in $M$, in other words $\text{tp}(a/MB)$ is a \emph{coheir} of its restriction to $M$.  We say $\text{tp}(a/MB)$ is an \emph{heir} of its restriction to $M$ if $B \ind^{u}_{M} a$.  

\begin{defn}
Suppose $q$ is an $A$-invariant global type and $I$ is a linearly
ordered set. By a \emph{Morley sequence in }$q$ \emph{over} $A$
\emph{of order type} $I$, we mean a sequence $\langle b_{\alpha}: \alpha\in I\rangle$
such that for each $\alpha\in I$, $b_{\alpha}\models q|_{Ab_{<\alpha}}$
where $b_{<\alpha}=\langle b_{\beta} : \beta<\alpha \rangle$. Given a linear
order $I$, we will write $q^{\otimes I}$ for the global $A$-invariant
type in variables $\langle x_{\alpha}: \alpha \in I \rangle$ such that for any
$B\supseteq A$, if $\overline{b}\models q^{\otimes I}|_{B}$ then
$b_{\alpha}\models q|_{Bb_{<\alpha}}$ for all $\alpha\in I$. If
$q$ is, moreover, finitely satisfiable in $A$, in which case $b_{\alpha} \ind^{u}_{A} b_{<\alpha}$ for all $\alpha \in I$, then we refer to
a Morley sequence in $q$ over $A$ as a \emph{coheir sequence} over
$A$.  Likewise, an $A$-indiscernible sequence $\langle a_{i} : i < \omega \rangle$ such that $a_{<i} \ind^{u}_{A} a_{i}$ is called an \emph{heir sequence} over $A$.  
\end{defn}

\subsection{NSOP$_{1}$ and Kim-independence}

\begin{defn} \cite[Definition 2.2]{dvzamonja2004maximality} \label{sop1def}
The formula $\varphi(x;y)$ has SOP$_{1}$ if there is a collection of tuples $(a_{\eta})_{\eta \in 2^{<\omega}}$ so that 
\begin{itemize}
\item For all $\eta \in 2^{\omega}$, $\{\varphi(x;a_{\eta | \alpha}) : \alpha < \omega\}$ is consistent.
\item For all $\eta \in 2^{<\omega}$, if $\nu \unrhd \eta \frown \langle 0 \rangle$, then $\{\varphi(x;a_{\nu}), \varphi(x;a_{\eta \frown 1})\}$ is inconsistent.
\end{itemize}
We say $T$ is SOP$_{1}$ if some formula has SOP$_{1}$ modulo $T$.  $T$ is NSOP$_{1}$ otherwise.  
\end{defn}

\begin{fact}\label{arrayequivalent}  \cite[Lemma 5.1]{ArtemNick} \cite[Proposition 2.4]{kaplan2017kim}
The following are equivalent:
\begin{enumerate}
\item $T$ has SOP$_{1}$.  
\item There is a formula $\varphi(x;y)$ and an array $(c_{i,j})_{i < \omega, j < 2}$ so that 
\begin{enumerate}
\item $c_{i,0} \equiv_{\overline{c}_{<i}} c_{i,1}$ for all $i < \omega$.
\item $\{\varphi(x;c_{i,0}) : i < \omega\}$ is consistent.
\item $\{\varphi(x;c_{i,1}) :i < \omega\}$ is $2$-inconsistent.
\end{enumerate}
\item There is a formula $\varphi(x;y)$ and an array $(c_{i,j})_{i < \omega, j < 2}$ so that 
\begin{enumerate}
\item $c_{i,0} \equiv_{\overline{c}_{<i}} c_{i,1}$ for all $i < \omega$.
\item $\{\varphi(x;c_{i,0}) : i < \omega\}$ is consistent.
\item $\{\varphi(x;c_{i,1}) :i < \omega\}$ is $k$-inconsistent for some $k$.
\end{enumerate}
\end{enumerate}
\end{fact}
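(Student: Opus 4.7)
The plan is to verify three implications. The implication (2) $\Rightarrow$ (3) is immediate, since $2$-inconsistency is a special case of $k$-inconsistency with $k = 2$, so the real content lies in (1) $\Rightarrow$ (2) and (3) $\Rightarrow$ (1).

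For (1) $\Rightarrow$ (2), I would start with an SOP$_1$ witness $(a_\eta)_{\eta \in 2^{<\omega}}$ for $\varphi(x;y)$ and first replace it by a strongly indiscernible tree, obtained via the tree version of the modelling theorem (an iterated application of Fact \ref{er}). Here ``strongly indiscernible'' means that sister subtrees are indiscernible over their common ancestors, so in particular any sister pair $a_{\sigma \frown \langle 0 \rangle}$, $a_{\sigma \frown \langle 1 \rangle}$ has the same type over all nodes above them; the two clauses of Definition \ref{sop1def} are preserved. Now set $c_{i,0} := a_{0^{i+1}}$ and $c_{i,1} := a_{0^i \frown \langle 1 \rangle}$. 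Consistency of $\{\varphi(x;c_{i,0}) : i < \omega\}$ is the first clause applied to the all-zero branch. For $2$-inconsistency of $\{\varphi(x;c_{i,1}) : i < \omega\}$: given $i < j$, since $0^j \frown \langle 1 \rangle \unrhd 0^{i+1} = 0^i \frown \langle 0 \rangle$, the second clause gives inconsistency of $\{\varphi(x;a_{0^j \frown \langle 1\rangle}), \varphi(x;a_{0^i \frown \langle 1 \rangle})\}$. Finally, $c_{i,0} \equiv_{\overline{c}_{<i}} c_{i,1}$ is precisely the tree-indiscernibility of the sister pair at $0^i$ over the ancestors, which include $\overline{c}_{<i}$.

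The implication (3) $\Rightarrow$ (1) is the main direction. Starting from an array as in (3), I would first apply Fact \ref{er} to the sequence of pairs $\langle (c_{i,0}, c_{i,1}) : i < \omega \rangle$ to assume it is indiscernible while still satisfying (a), (b), (c). The goal is to produce $(a_\eta)_{\eta \in 2^{<\omega}}$ witnessing SOP$_1$ for $\varphi$. I would build the tree level by level so that at each level $n+1$ and each node $\eta$ of length $n$, the sibling pair $a_{\eta \frown \langle 0 \rangle}$, $a_{\eta \frown \langle 1 \rangle}$ realizes an automorphic image of $\tp(c_{n,0} c_{n,1} / \overline{c}_{<n})$ over a suitable ``history'' within the tree. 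Clause (a) is precisely what allows one to freely choose $0$ or $1$ at each branching without changing the type of the realized subtree over its ancestors. Clause (b), combined with indiscernibility of the array, then guarantees that along any branch $\eta \in 2^\omega$ the set $\{\varphi(x;a_{\eta | \alpha}) : \alpha < \omega\}$ is consistent, because the types of the selected path are automorphic copies of the types realized by the left column $(c_{i,0})_{i < \omega}$. Clause (c) supplies the required inconsistency: whenever $\nu \unrhd \eta \frown \langle 0 \rangle$, the pair $(a_\nu, a_{\eta \frown \langle 1 \rangle})$ pulls back to a configuration that is part of a $k$-inconsistent set by (c), and combining with indiscernibility and compactness refines this to the $2$-inconsistency demanded by Definition \ref{sop1def}.

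The main obstacle will be arranging the inductive tree construction in (3) $\Rightarrow$ (1) so that consistency along every branch and inconsistency at every ``right turn'' hold simultaneously. The passage to an indiscernible sequence of pairs, together with the type equality in (a), is precisely what provides enough homogeneity to carry out the construction uniformly at each level, after which compactness assembles the finite approximations into a global tree indexed by $2^{<\omega}$.
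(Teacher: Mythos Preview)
The paper does not prove this statement: it is recorded as a \emph{Fact} with citations to \cite[Lemma 5.1]{ArtemNick} and \cite[Proposition 2.4]{kaplan2017kim}, so there is no in-paper argument to compare your proposal against. That said, your outline follows the route taken in those references, so a few comments on its soundness are in order.

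Your $(1)\Rightarrow(2)$ is essentially correct, but note a small imprecision: you describe the needed tree indiscernibility as ``sister subtrees are indiscernible over their common ancestors,'' yet $\overline{c}_{<i}$ contains the nodes $a_{0^{j}\frown\langle 1\rangle}$ for $j<i$, which are \emph{not} ancestors of $0^{i+1}$ or $0^{i}\frown\langle 1\rangle$. What you actually need is full $s$-indiscernibility (as in Fact~\ref{modeling}), under which the two tuples $(0^{i+1},\overline{c}_{<i})$ and $(0^{i}\frown\langle 1\rangle,\overline{c}_{<i})$ have the same quantifier-free tree type; this does give $c_{i,0}\equiv_{\overline{c}_{<i}}c_{i,1}$.

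The direction $(3)\Rightarrow(1)$ has a genuine gap as written. Your tree construction, carried out naively, produces nodes $a_{\eta\frown\langle 1\rangle}$ that pull back to elements of the right column $(c_{i,1})_{i<\omega}$, but from clause~(c) you only get $k$-inconsistency of such elements, whereas Definition~\ref{sop1def} demands that each pair $\{\varphi(x;a_{\nu}),\varphi(x;a_{\eta\frown\langle 1\rangle})\}$ be inconsistent. The sentence ``combining with indiscernibility and compactness refines this to the $2$-inconsistency'' does not explain the mechanism, and there is no such refinement for the \emph{same} formula $\varphi$. The standard fix is to first build a tree (or stretch the array) with $k$-inconsistency, then pass to the conjunction $\psi(x;y_{0},\ldots,y_{k-2}):=\bigwedge_{l<k-1}\varphi(x;y_{l})$ and reindex by collapsing blocks of $k-1$ levels into one; this converts $k$-inconsistency into $2$-inconsistency for $\psi$, yielding SOP$_{1}$ for $\psi$. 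You should make this step explicit.
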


\begin{defn}
Suppose $A$ is a set of parameters.  
\begin{enumerate}
\item We say a formula
$\varphi(x,a_0)$ {\em Kim-divides} over $A$ if for some Morley sequence $\langle a_i : i<\omega\rangle$ in $\mathrm{tp}(a_0/A)$,
$\{ \varphi(x, a_i)|\ i<\omega\}$ is inconsistent. 
\item A formula $\varphi(x;a)$ {\em Kim-forks} over $A$ if $\varphi(x;a) \vdash \bigvee_{i < k} \psi_{i}(x;b_{i})$ where $\psi_{i}(x;b_{i})$ Kim-divides over $A$ for all $i < k$.  
\item Likewise we say a type $p(x)$ Kim-forks or Kim-divides over $A$ if it implies a formula that Kim-forks or Kim-divides over $A$, respectively.  
\item We write $a \ind^{K}_{A} b$ to denote the assertion that $\text{tp}(a/Ab)$ does not Kim-fork over $A$.  
%
%
\end{enumerate}
\end{defn}

In \cite{kaplan2017kim}, the following definition of Kim-dividing was introduced:  $\varphi(x;a)$ Kim-divides over $A$ if $\{\varphi(x;a_{i}): i < \omega\}$
 is inconsistent for some  Morley sequence 
$\langle a_i\mid i<\omega\rangle$ in an $A$-invariant global type with $a=a_0$.  However, by \cite[Theorem 7.7]{kaplan2017kim}, the definition of Kim-dividing given above is equivalent to the definition of Kim-dividing over models in NSOP$_{1}$ theories.  There are known examples even of simple theories where types over sets (even assuming they are boundedly closed) do not have global invariant extensions 
(see \cite{fff}) so the above definition will be the more fruitful one for extending the theory to arbitrary sets.  

We remark that if $A$ and $B$ are sets of parameters and $p$ is a partial type over $B$ that does not Kim-fork over $A$, then there is a complete type $q$ over $B$ extending $p$ which also does not Kim-fork over $A$.  This does not require existence and follows from the same argument as the analogous statement for non-forking:  one simply takes any completion of $p$ together with the negation of all formulas over $B$ which Kim-fork over $A$, which is consistent by compactness.  

The following facts summarize the key properties of Kim-independence that have been established over models:

\begin{fact} \label{basic kimindep facts}
\cite[Theorem 3.15]{kaplan2017kim}  \label{kimslemma} The following
are equivalent for the complete theory $T$: 
\begin{enumerate}
\item $T$ is NSOP$_{1}$. 
\item Kim's lemma for Kim-dividing over models: Given any model $M\models T$ and formula
$\varphi\left(x;b_{0}\right)$, $\varphi\left(x;b_{0}\right)$ Kim-divides over $M$ if and only if for any $\langle b_{i} : i < \omega \rangle$ Morley over $M$ in some global $M$-invariant type, $\{\varphi\left(x;b_{i}\right) : i < \omega\}$ is inconsistent.
\item Symmetry of Kim independence over models: $a\ind_{M}^{K}b$ iff $b\ind_{M}^{K}a$
for any $M\models T$.
\item Independence theorem over models: if $A\ind^{K}_{M}B$, $c\ind^{K}_{M}A$,
$c'\ind^{K}_{M}B$ and $c\equiv_{M}c'$ then there is some $c''\ind^{K}_{M}AB$
such that $c''\equiv_{MA}c$ and $c''\equiv_{MB} c'$.
\item If $\varphi(x;a_0)$ does not Kim-divide over $M$ and $\langle a_{i} : i < \omega \rangle$ is an $M$-indiscernible sequence with $a_{i} \ind_{M}^{K} a_{<i}$ then $\{\varphi(x;a_{i}) : i < \omega\}$ is consistent.  
\end{enumerate}
\end{fact}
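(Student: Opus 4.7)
The plan is to fix a model $M \models T$ throughout and establish a cycle $(1) \Rightarrow (2) \Rightarrow (3) \Rightarrow (4) \Rightarrow (5) \Rightarrow (1)$. The implications that go from any of (2)--(5) \emph{to} (1) are the easy direction: a formula witnessing SOP$_1$ via Fact \ref{arrayequivalent} translates routinely into a concrete failure of each of the good properties of Kim-independence (a formula that is consistent along one $M$-invariant Morley sequence but not another, a non-symmetric Kim-forking configuration, two Kim-independent copies of a type that cannot be amalgamated, or a Kim-independent indiscernible sequence of parameters with inconsistent instances). So the substance is in the forward direction of each step.

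The heart of the matter is $(1) \Rightarrow (2)$, Kim's lemma for Kim-dividing over a model. Suppose $\varphi(x;b_0)$ Kim-divides over $M$ along the Morley sequence of some global $M$-invariant type $q_0 \supseteq \tp(b_0/M)$ but \emph{not} along the Morley sequence of some other $M$-invariant extension $q_1 \supseteq \tp(b_0/M)$. The goal is to extract from these two sequences an array $(c_{i,j})_{i<\omega, j<2}$ as in Fact \ref{arrayequivalent}(3): the $j=0$ column consistent with $\varphi$ and the $j=1$ column $k$-inconsistent, while $c_{i,0} \equiv_{\overline{c}_{<i}} c_{i,1}$ at each row. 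The $M$-invariance of $q_0, q_1$ is precisely what lets realizations of $q_0|_{M\overline{c}_{<i}}$ and $q_1|_{M\overline{c}_{<i}}$ at stage $i$ share the same type over $\overline{c}_{<i}$, providing the row-wise congruence; the consistency/inconsistency data on the two columns is inherited from the two Morley sequences. The resulting array witnesses SOP$_1$, contradicting (1).

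Given Kim's lemma, $(2) \Rightarrow (3)$ is the standard symmetry argument: a configuration $a \ind^K_M b$ with $b \nind^K_M a$ is converted, via Kim's lemma applied on both sides together with $\ind^u$-extension to build a coheir sequence through $b$, into precisely the SOP$_1$ array of Fact \ref{arrayequivalent}. For $(3) \Rightarrow (4)$, the independence theorem over models is proved by using extension and symmetry to build a global $M$-invariant Morley sequence of conjugates of $c$ indexed along $A$, realizing this sequence consistently with $\tp(c'/MB)$ via Kim's lemma, and reading off an amalgam $c''$ Kim-independent from $AB$. Finally, $(4) \Rightarrow (5)$ is a Ramsey-type argument: apply Fact \ref{er} to extract an $M$-indiscernible sequence from $\langle a_i\rangle$ that still enjoys the stepwise Kim-independence, then iterate the independence theorem to amalgamate realizations of $\varphi(x;a_i)$ along this sequence.

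The main obstacle is $(1) \Rightarrow (2)$: once Kim's lemma is in hand, the remaining steps are essentially formal consequences of Kim's lemma, extension, symmetry, and the SOP$_1$ array characterization of Fact \ref{arrayequivalent}. By contrast, pushing a mismatch between two $M$-invariant Morley sequences into the two-column array of Fact \ref{arrayequivalent} is the genuinely combinatorial step, and it is what forces the hypothesis of NSOP$_1$ to be used in its array form rather than its tree form.
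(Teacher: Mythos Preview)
The paper does not prove this statement; it is recorded as a \emph{Fact} with a citation to \cite{kaplan2017kim} and used as a black box. So the comparison is really between your plan and the actual Kaplan--Ramsey arguments. Your sketch of $(1)\Rightarrow(2)$ contains a genuine gap. You assert that ``the $M$-invariance of $q_0,q_1$ is precisely what lets realizations of $q_0|_{M\overline{c}_{<i}}$ and $q_1|_{M\overline{c}_{<i}}$ at stage $i$ share the same type over $\overline{c}_{<i}$.'' But $M$-invariance of a global type $q$ only says that $q|_{MB}$ depends on $\tp(B/M)$; it says nothing about how two \emph{different} $M$-invariant extensions of the same $p\in S(M)$ restrict to a given $M\overline{c}_{<i}$. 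In general $q_0|_{M\overline{c}_{<i}}\neq q_1|_{M\overline{c}_{<i}}$ (think of the random graph with $q_0$ the ``disconnected from everything new'' type and $q_1$ the ``connected to everything new'' type), so realizing each at stage $i$ does \emph{not} yield $c_{i,0}\equiv_{\overline{c}_{<i}}c_{i,1}$. The Kaplan--Ramsey proof of Kim's lemma does end by producing an SOP$_1$ array, but the construction interleaves the two invariant types more carefully than ``realize $q_0$ in one column and $q_1$ in the other''; the row-wise congruence has to be engineered, not read off from invariance.

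Your sketches of $(2)\Rightarrow(3)$ and $(3)\Rightarrow(4)$ are also far thinner than the actual arguments. In \cite{kaplan2017kim}, symmetry is obtained from Kim's lemma via the machinery of tree Morley sequences (a substantial construction, echoed in Section~\ref{secsym} of the present paper over sets), not by a short coheir manipulation; and the independence theorem over models is a separate theorem with its own proof, not a formal consequence of symmetry plus Kim's lemma. Your $(4)\Rightarrow(5)$ is essentially right (iterate the independence theorem along the $\ind^{K}$-independent sequence; the Erd\H{o}s--Rado step is unnecessary since the sequence is already $M$-indiscernible), and the ``easy'' directions back to $(1)$ are fine in spirit. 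But as written, the forward chain misses the two hardest pieces of content and misidentifies the mechanism in the one you single out as the crux.
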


\begin{lem}\label{uniformitylemma}
Assume $T$ satisfies the existence axiom.  Suppose that $I=\langle a_i : i< \omega\rangle$ is Morley over $A$. Then for any $b'$ there are $b_i$ such that 
$a_0b'\equiv_A a_0b_0 $ and $\langle a_ib_i|\ i<\omega\rangle$ is Morley over $A$.
\end{lem}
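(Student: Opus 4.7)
By the existence axiom applied to $\tp(a_0b'/A)$, fix a Morley sequence $\langle c_id_i:i<\omega\rangle$ over $A$ in this type; in particular $c_0d_0\equiv_A a_0b'$, and the first-coordinate reduct $\langle c_i\rangle$ is a Morley sequence in $\tp(a_0/A)$ over $A$. Since $\langle a_i\rangle$ is also $A$-indiscernible in $\tp(a_0/A)$, Fact~\ref{seqamal}(1) produces $\langle c_i'\rangle\equiv_A\langle c_i\rangle$ such that for each $j<\omega$, the sequence $(a_j,c_0',c_1',\ldots)$ is $A$-indiscernible. By applying an $A$-automorphism witnessing $\langle c_i'\rangle\equiv_A\langle c_i\rangle$ to the $d_i$'s as well, we obtain a Morley sequence $\langle c_i'd_i'\rangle$ in $\tp(a_0b'/A)$ over $A$ with the same alignment property.

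The key structural observation is that each $(a_j,c_0',c_1',\ldots)$ is itself Morley over $A$. Indiscernibility is given by the previous step, and for the non-forking clause: indiscernibility yields $(a_j,c_0',\ldots,c_{i-1}',c_i')\equiv_A (c_0',c_1',\ldots,c_i',c_{i+1}')$, hence $\tp(c_i'/A\,a_jc_{<i}')=\tp(c_{i+1}'/A\,c_{\le i}')$, which does not fork over $A$ because $\langle c_i'\rangle$ is Morley. As $\omega$-sequences $(a_j,c_0',c_1',\ldots)$ and $\langle c_i'\rangle$ are therefore $A$-indiscernibles with matching 1-types, and the indiscernibility forces $(a_j,c_0',c_1',\ldots)\equiv_A (c_0',c_1',c_2',\ldots)$. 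By homogeneity of $\mathbb{M}$ there is $\rho_j\in\mathrm{Aut}(\mathbb{M}/A)$ with $\rho_j(c_0')=a_j$ and $\rho_j(c_i')=c_{i-1}'$ for $i\ge 1$. Applying $\rho_j$ to $\langle c_i'd_i'\rangle$ gives a Morley sequence in $\tp(a_0b'/A)$ over $A$ whose leading pair is $(a_j,\rho_j(d_0'))$; so the ``leading first coordinate equals $a_j$'' version of the lemma is solvable for each $j$ individually.

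The main obstacle is producing the pairs $(a_i,b_i)$ for all $i$ simultaneously. The plan is to iterate the insertion: $\rho_0$ first yields a Morley sequence of the form $\langle a_0b_0,e_0f_0,e_1f_1,\ldots\rangle$ with $a_0b_0\equiv_A a_0b'$; then reapply Fact~\ref{seqamal}(1) to the Morley tail $\langle e_i\rangle$ against the $A$-indiscernible $\langle a_i\rangle_{i\ge 1}$ to align it, and use homogeneity once more to insert $a_1$ immediately after $a_0$, and so on. At each finite stage $n$ this produces a Morley sequence in $\tp(a_0b'/A)$ over $A$ whose first coordinates begin with $a_0,a_1,\ldots,a_n$ and which respects $a_0b_0\equiv_A a_0b'$. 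These finite-stage constructions witness the consistency of the partial type over $A\cup\{a_i:i<\omega\}\cup\{b'\}$ asserting ``$\langle a_iy_i:i<\omega\rangle$ is Morley over $A$ in $\tp(a_0b'/A)$ with $a_0y_0\equiv_A a_0b'$,'' and a compactness argument (combined with Fact~\ref{er} to guarantee full indiscernibility in the limit) yields the desired $(b_i)$.
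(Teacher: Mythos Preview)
Your iteration step has a genuine gap. After stage $0$ you have a Morley sequence $(a_0 b_0, e_0 f_0, e_1 f_1, \ldots)$ over $A$, and you propose to insert $a_1$ by reapplying Fact~\ref{seqamal}(1) to the tail and then applying an $A$-automorphism $\rho_1$. But both moves are $A$-automorphisms that need not fix $a_0 b_0$, so afterwards the leading pair becomes $(\rho_1(a_0), \rho_1(b_0))$ rather than $(a_0, b_0)$. The underlying obstacle is that Fact~\ref{seqamal}(1) only guarantees indiscernibility of $a_j I'$ for each \emph{single} $j$; there is no reason for $(a_0, a_1, c'_0, c'_1, \ldots)$ to be $A$-indiscernible, and two Morley sequences in $\tp(a_0/A)$ need not share an EM-type, so in general no $A$-automorphism carries $(c'_0, c'_1)$ to $(a_0, a_1)$. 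A secondary issue is that ``Morley over $A$'' is not type-definable (non-forking is not), so the compactness step as phrased is ill-formed; the natural repair---asking that $\langle a_i y_i\rangle$ have the same type over $A$ as a \emph{fixed} Morley sequence of pairs---would require exactly the finite alignments $(a_0 b_0, \ldots, a_n b_n) \equiv_A (c'_0 d'_0, \ldots, c'_n d'_n)$ that the broken iteration fails to deliver.

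The paper's argument avoids alignment altogether: it builds a long sequence $\langle a'_i b'_i : i < \kappa \rangle$ directly, choosing at each step $a'_i$ with $a'_{\leq i} \equiv_A a_{\leq i}$ and $a'_i \ind_A a'_{<i} b'_{<i}$ (extension), and then $b'_i$ with $a'_i b'_i \equiv_A a_0 b'$ and $b'_i \ind_{A a'_i} a'_{<i} b'_{<i}$ (existence plus extension). Left-transitivity gives $a'_i b'_i \ind_A a'_{<i} b'_{<i}$, Erd\H{o}s--Rado extracts an $A$-indiscernible (hence Morley) subsequence, and a \emph{single} automorphism at the very end realigns the first coordinates with the original $a_i$'s. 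The point is that the paper never tries to thread the given $\langle a_i\rangle$ through a pre-existing Morley sequence of pairs; it constructs the pair sequence from scratch while tracking only the finite-type condition $a'_{\leq i}\equiv_A a_{\leq i}$.
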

\begin{proof}
We may stretch $I$ to $\langle a_{i} : i < \kappa \rangle$ for a sufficiently large $\kappa$.  By induction on $i < \kappa$, we will choose $a'_{i}b'_{i}$ so that 
\begin{enumerate}
\item $a'_{i}b'_{i} \equiv_{A} a_{0}b'$ and $a'_{\leq i} \equiv_{A} a_{\leq i}$.    
\item $a'_{i} \ind_{A} a'_{<i}b'_{<i}$.
\item $b'_{i} \ind_{Aa'_{i}} a'_{<i} b'_{<i}$ for all $i < \kappa$.  
\end{enumerate}
Suppose for some $i < \kappa$, we have chosen $a'_{<i}b'_{<i}$ satisfying the above requirements.  Since $a'_{<i} \equiv_{A} a_{<i}$, there is $a'_{i}$ such that $a'_{<i} a'_{i} \equiv_{A} a_{<i} a_{i}$ and therefore $a'_{i} \ind_{A} a'_{<i}$.  By extension, we may assume $a'_{i} \ind_{A} a'_{<i} b'_{<i}$.  By existence and extension, there is $b'_{i}$ such that $a'_{i} b'_{i} \equiv_{A} a_{0} b'$ and $b'_{i} \ind_{Aa'_{i}} a'_{<i}b'_{<i}$.  The pair $a'_{i}b'_{i}$ satisfies the requirements and, by induction, completes the construction of $\langle a'_{i} b'_{i} : i < \kappa \rangle$.

Note that, by the left-transitivity of non-forking independence, conditions (2) and (3) imply that $a'_{i}b'_{i} \ind_{A} a'_{<i}b'_{<i}$ for all $i < \kappa$.  By applying Erd\H{o}s-Rado (Fact \ref{er}), we obtain an $A$-indiscernible sequence $\langle a''_{i} b''_{i} : i < \kappa \rangle$ which also satisfies (1)-(3).  By (1) and an automorphism, we can find $b_{i}$ for all $i < \kappa$ so that $\langle a''_{i} b''_{i} : i < \kappa \rangle \equiv_{A} \langle a_{i} b_{i} : i < \kappa \rangle$, which gives the desired sequence.

\end{proof}

Finally, we observe that the property of Kim-dividing depends only on the set defined by a formula, not on the formula itself.  

\begin{cor}\label{uniformity}
Assume $T$ satisfies the existence axiom.  
\begin{enumerate}
\item Kim-dividing of a formula depends only on the definable set defined by the formula.
\item If $\varphi_0(x,a)\models \varphi_1(x,b)$ and  $\varphi_1(x,b)$ Kim-divides over $A$, then so does $\varphi_0(x,a)$.
\end{enumerate}
\end{cor}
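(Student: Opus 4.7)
The plan is to first establish (2), and then derive (1) as an immediate consequence since two formulas defining the same set imply one another. The main tool is Lemma \ref{uniformitylemma}, which allows us to lift a Morley sequence in the parameter tuple of the weaker formula to a Morley sequence of pairs.

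For (2), suppose $\varphi_{0}(x,a)\models \varphi_{1}(x,b)$ and $\varphi_{1}(x,b)$ Kim-divides over $A$. Fix a Morley sequence $\langle b_{i} : i<\omega\rangle$ over $A$ with $b_{0}=b$ such that $\{\varphi_{1}(x,b_{i}) : i<\omega\}$ is inconsistent. Now apply Lemma \ref{uniformitylemma} to the Morley sequence $\langle b_{i} : i<\omega\rangle$ with the chosen parameter being $a$: we obtain tuples $a_{i}$ such that $b_{0}a\equiv_{A} b_{0}a_{0}$ and $\langle b_{i}a_{i} : i<\omega\rangle$ is Morley over $A$. In particular this latter sequence is $A$-indiscernible, so the projection $\langle a_{i} : i<\omega\rangle$ is $A$-indiscernible in $\tp(a/A)$, and since $a_{i}\subseteq b_{i}a_{i}\ind_{A} b_{<i}a_{<i}$, base monotonicity gives $a_{i}\ind_{A}a_{<i}$, so $\langle a_{i} : i<\omega\rangle$ is a Morley sequence in $\tp(a/A)$. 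By indiscernibility, since $\varphi_{0}(x,a_{0})\models \varphi_{1}(x,b_{0})$ follows from $b_{0}a_{0}\equiv_{A}ba$ and the hypothesis, we get $\varphi_{0}(x,a_{i})\models \varphi_{1}(x,b_{i})$ for every $i<\omega$. Any common realization of $\{\varphi_{0}(x,a_{i}) : i<\omega\}$ would therefore be a common realization of $\{\varphi_{1}(x,b_{i}) : i<\omega\}$, so the inconsistency of the latter forces inconsistency of the former, witnessing that $\varphi_{0}(x,a)$ Kim-divides over $A$.

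Statement (1) then follows: if $\varphi(x,a)$ and $\psi(x,b)$ define the same set then $\varphi(x,a)\models\psi(x,b)$ and $\psi(x,b)\models\varphi(x,a)$, so two applications of (2) show that each Kim-divides over $A$ if and only if the other does.

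The only nontrivial point is the initial invocation of Lemma \ref{uniformitylemma}; once that is in place, the rest is indiscernibility and left-transitivity of non-forking. In particular, no appeal to NSOP$_{1}$ is needed here, only the existence axiom (which was assumed in Lemma \ref{uniformitylemma}).
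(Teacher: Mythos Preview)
Your proof is correct and follows essentially the same route as the paper: both arguments invoke Lemma \ref{uniformitylemma} to thicken a Morley sequence in one parameter to a Morley sequence of pairs, then read off the needed inconsistency. The only organizational difference is that you prove (2) first and deduce (1), while the paper proves (1) directly and remarks that (2) follows by the same argument; your ordering is arguably cleaner since (1) is visibly a special case of (2). One terminological quibble: the step from $b_{i}a_{i}\ind_{A} b_{<i}a_{<i}$ to $a_{i}\ind_{A} a_{<i}$ is (two-sided) monotonicity, not base monotonicity.
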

\begin{proof}
 (1) The assertion is clear for Kim-forking, so we prove it only for Kim-dividing. Let $\models \varphi(x,a_0)\leftrightarrow\psi(x,b_0)$ and assume $\{ \varphi(x,a_i): i < \omega\}$ is inconsistent for some Morley sequence $I=\langle a_i : i < \omega \rangle$ over $A$. 
 By Lemma \ref{uniformitylemma}, there is a Morley sequence  $\langle b_i : i < \omega \rangle$ over $A$ such that $\varphi(x,a'_i)\leftrightarrow\psi(x,b_i)$ with some $\langle a'_i\rangle\equiv_AI$, so $\{ \psi(x,b_i)\}_i$ is inconsistent.  This shows that if $\varphi(x;a_{0})$ Kim-divides over $A$, then $\psi(x;b_{0})$ Kim-divides over $A$, and by symmetry we conclude.  (2) follows from Lemma \ref{uniformitylemma} as well, by an entirely similar argument.  
\end{proof}
Clearly, the above corollary remains true if we replace Kim-dividing by Kim-forking, even if we do not assume the existence axiom.

Finally, we note that there are many NSOP$_{1}$ theories for which our results apply.  The existence axiom has been proved explicitly in almost every NSOP$_{1}$ theory that has been studied in detail, and there is no known example of an NSOP$_{1}$ theory in which existence fails.  

\begin{fact} \label{examples}
The following theories are NSOP$_{1}$ and satisfy the existence axiom:
\begin{enumerate}
\item Any complete theory of $\omega$-free PAC fields. \cite[Proposition 3.1]{chatzidakis2002properties} \cite[Corollary 6.2]{ArtemNick}
\item The theory $T_{m,n}$ of existentially closed incidence structures omitting the complete
incidence structure $K_{m,n}$.  \cite[Theorem 4.11, Corollary 4.24]{conant2019independence}
\item The theory $T^{*}_{feq}$ of parameterized equivalence relations.  \cite[Corollary 6.4]{ArtemNick}
\item For any language $L$, the model completion of the empty theory $T^{\emptyset}_{L}$. \cite[Corollary 3.13, Proposition 3.17]{kruckman2018generic}
\item The theory ACFG of an algebraically closed field of characteristic $p$ with a predicate naming a generic multiplicative subgroup. \cite[Theorem 5.9]{d2018generic} \cite[Corollary 3.13]{d2018forking}
\end{enumerate}
\end{fact}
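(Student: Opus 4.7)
The plan is simply to verify each item of the list by consulting the cited sources; since Fact \ref{examples} is a compilation rather than a new result, the ``proof'' is essentially a bookkeeping exercise. In every case the strategy in the cited literature follows a common template: one identifies a concrete independence relation $\ind^{*}$ on the theory (defined combinatorially or algebraically, and available \emph{over arbitrary sets}), verifies that $\ind^{*}$ satisfies the hypotheses of the Chernikov--Ramsey independence criterion \cite{ArtemNick}, and concludes that the theory is NSOP$_1$. Existence is then extracted from the observation that $\ind^{*}$ implies non-forking and has full existence on its own, so every complete type admits a global non-forking extension.

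Concretely, for item (1) one reads off existence from Chatzidakis's Galois-theoretic description of forking in $\omega$-free PAC fields \cite{chatzidakis2002properties}, and NSOP$_1$ from the independence theorem proved there together with \cite{ArtemNick}. For (2) the Conant--Kruckman paper \cite{conant2019independence} describes non-forking in $T_{m,n}$ combinatorially via free amalgamation of incidence structures, from which both properties follow directly. For (3), the parametrized equivalence relations $T^{*}_{feq}$ are treated in \cite{ArtemNick} by exhibiting an explicit independence relation over arbitrary sets. For (4), Kruckman--Ramsey \cite{kruckman2018generic} establish that the model completion of the empty $L$-theory admits free amalgamation, which immediately gives NSOP$_1$ and existence. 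For (5), the analysis of ACFG in \cite{d2018generic,d2018forking} identifies non-forking with a concrete relation built out of algebraic independence plus genericity of the predicate, again yielding existence at once.

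The only potential obstacle is that in some of the cited papers the existence axiom is not stated in these exact words. For those entries, the main step is to extract from the concrete description of $\ind^{*}$ that it is defined on all parameter sets, is symmetric, and has the extension property; one then combines this with the inclusion $\ind^{*}\subseteq \ind$ to conclude that every type over a set has a non-forking extension, which is the content of existence. No genuinely new argument is needed beyond carefully matching the definitions in each source with the formulation of the existence axiom used in this paper.
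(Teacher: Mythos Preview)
Your proposal is broadly correct in spirit: this Fact is indeed a compilation, and the paper treats it as such, giving no proof beyond the citations. There is, however, one substantive point of divergence concerning item (3).

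You assert that existence for $T^{*}_{feq}$ is handled in \cite{ArtemNick} via an explicit independence relation over arbitrary sets. The paper explicitly disagrees: immediately after the Fact, it remarks that existence for $T^{*}_{feq}$ \emph{does not appear explicitly in the literature}, and it supplies its own short argument. That argument does not go through an abstract $\ind^{*}\subseteq\ind$ comparison as you suggest; instead, for an arbitrary type $q\in S(A)$ it writes down, by hand, a global completion (specifying exactly which $E_{p}$-classes the new $O$-points avoid, which equalities the new $P$-points avoid, and which $E_{y_j}$-relations fail) and checks that this completion is $A$-invariant, hence non-forking over $A$. So for (3) the paper's route is a direct construction of an invariant extension rather than the ``$\ind^{*}$ has full existence and refines $\ind$'' template you describe. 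Your template would also work here, but you would need to actually verify it for $T^{*}_{feq}$ rather than cite \cite{ArtemNick}.

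For the remaining items your description matches the paper's: nothing is proved, and the reader is simply referred to the cited sources.
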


\begin{rem}
Although it does not appear explicitly in the literature, we sketch how existence may be shown for $T^{*}_{feq}$.  Recall that the language contains two unary relations $P$ and $O$, and a ternary relation $E_{x}(y,z)$.  The theory $T_{feq}$ asserts that $P$ and $O$ are disjoint, $E_{x}(y,z)$ implies $x \in P$ and $(y,z) \in O^{2}$, and, for every $p \in P$, $E_{p}(y,z)$ is an equivalence relation on $O$. Given $p \in P$ and $c \in O$, we write $[c]_{E_{p}}$ for the $E_{p}$-class of $c$.  The model completion $T^{*}_{feq}$ is $\aleph_{0}$-categorical, with trivial algebraic closure and elimination of quantifiers, see \cite[Subsection 6.3]{ArtemNick} for further details.

Suppose $\mathbb{M}$ is a monster model of $T^{*}_{feq}$, $A \subseteq \mathbb{M}$ is a small set of parameters, and $q \in S(A)$.  We can write $q = q(x_{0},\ldots, x_{n-1},y_{0},\ldots, y_{m-1})$ where $q \vdash x_{i} \in O$ and $q \vdash y_{j} \in P$ for all $i < n,j < m$.  We, moreover, may easily reduce to the case that $q$ implies no equality between an $x_{i}$ or $y_{j}$ and an element of $A$.  Define a global type extending $q$ with the following formulas:
\begin{itemize}
\item For all $i < n$, $p \in P(\mathbb{M}) \setminus A$, $c \in O(\mathbb{M})$, we add $\neg E_{p}(x_{i},c)$.
\item For all $i < n$, $p \in P(A)$, and $c \in \mathbb{M}$ such that $[c]_{E_{p}} \cap A = \emptyset$, we put $\neg E_{p}(x_{i},c)$.  
\item For all $j < m$ and $c \in \mathbb{M}$, we add $y_{j} \neq c$.
\item For all $j < m$, $c \in \mathbb{M}$, and $c' \in \mathbb{M} \setminus A$, we add $\neg E_{y_{j}}(c,c')$, unless $c = c'$.
\end{itemize}
It is easy to check that this implies a complete global type and is moreover $A$-invariant, hence does not fork over $A$.  As $q$ and $A$ are arbitrary, this shows existence for $T^{*}_{feq}$.
\end{rem}

\subsection{Transfer to models}

In this subsection, we explain how, from a Morley sequence over $A$, one can find a model $M$ containing $A$ such that the sequence is finitely satisfiable in $M$ and $M$
 is independent from the sequence over $A$.  This will be a key step in reducing Kim's lemma for Kim-dividing over arbitrary sets 
to the known version of Kim's lemma for Kim-dividing over models.  Our proof makes use of notions related to the fundamental order from classical stability theory (see \cite{Lascar79}).  
\textbf{Throughout this subsection we assume the existence axiom.}

For $A\subseteq M$ and finite $a$, put
$$\mathrm{Cl}_A(a/M):=\{ \varphi(x,y)\in \mathcal{L}(A)|\  \varphi(x,m)\in \mathrm{tp}(a/M)\mbox{ for some }m\in M\}.$$

\begin{lem}\label{heirbase} Let $I=\langle a_i|\ i<\omega\rangle$ be an $A$-Morley sequence. Then there is a model $M\supseteq A$ such that 
$M\ind_A I$,  $I$ is
 $M$-indiscernible, and  $\mathrm{tp}(a_{< k}/Ma_{\geq k})$ is an heir extension of $\mathrm{tp}(a_{< k}/M)$ for any $  k<\omega$ (so $I$ is a coheir sequence over $M$). 
\end{lem}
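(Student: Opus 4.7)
My plan is to construct $M$ via a stabilization argument on the monotone invariant $\mathrm{Cl}_A$ along a long $A$-Morley extension of $I$, with the three required properties extracted from the stabilization point by a reflection argument.

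First, using the existence axiom I would extend $I$ to a long $A$-Morley sequence $\langle a_i : i < \lambda\rangle$, with $I$ placed as an initial, terminal, or otherwise convenient block and $\lambda$ a regular cardinal larger than $2^{|T|+|A|}$; by iterated use of Fact \ref{seqamal}(2) this yields the requisite nonforking independence between $I$ and the ``other part'' of the extended sequence. Using extension and a variant of Lemma \ref{uniformitylemma} (which lets me pair a Morley sequence with a chosen tuple $b'$), I would then build an elementary chain $\langle M_\alpha : \alpha \leq \lambda\rangle$ of small models, where $M_\alpha$ contains $A$ together with the Morley-sequence elements outside $I$ up to level $\alpha$, and at each stage the invariants $M_\alpha \ind_A I$ and ``$I$ is $M_\alpha$-indiscernible'' are preserved. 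The pair-Morley structure from Lemma \ref{uniformitylemma} is what keeps $I$ indiscernible over the Skolem-completed model rather than just over the sequence parameters.

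Second, for each finite tuple $\bar b$ drawn from $I$ the map $\alpha \mapsto \mathrm{Cl}_A(\bar b/M_\alpha)$ is monotone non-decreasing in $\alpha$, and bounded since it takes values among subsets of $\mathcal{L}(A)$-formulas (of cardinality at most $2^{|T|+|A|}$). By the cofinality of $\lambda$, coupled with a standard pigeonhole applied simultaneously to countably many finite subtuples of $I$, there is some $\alpha_0 < \lambda$ at which every relevant closure has stabilized. I would then take $M := M_{\alpha_0}$.

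Third, verifying the conclusions: (i) $M \supseteq A$ and $M \ind_A I$ follow from the chain construction; (ii) $M$-indiscernibility of $I$ comes from the pair-Morley construction and would be checked by pushing any hypothetical distinguishing formula back through the chain to an $\mathcal{L}(A)$-formula that distinguishes tuples inside the long $A$-indiscernible sequence; (iii) for the heir condition, given an $\mathcal{L}(M)$-formula $\varphi(\bar x, \bar y) = \psi(\bar x, \bar y, \bar e)$ with $\bar e \in M$, $\psi \in \mathcal{L}(A)$, and $\models \varphi(a_{<k}, a_{\geq k})$, I would use $A$-indiscernibility of the long sequence to replace $a_{\geq k}$ by a tuple sitting inside some $M_\beta$ with $\beta > \alpha_0$, and then invoke the stabilization of $\mathrm{Cl}_A$ to pull the resulting witness back into $M = M_{\alpha_0}$.

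The main obstacle is (iii): reconciling $A$-indiscernibility of the stretched Morley sequence with the stabilization of $\mathrm{Cl}_A$ so that a formula satisfied by $a_{<k}$ over the tail $a_{\geq k}$ can be mirrored by one satisfied over parameters from $M$ itself, with $\bar e \in M$ fixed rather than conjugated away. This is precisely the ``fundamental order'' style reflection referenced in the subsection's introduction, and is where classical stability techniques enter the NSOP$_1$ story. A secondary technical difficulty is maintaining the invariant $M_\alpha \ind_A I$ along the elementary chain: without general symmetry of nonforking outside simple theories, preserving this through successor steps forces careful application of extension and left-transitivity, which is where Lemma \ref{uniformitylemma} does most of the work.
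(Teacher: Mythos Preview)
Your outline has the right flavor\textemdash stabilizing a $\mathrm{Cl}$-invariant is indeed the engine of the paper's proof\textemdash but as written it contains a genuine gap at exactly the point you flag as ``the main obstacle.'' Stabilization of $\mathrm{Cl}_A(\bar b/M_\alpha)$ only tells you that an $\mathcal{L}(A)$-formula $\psi(\bar x,\bar y,\bar z)$ realized by $a_{<k}$ with parameters in some $M_\beta$ is already realized with parameters in $M_{\alpha_0}$; it does not let you hold the tuple $\bar e\in M_{\alpha_0}$ fixed. Concretely: from $\models\psi(a_{<k},a_k,\ldots,a_l,\bar e)$ your $A$-indiscernibility step produces $\models\psi(a_{<k},\bar c',\sigma(\bar e))$ with $\sigma$ an $A$-automorphism, and even if you arrange $\bar c'\in M_\beta$, the $\mathrm{Cl}_A$-pullback yields only some $\bar m,\bar e''\in M_{\alpha_0}$ with $\models\psi(a_{<k},\bar m,\bar e'')$. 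That is strictly weaker than the heir condition, which requires a witness for $\psi(\bar x,\bar y,\bar e)$ itself.

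The paper resolves this not by a single stabilization but by an $\omega$-length iteration of the whole argument: one first takes $M_0$ maximal for $\mathrm{Cl}_A$ (via Zorn, rather than a long-sequence stabilization), which gives the heir property only for $\mathcal{L}(A)$-formulas; then one repeats the construction with $\mathrm{Cl}_{M_0}$ in place of $\mathrm{Cl}_A$ to get $M_1\succ M_0$, then with $\mathrm{Cl}_{M_1}$ to get $M_2$, and so on. The union $M=\bigcup_n M_n$ works because any parameter $\bar e\in M$ lies in some $M_n$, and the $\mathrm{Cl}_{M_n}$-maximality of $M_{n+1}$ handles it. The other missing ingredient is how one produces, at each stage, a strictly larger model $M'$ (containing copies of the tail of $I$) still satisfying $M'\ind_A I$ and $I$ $M'$-indiscernible: the paper does this by stretching $I$ to $II'$ Morley over $A$, using left-transitivity and Fact~\ref{seqamal}(2) to get $M_0I'\ind_A I$, then applying existence and Ramsey to enlarge to a model. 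Lemma~\ref{uniformitylemma} is not what drives this step.
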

\begin{proof} 
Consider the following class of models
$$\mathcal{U}_0:=\{N \prec \mathbb{M} | \ A\subseteq N, \mbox{ }|N| < \overline{\kappa}, \mbox{ $I$ is $N$-indiscernible, and $N\ind_A I$}\},$$
which is ordered as follows: $N_1< N_2$ if $N_1\prec N_2$ and $\mathrm{Cl}_A(I/N_1)$ (with object variables $x_0,x_1,\ldots$ corresponding to $a_0,a_1,\ldots$) is a proper subset of  $\mathrm{Cl}_A(I/N_2).$
Due to the existence axiom, there is a model $M'\supseteq A$ such that $M'\ind_A I$. Then, by Ramsey's theorem and compactness, there is $I'\equiv_A I$
 such that $I'$ is $M'$-indiscernible and $M'\ind_A I'$:  by Ramsey and compactness, from $I$, we extract an $M'$-indiscernible sequence $I' = \langle a'_{i} : i < \omega \rangle$.  Since $I$ was already $A$-indiscernible, we have $I' \equiv_{A} I$.  If $M' \nind_{A} I'$, then there are $i_{0} < \ldots < i_{n-1}$ and $\varphi(x;a'_{i_{0}},\ldots, a'_{i_{n-1}}) \in \text{tp}(M'/AI')$ which forks over $A$.  But then, as $I'$ was extracted from $I$, there are $j_{0} < \ldots < j_{n-1}$ so that $\varphi(x;a_{j_{0}},\ldots, a_{j_{n-1}}) \in \text{tp}(M'/AI)$.  Since $a'_{i_{0}}\ldots a'_{i_{n-1}} \equiv_{A} a_{j_{0}}\ldots a_{j_{n-1}}$, it follows that $\varphi(x;a_{j_{0}},\ldots, a_{j_{n-1}})$ also forks over $A$, contradicting $M' \ind_{A} I$.  This shows $M' \ind_{A} I'$.  
 
Hence by an $A$-automorphism sending $I'$ to $I$, we see that $\mathcal{U}_0$ is non-empty.  Moreover, any chain in $\mathcal{U}_{0}$ has length at most $|\mathcal{L}(A)|$ so its union is also in $\mathcal{U}_{0}$.  Therefore, by Zorn's Lemma there is a maximal element in $\mathcal{U}_0$, say $M_0$. 

\medskip

\noindent\textbf{Claim.} For any $k<l<\omega$, an $\mathcal{L}(A)$-formula $\varphi(\bar x, y,\bar z)$ and  $m\in M_0$ such that
 $\models \varphi(a_{<k},m,a_k\dots a_l)$, 
there is  $m'\bar m''\in M_0$ such that $\varphi (a_{<k},m',\bar m'')$ holds.

\medskip

\noindent\emph{Proof of Claim.}  By compactness, clearly there is
 $I'$ such that $II'$ is $M_0$-indiscernible and $A$-Morley, and $M_0\ind_AII'$. We can further  assume the length of $I$ is sufficiently large. Note that $M_0\ind_{AI'}I $, and,  by Fact \ref{seqamal}, $I'\ind_AI$. Thus, by left-transitivity, $M_0I'\ind_A I$ holds. Now, again by existence, 
there is  $M'_0\supseteq M_0I'$ such that
$M'_0\ind_{M_0I'}I$. Then since $I$ is $M_0I'$-indiscernible, we can, as above, apply Ramsey and compactness to assume $I$ (with its original length $\omega$) is $M'_0$-indiscernible.  Due to left-transitivity we have $M'_0\ind_A I$, so by the maximality of $M_0$, we have $\mathrm{Cl}_A(I/M_0)=\mathrm{Cl}_A(I/M'_0)$. Moreover, there are $a'_k\ldots a'_l\in I'$ such that 
$a_{<k}a_k\ldots a_l\equiv_{M_0} a_{<k}a'_k\ldots a'_l$. Then  $\models \varphi(a_{<k},m,a'_k\dots a'_l)$, so 
$\varphi(\bar x; y,\bar z)\in \mathrm{Cl}_A(I/M'_0)=\mathrm{Cl}_A(I/M_0)$, which gives the claim. \qed

\medskip

As we considered formulas over $A$ (not over $M_0$), $M_0$ is not yet a desired model. Now we iterate this argument to 
obtain an elementary chain
 $\{M_i|\ i<\omega\}$ such that $M_{n+1}$ is a maximal element in 
 $$\mathcal{U}_{n+1}:=\{N| \ M_n\prec N, \mbox{ and $I$ is $N$-indiscernible, and $N\ind_A I$}\},$$
ordered by: $N_1< N_2$ if $N_1\prec N_2$ and $\mathrm{Cl}_{M_n}(I/N_1)\subsetneq \mathrm{Cl}_{M_n}(I/N_2).$  Then by the same argument as above, the same Claim (except 
$\varphi$ is  assumed to be an $\CL(M_n)$-formula, $m\in M_{n+1}$ and $m'\bar m''\in M_{n+1}$)  holds. Therefore we put 
$M:=\bigcup_{n<\omega} M_n$, which is a desired model.
\end{proof}
 
By  choosing a maximal element in $\mathcal{U}_0$ in  the proof of Lemma \ref{heirbase} suitably,  indeed we have proved the following.  
 
\begin{lem}\label{heirbase2} Let $I$ be an $A$-Morley sequence.  Then there is a model $M \supseteq A$ such that $M \ind_{A} I$ and $I$ is a coheir sequence over $M$.  Moreover, if we are given $M_0\supseteq A $ such that $M_0\ind_A I$ and  $I$ is $M_0$-indiscernible, then there is a model $M\succ M_0$ such that  $M\ind_A I$ and  $I$ is a coheir sequence over $M$.
\end{lem}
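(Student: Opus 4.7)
The plan is to observe that Lemma \ref{heirbase} already delivers the opening assertion, and then to revisit its proof to extract the `moreover' refinement. For the opening claim, the conclusion of Lemma \ref{heirbase} explicitly asserts that $I$ is a coheir sequence over the model it produces: the statement that $\tp(a_{<k}/Ma_{\geq k})$ is an heir extension of $\tp(a_{<k}/M)$ for every $k$ is, by definition of coheir in terms of $\ind^u$, equivalent to $I$ being a coheir sequence over $M$. So nothing new is required here.

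For the moreover clause, I would repeat the proof of Lemma \ref{heirbase} with a single adjustment: rather than use existence together with Ramsey and compactness to manufacture \emph{some} starting model in $\mathcal{U}_0$, I would take the prescribed $M_0$ as the starting point. Concretely, replace the class $\mathcal{U}_0$ appearing in that proof by
\[
\mathcal{U}_0' := \{N \prec \mathbb{M} \mid M_0 \prec N,\ |N|<\overline{\kappa},\ I \text{ is } N\text{-indiscernible},\ N \ind_A I\},
\]
with the same ordering. By the hypotheses on $M_0$, it lies in $\mathcal{U}_0'$, so this class is nonempty, and the argument that every chain has its union in the class goes through unchanged (indiscernibility is a formula-by-formula condition preserved under union, and $N \ind_A I$ passes to unions by finite character of non-forking). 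Zorn's Lemma then produces a maximal element above $M_0$, after which the $\mathrm{Cl}_{M_n}$-closure iteration from the original proof goes through verbatim, yielding an elementary chain $M_0 \prec M_1 \prec \cdots$ whose union $M$ satisfies $M \succ M_0$, $M \ind_A I$, and the heir property that identifies $I$ as a coheir sequence over $M$.

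The only step requiring care is verifying that the Claim from the proof of Lemma \ref{heirbase} still applies when the base of the chain is constrained to extend the prescribed $M_0$; but this is immediate, because that argument used only the maximality of $M_n$ in its $\mathcal{U}_n$-class and the fact that $M_n \ind_A I$, never the manner in which the chain was initialized. I therefore expect no substantive obstacle—the proof is genuinely a bookkeeping refinement of the earlier one, and the justification in the paragraph immediately preceding the statement (``choosing a maximal element \ldots\ suitably'') signals exactly this.
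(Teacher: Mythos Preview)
Your proposal is correct and matches the paper's own justification: the paper simply remarks that by choosing the maximal element in $\mathcal{U}_0$ ``suitably'' (i.e., above the given $M_0$, which already lies in $\mathcal{U}_0$ by hypothesis) the proof of Lemma~\ref{heirbase} yields the moreover clause. Your explicit restriction to $\mathcal{U}_0'$ is exactly this choice made precise, and your observation that the Claim only uses maximality within the relevant class (so that the constructed $M'_0$ automatically lies in $\mathcal{U}_0'$) is the one point worth checking.
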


\section{Kim's lemma over arbitrary sets}\label{section_Klemma}
{\bf Until the end of Section \ref{sec_symmetry} we assume $T$ has NSOP$_1$ and satisfies existence for forking independence}.

For an ordinal $\alpha$, let the language $L_{s,\alpha}$ be $\langle \unlhd, \wedge, <_{lex}, (P_{\beta})_{\beta \leq \alpha} \rangle$.  We may view a tree with $\alpha$ levels as an $L_{s,\alpha}$-structure by interpreting $\unlhd$ as the tree partial order, $\wedge$ as the binary meet function, $<_{lex}$ as the lexicographic order, and $P_{\beta}$ interpreted to define level $\beta$. 

Recall the modeling property:
\begin{defn}  Suppose $I$ is an $L'$-structure, where $L'$ is some language. 
\begin{enumerate}
\item  We say $(a_{i} : i \in I)$ is a set of $I$\emph{-indexed indiscernibles over $A$} if whenever 

$(s_{0}, \ldots, s_{n-1})$, $(t_{0}, \ldots, t_{n-1})$ are tuples from $I$ with 
$$
\text{qftp}_{L'}(s_{0}, \ldots, s_{n-1}) = \text{qftp}_{L'}(t_{0}, \ldots, t_{n-1}),
$$
then we have
$$
\text{tp}(a_{s_{0}},\ldots, a_{s_{n-1}}/A) = \text{tp}(a_{t_{0}},\ldots, a_{t_{n-1}}/A).
$$
\item In the case that $L' = L_{s,\alpha}$ for some $\alpha$, we say that an $I$-indexed indiscernible is $\emph{s-indiscernible}$.  As the only $L_{s,\alpha}$-structures we will consider will be trees, we will often refer $I$-indexed indiscernibles in this case as \emph{s-indiscernible trees}.  
\item We say that $I$-indexed indiscernibles have the \emph{modeling property} if, given any $(a_{i} : i \in I)$ from $\mathbb{M}$ and any $A$, there is an \(I\)-indexed indiscernible \((b_{i} : i \in I)\) over $A$ in $\mathbb{M}$ \emph{locally based} on \((a_{i} : i \in I)$ over $A$ -- i.e., given any finite set of formulas \(\Delta\) from $\CL(A)$ and a finite tuple \((t_{0}, \ldots, t_{n-1})\) from \(I\), there is a tuple \((s_{0}, \ldots, s_{n-1})\) from \(I\) so that 
\[
\text{qftp}_{L'} (t_{0}, \ldots, t_{n-1}) =\text{qftp}_{L'}(s_{0}, \ldots , s_{n-1})
\]
and also 
\[
\text{tp}_{\Delta}(b_{t_{0}}, \ldots, b_{t_{n-1}}) = \text{tp}_{\Delta}(a_{s_{0}}, \ldots, a_{s_{n-1}}).
\]
\end{enumerate}
\end{defn}

\begin{fact}\cite[Theorem 4.3]{KimKimScow}\label{modeling}
Let  \(I_{s}\) denote the \(L_{s,\omega}\)-structure \((\omega^{<\omega}, \unlhd, <_{lex}, \wedge, (P_{\alpha})_{\alpha < \omega})\) with all symbols being given their intended interpretations and each \(P_{\alpha}\) naming the elements of the tree at level \(\alpha\).  Then \(I_{s}\)-indexed indiscernibles have the modeling property.  
\end{fact}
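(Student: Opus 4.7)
\medskip

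\noindent\textbf{Proof proposal for Fact \ref{modeling}.}
The plan is to follow the general Scow paradigm relating the modeling property for $\mathcal{K}$-indexed indiscernibles to the structural Ramsey property of the age $\mathcal{K}$. In our setting, $\mathcal{K}$ is the class of finite $L_{s,\omega}$-structures embeddable into $I_s$, i.e.\ finite subsets of $\omega^{<\omega}$ equipped with the induced $\unlhd$, $\wedge$, $<_{lex}$, and level predicates $P_\alpha$. First I would check that $I_s$ is (essentially) a Fra\"iss\'e limit of $\mathcal{K}$, so that every finite quantifier-free type realized in $I_s$ is represented by some $F \in \mathcal{K}$; this is a routine verification.

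Next I would establish the Ramsey property for $\mathcal{K}$: for every $F, G \in \mathcal{K}$ and every $k$, every $k$-coloring of the copies of $F$ inside a sufficiently large member of $\mathcal{K}$ admits a copy of $G$ whose $F$-copies are monochromatic. The combinatorial heart here is Milliken's theorem on strong subtrees of $\omega^{<\omega}$, which provides monochromatic subtrees of $I_s$ itself for any finite coloring of the copies of a given finite configuration. A small amount of care is needed to ensure that ``copies of $F$'' in the Milliken sense matches ``copies of $F$'' in the $L_{s,\omega}$ sense, i.e.\ that the induced levels, meets, and lex order are preserved; this is standard since Milliken's strong subtrees respect all of these by design.

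Given the Ramsey property, I would extract the $s$-indiscernible tree by a standard compactness argument. Enumerate the pairs $(\Delta, q)$ where $\Delta \subseteq \CL(A)$ is finite and $q$ is a quantifier-free $L_{s,\omega}$-type of a finite tuple. For each such pair, Ramsey inside the index set $\omega^{<\omega}$ applied to the coloring ``$\Delta$-type of $(a_{i_0}, \ldots, a_{i_{n-1}})$ in $\mathbb{M}$'' yields a subtree $T_{\Delta,q} \subseteq \omega^{<\omega}$ isomorphic to $I_s$ on which the $\Delta$-type of every tuple realizing $q$ is constant. Combining finitely many such restrictions, one produces, for each finite fragment of the desired indiscernibility schema, a realization inside $(a_i)$; compactness then yields $(b_i)_{i \in I_s}$ that is $s$-indiscernible and locally based on $(a_i)$ over $A$, as required.

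I expect the main obstacle to be the Ramsey-theoretic ingredient itself: Milliken's theorem is a deep combinatorial result (roughly on the level of Hales--Jewett, which indeed appears in its proof), and checking carefully that its notion of strong subtree aligns with the $L_{s,\omega}$-structure on $\omega^{<\omega}$ takes some bookkeeping. Once that is in place, the passage from structural Ramsey to the modeling property is essentially formal and parallels the arguments given by Scow for ordered graphs and other Fra\"iss\'e classes.
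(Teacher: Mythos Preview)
Your sketch is essentially correct and follows the standard route taken in the cited reference \cite{KimKimScow}: one shows that the age of $I_s$ is a Ramsey class (via Milliken's theorem on strong subtrees, after checking that strong-subtree embeddings coincide with $L_{s,\omega}$-embeddings), and then invokes Scow's criterion that the Ramsey property for the age implies the modeling property. Note, however, that the present paper does not prove Fact~\ref{modeling} at all; it is quoted as a black box from \cite{KimKimScow}, so there is nothing here to compare your argument against beyond the original source.
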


\begin{rem}
It follows by compactness that for any cardinal $\kappa$, if $(a_{\eta})_{\eta \in \omega^{<\kappa}}$ is a collection of tuples, then there is an $s$-indiscernible tree $(b_{\eta})_{\eta \in \omega^{<\kappa}}$ locally based on $(a_{\eta})_{\eta \in \omega^{<\kappa}}$.  For arbitrary $\kappa$, one considers the partial type $\Gamma(x_{\eta} : \eta \in \omega^{<\kappa})$ consisting of formulas naturally asserting that $(x_{\eta})_{\eta \in \omega^{<\kappa}}$ is $s$-indiscernible, together with every formula of the form $\varphi(\overline{x}_{\overline{\eta}})$ where $\models \varphi(\overline{a}_{\overline{\nu}})$ for all tuples $\overline{\nu}$ from $\omega^{<\kappa}$ realizing $\mathrm{qftp}_{L_{s,\kappa}}(\overline{\eta})$.  Fact \ref{modeling} may be used to show any finite subset is satisfiable and a realization will be the desired $s$-indiscernible tree. 
\end{rem}

\begin{lem} \label{building tree}
Suppose $A$ is a set of parameters, $I = \langle a_{i} : i < \omega \rangle$ and $J = \langle b_{i} : i < \omega\rangle$ are $A$-indiscernible sequences with $a_{0} = b_{0}$ and $b_{>0} \ind^{d}_{A} b_{0}$.  Then there is a tree $(c_{\eta})_{\eta \in \omega^{<\omega}}$ satisfying the following properties:
\begin{enumerate}
\item For all $\eta \in \omega^{<\omega}$, $(c_{\eta \frown \langle i \rangle})_{i < \omega} \equiv_{A} I$.  
\item For all $\eta \in \omega^{<\omega}$, $(c_{\eta},c_{\eta | l(\eta) - 1}, \ldots, c_{\emptyset}) \equiv_{A} (b_{0},b_{1},\ldots, b_{l(\eta)})$.  
\item $(c_{\eta})_{\eta \in \omega^{<\omega}}$ is $s$-indiscernible over $A$. 
\end{enumerate}
\end{lem}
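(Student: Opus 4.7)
The plan is to build an approximate tree $(c'_\eta)_{\eta \in \omega^{<\omega}}$ satisfying (1) and (2) by induction on depth, and then to extract the required $s$-indiscernible tree via the modeling property. The starting point is a useful consequence of the hypothesis: since $b_{>0} \ind^d_A b_0$ means $\text{tp}(b_{>0}/Ab_0)$ does not divide over $A$, applying this to the $A$-indiscernible sequence $I$, which starts with $a_0 = b_0$, yields (by the standard compactness argument) a tuple $\bar\beta = (\beta_k)_{k \geq 1}$ with $(a_i,\bar\beta) \equiv_A (b_0,b_{>0}) = J$ for every $i<\omega$. In particular, for each $n$, $(\beta_1,\ldots,\beta_{n+1}) \equiv_A (b_1,\ldots,b_{n+1}) \equiv_A (b_0,\ldots,b_n)$, by $A$-indiscernibility of $J$.

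Next I would build $(c'_\eta)_{\eta \in \omega^{<\omega}}$ by induction on $l(\eta)$. Set $c'_\emptyset := b_0$. Suppose $(c'_\eta)_{l(\eta)\le n}$ has been chosen so that (1) and (2) hold as far as possible. Fix $\nu$ with $l(\nu)=n$ and consider the ancestor chain $\bar c^\nu := (c'_\nu,c'_{\nu|n-1},\ldots,c'_\emptyset)$; by (2), $\bar c^\nu \equiv_A (b_0,\ldots,b_n) \equiv_A (\beta_1,\ldots,\beta_{n+1})$, so there is an $A$-automorphism $\rho^\nu$ with $\rho^\nu(\beta_k) = c'_{\nu|n+1-k}$ for $k=1,\ldots,n+1$. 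Define $c'_{\nu \frown \langle i \rangle} := \rho^\nu(a_i)$ for all $i<\omega$. Then $(c'_{\nu \frown \langle i \rangle})_{i<\omega} \equiv_A I$ because $\rho^\nu$ is an $A$-automorphism and $I$ is $A$-indiscernible, which secures (1) at $\nu$. Applying $\rho^\nu$ to the identity $(a_i,\beta_1,\ldots,\beta_{n+1}) \equiv_A (b_0,\ldots,b_{n+1})$ gives $(c'_{\nu \frown \langle i \rangle}, \bar c^\nu) \equiv_A (b_0,\ldots,b_{n+1})$, which secures (2) at each new node $\nu \frown \langle i \rangle$.

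Finally, I would invoke the modeling property from Fact \ref{modeling} (extended to $\omega^{<\omega}$ by the remark after it) to obtain an $s$-indiscernible tree $(c_\eta)_{\eta \in \omega^{<\omega}}$ over $A$ locally based on $(c'_\eta)_{\eta \in \omega^{<\omega}}$. Conditions (1) and (2) concern only the $A$-type of finite tuples whose indices have a specific $L_{s,\omega}$-quantifier-free type, namely a sibling antichain under a common parent (for (1)) or a $\unlhd$-descending chain from a node to the root (for (2)). Therefore local basedness, applied one formula at a time and using the $s$-indiscernibility of $(c_\eta)$ to identify sibling subtuples with the initial segment $(a_0,\ldots,a_{k-1})$, transfers both properties from $(c'_\eta)$ to $(c_\eta)$.

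The main obstacle will be the inductive step in the construction of the approximate tree: at each new level, the children of every current leaf $\nu$ must simultaneously form an $I$-type $A$-indiscernible sequence and each satisfy the prescribed joint $A$-type with the entire pre-existing chain $\bar c^\nu$, which grows in length with the depth. The uniform way around this is to use the single tuple $\bar\beta$ produced by non-dividing of $\text{tp}(b_{>0}/Ab_0)$ along $I$ as a universal template: any node $\nu$ draws its children from the copy of $I$ supported by $\bar\beta$ and pulls everything back via the automorphism $\rho^\nu$ that aligns $\bar\beta$ with $\bar c^\nu$. No coordination between different nodes is needed at this stage because cross-parent relationships are unconstrained in (1) and (2) and will be supplied afterwards by the modeling property.
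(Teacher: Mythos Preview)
Your proof is correct and follows essentially the same strategy as the paper's: build the tree level by level so that (1) and (2) hold, using the non-dividing hypothesis together with automorphisms to populate each new level, and then extract $s$-indiscernibility via the modeling property. The only difference is organizational: you apply non-dividing once up front to obtain a single infinite template $\bar\beta$ over which $I$ is indiscernible and then pull everything back through $\rho^\nu$ at each node, whereas the paper re-applies non-dividing at each level to the finite ancestor chain $(c_{0^n},\ldots,c_\emptyset)$ before transporting to the remaining nodes by automorphisms $\sigma_\eta$.
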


\begin{proof}
By induction on $n$, we will construct a tree $(c_{\eta})_{\eta \in \omega^{\leq n}}$ so that 
\begin{enumerate}
\item For all $\eta \in \omega^{<n}$, $(c_{\eta \frown \langle i \rangle})_{i < \omega} \equiv_{A} I$.
\item For all $\eta \in \omega^{n}$, $(c_{\eta}, c_{\eta | (n-1)}, \ldots, c_{\eta |0}) \equiv_{A} (b_{0},\ldots, b_{n})$.  
\end{enumerate}
For $n = 0$, we may set $c_{\emptyset} = b_{0}$, which trivially satisfies the requirements.  Now suppose we are given $(c_{\eta})_{\eta \in \omega^{\leq n}}$.  By the indiscernibility of $J$ and (2), we have $(c_{0^{n}},c_{0^{n-1}},\ldots, c_{\emptyset}) \equiv_{A} (b_{1},\ldots, b_{n+1})$ so we may choose $c_{*}$ so that $c_{*}(c_{0^{n}},c_{0^{n-1}},\ldots, c_{\emptyset}) \equiv_{A} b_{0}(b_{1},\ldots, b_{n+1})$.  By invariance and $b_{>0} \ind^{d}_{A} b_{0}$, we have $c_{0^{n}}c_{0^{n-1}}\ldots c_{\emptyset} \ind^{d}_{A} c_{*}$.  Let $I_{*} = \langle c_{*,i} : i < \omega\rangle$ be a sequence with $I_{*} \equiv_{A} I$ and $c_{*,0} = c_{*}$.  Since $c_{0^{n}}c_{0^{n-1}}\ldots c_{\emptyset} \ind^{d}_{A} c_{*}$, we may assume $I_{*}$ is $Ac_{0^{n}}c_{0^{n-1}}\ldots c_{\emptyset}$-indiscernible and, therefore, that for all $i < \omega$, 
$$
c_{*,i} c_{0^{n}}\ldots c_{\emptyset} \equiv_{A} b_{0} b_{1} \ldots b_{n+1}.  
$$

Now we define a tree $(c_{\eta})_{\eta \in \omega^{\leq n+1}}$ by placing a copy of $I$ on top of each node of level $n$ in $(c_{\eta})_{\eta \in \omega^{\leq n}}$.  More precisely, using (2), we may choose, for each $\eta \in \omega^{n}$, an automorphism $\sigma_{\eta} \in \text{Aut}(\mathbb{M}/A)$ so that $\sigma_{\eta}(c_{0^{n}} c_{0^{n-1}} \ldots c_{\emptyset}) = c_{\eta} c_{\eta |(n-1)} \ldots c_{\emptyset}$.  We may take $\sigma_{0^{n}} = \text{id}_{\mathbb{M}}$.  Now to define $(c_{\eta})_{\eta \in \omega^{\leq n+1}}$, we put, for each $\eta \in \omega^{n}$ and $i < \omega$, $c_{\eta \frown \langle i \rangle} = \sigma_{\eta}(c_{*,i})$.  By induction and the construction, the tree constructed this way satisfies (1).  Moreover, for all $\eta \in \omega^{n}$, 
$$
c_{\eta \frown \langle i \rangle} c_{\eta} \ldots c_{\emptyset} \equiv_{A} c_{*,i} c_{0^{n}} \ldots c_{\emptyset} \equiv_{A} b_{0}b_{1}\ldots b_{n+1},
$$
so this tree satisfies (2) as well.  

This completes the inductive construction of $(c_{\eta})_{\eta \in \omega^{<\omega}}$.  By applying Fact \ref{modeling} to extract $(c'_{\eta})_{\eta \in \omega^{<\omega}}$ $s$-indiscernible over $A$ and locally based on $(c_{\eta})_{\eta \in \omega^{<\omega}}$, we obtain the desired tree.  
\end{proof}

For the following proof, recall that we say $(a_{i,j})_{i < \kappa, j < \lambda}$ is a \emph{mutually indiscernible array} over $A$ if, for each $i < \kappa$, the sequence $\overline{a}_{i} = \langle a_{i,j} : j < \lambda \rangle$ is indiscernible over $A\overline{a}_{\neq i}$.  

\begin{thm} \label{KLforanyset}
$T$ satisfies Kim's lemma for Kim-dividing over an arbitrary set $A$: a formula $\varphi(x,a)$ Kim-divides over $A$ with respect to some Morley sequence in $\tp(a/A)$ iff it Kim-divides over $A$ with respect to any such sequence. 
\end{thm}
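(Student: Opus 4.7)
The ``if'' direction is immediate; I focus on the ``only if'' direction. Fix a Morley sequence $I=\langle a_i : i<\omega\rangle$ over $A$ witnessing Kim-dividing of $\varphi(x,a_0)$ (so $\{\varphi(x,a_i) : i<\omega\}$ is $k$-inconsistent for some $k$, by $A$-indiscernibility), and let $J=\langle b_i : i<\omega\rangle$ be an arbitrary Morley sequence in $\tp(a_0/A)$; the goal is to show $\{\varphi(x,b_i) : i<\omega\}$ is inconsistent. After an $A$-automorphism I may assume $b_0=a_0$. Since $J$ is Morley, $b_{>0}\ind_A b_0$, hence $b_{>0}\ind^d_A b_0$, so Lemma \ref{building tree} applies to $I$ and $J$ and produces an $s$-indiscernible tree $(c_\eta)_{\eta\in\omega^{<\omega}}$ over $A$ whose sibling sequences are $\equiv_A I$ and whose reversed root-to-node paths are $\equiv_A$ initial segments of $J$.

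Assume for contradiction $\{\varphi(x,b_i) : i<\omega\}$ is consistent. Then by the reversed-path property and compactness, $\{\varphi(x,c_{0^n}) : n<\omega\}$ is consistent, while by the sibling property, $\{\varphi(x,c_{\eta\frown\langle i\rangle}) : i<\omega\}$ is $k$-inconsistent for every $\eta$. The plan is to turn this ``consistent branch / inconsistent siblings'' configuration on an $s$-indiscernible tree into a witness to SOP$_1$ via Fact \ref{arrayequivalent}(3), contradicting NSOP$_1$. Concretely, I set $c_{n,0}=c_{0^{n+1}}$ and $c_{n,1}=c_{0^n\frown\langle 1\rangle}$ for each $n<\omega$. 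A direct quantifier-free type computation in $L_{s,\omega}$, checking meets, levels and lex-comparisons with the earlier pairs $c_{m,0}, c_{m,1}$ for $m<n$, shows that the tuples $(c_{n,0};\overline{c}_{<n})$ and $(c_{n,1};\overline{c}_{<n})$ realize the same qftp, so $s$-indiscernibility delivers the congruence condition (a) of Fact \ref{arrayequivalent}(3); condition (b), the consistency of the $0$-column, is the consistent-branch statement.

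The principal obstacle is condition (c), the $k$-inconsistency of the off-branch column $\{\varphi(x,c_{n,1}) : n<\omega\}$: its members lie at \emph{different} levels of the tree and are not mutual siblings, so the sibling $k$-inconsistency does not apply to them directly. To handle this I would use Erd\H{o}s-Rado (Fact \ref{er}) together with the modeling property (Fact \ref{modeling}) to assemble the sibling families $(c_{0^n\frown\langle i\rangle})_{i<\omega}$ into a mutually indiscernible array over $A$ (in the sense recalled just before the theorem), carefully preserving the on-branch column, the off-branch column, and the per-row sibling $k$-inconsistency. Within this extracted array the off-branch column is $A$-indiscernible, and a short amalgamation argument exploiting indiscernibility across rows transfers the per-row sibling $k$-inconsistency to $k$-inconsistency of the off-branch column itself. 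With (a)--(c) verified, Fact \ref{arrayequivalent}(3) produces SOP$_1$, contradicting the NSOP$_1$ hypothesis and completing the proof.
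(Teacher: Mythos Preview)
Your setup through the tree construction and the definition of the pairs $(c_{n,0},c_{n,1})$ is correct, and your verification of condition (a) of Fact~\ref{arrayequivalent}(3) via $s$-indiscernibility is fine. The genuine gap is condition (c). The assertion that ``a short amalgamation argument exploiting indiscernibility across rows transfers the per-row sibling $k$-inconsistency to $k$-inconsistency of the off-branch column'' is precisely the heart of the theorem, and no such short argument exists. Mutual $A$-indiscernibility of rows, each of which is $k$-inconsistent for $\varphi$, does \emph{not} force any column or diagonal to be $k$-inconsistent: the off-branch elements $c_{0^n\frown\langle 1\rangle}$ sit at different levels with pairwise meets on the spine, so their $L_{s,\omega}$-qftp matches neither that of a sibling tuple from $I$ nor that of a path tuple from $J$, and $s$-indiscernibility says nothing about the consistency of $\{\varphi(x;c_{0^n\frown\langle 1\rangle}):n<\omega\}$. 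Note also that amalgamation-style arguments \emph{produce} realizations; they are tools for establishing consistency, not inconsistency, so they are the wrong device for the direction you need.

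This is exactly where the paper's main technical input enters. The paper reverses the roles of your $I$ and $J$ (so that siblings carry the \emph{consistent} sequence and paths the inconsistent one) and then invokes the transfer-to-models Lemma~\ref{heirbase2} to find a model $M\supseteq A$ with $M\ind_A \overline{e}_0$ over which the first row of the extracted array is a coheir sequence. Over $M$ one now has Kim's lemma, symmetry, and the independence theorem for $\ind^K$ (Fact~\ref{basic kimindep facts}), and these are used to \emph{build}, by an inductive amalgamation, a realization of $\{\varphi(x;e_{n,\alpha_n}):n<\omega\}$, i.e.\ to establish consistency along a column; Fact~\ref{arrayequivalent} then contradicts the assumed inconsistency along $J$. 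Your proposal omits the transfer to a model entirely, and without it there is no bridge from per-row behaviour to column behaviour.
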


\begin{proof}
Towards contradiction, assume we are given $\varphi(x;a)$ and Morley sequences $I = \langle a_{i} : i < \omega \rangle$ and $J = \langle b_{i} : i < \omega \rangle$ over $A$ both of which are in $\text{tp}(a/A)$ and such that $\{\varphi(x;a_{i}) : i < \omega \}$ is consistent and $\{\varphi(x;b_{i}) : i < \omega\}$ is inconsistent.  

As $J$ is a Morley sequence over $A$, we have $b_{>0} \ind_{A} b_{0}$ so, by Lemma \ref{building tree}, we can find a tree $(c_{\eta})_{\eta \in \omega^{<\omega}}$ which satisfies:
\begin{enumerate}
\item For all $\eta \in \omega^{<\omega}$, $(c_{\eta \frown \langle i \rangle})_{i < \omega} \equiv_{A} I$.  
\item For all $\eta \in \omega^{<\omega}$, $(c_{\eta},c_{\eta | l(\eta) - 1}, \ldots, c_{\emptyset}) \equiv_{A} (b_{0},b_{1},\ldots, b_{l(\eta)})$.  
\item $(c_{\eta})_{\eta \in \omega^{<\omega}}$ is $s$-indiscernible over $A$. 
\end{enumerate}

Define an array $(d_{i,j})_{i,j < \omega}$ by $d_{ij} = c_{0^{i} \frown (j+1)}$. By $s$-indiscernibility, $(d_{i,j})_{i,j < \omega}$ is mutually indiscernible and, moreover, $\overline{d}_{i} \equiv_{A} I$ for all $i < \omega$.  By compactness applied to $(d_{i,j})_{i,j < \omega}$ and $(c_{0^{i+1}})_{i < \omega}$, for $\kappa$ large enough, we can find an $A$-mutually indiscernible array $(e_{i,j})_{i < \kappa, j < \omega}$ and an $A$-indiscernible sequence $(e'_{i})_{i < \kappa}$ having the same EM-type over $A$ as $J$ (with the reversed order) such that $\overline{e}_{i} \equiv_{A} I$ and $e'_{i} \equiv_{Ae_{<i}e'_{<i}} e_{i,0}$ for all $i < \kappa$.  By Fact \ref{er}, we may assume $\kappa = \omega$ and $(\overline{e}_{i})_{i < \omega}$ is $A$-indiscernible.  

By applying Lemma \ref{heirbase2}, we can find a model $M \supseteq A$ such that $\overline{e}_{0}$ is a Morley sequence over $M$ and $M \ind_{A} \overline{e}_{0}$.  Then because $(\overline{e}_{i})_{i < \omega}$ is $A$-indiscernible, we may, moreover, assume $M$ has been chosen so that $(\overline{e}_{i})_{i < \omega}$ is $M$-indiscernible.  Let $\lambda$ be any cardinal larger than $2^{|\mathcal{L}| + |M|}$ and apply compactness to stretch the array to $(e_{i,j})_{i < \omega, j < \lambda}$, preserving $A$-mutual indiscernibility and the $M$-indiscernibility of $(\overline{e}_{i})_{i < \omega}$.  

Now by induction, we will find $\alpha_{n} < \lambda$ so that $e_{n,\alpha_{n}} \ind^{K}_{M} e_{<n, \alpha_{<n}}$ for all $n < \omega$.  Suppose we have succeeded in finding $(\alpha_{m})_{m < n}$.  Then by the pigeonhole principle and the choice of $\lambda$, there is an infinite subsequence $I_{n}$ of $\overline{e}_{n}$ so that every tuple of $I_{n}$ has the same type over $Me_{<n,\alpha_{<n}}$.  Let $\alpha_{n} < \lambda$ be least such that $e_{n,\alpha_{n}} \in I_{n}$.  As $I_{n}$ is a subsequence of a Morley sequence over $M$, $I_{n}$ is also Morley over $M$ and hence, by Kim's lemma for Kim-dividing, we have $e_{<n,\alpha_{<n}} \ind^{K}_{M} e_{n,\alpha_{n}}$ which, by symmetry, is what we need to complete the induction.

We claim that $\{\varphi(x;e_{n,\alpha_{n}}) : n < \omega\}$ is consistent.  By compactness, it suffices to show that $\{\varphi(x;e_{n,\alpha_{n}}) : n < N\}$ does not Kim-divide over $M$ for any $N$.  This is true for $N = 1$ by Kim's lemma, since $\{\varphi(x;e_{0,j}) : j < \lambda\}$ is consistent and $\overline{e}_{0}$ is a Morley sequence over $M$.  Assuming we have shown it for $N$, we can choose $c_{N} \models \{\varphi(x;e_{n,\alpha_{n}}) : n < N\}$ with $c_{N} \ind^{K}_{M} e_{<N,\alpha_{<N}}$.  Additionally, since $e_{0,\alpha_{0}} \equiv_{M} e_{N,\alpha_{N}}$, we can choose $c$ so that $c_{N}e_{0,\alpha_{0}} \equiv_{M} c e_{N,\alpha_{N}}$, from which it follows $c \ind^{K}_{M} e_{N,\alpha_{N}}$ by invariance.  Applying the independence theorem over $M$, we find $c_{N+1} \models \text{tp}(c_{N}/Me_{<N,\alpha_{<N}}) \cup \text{tp}(c/Me_{N,\alpha_{N}})$.  In particular, we have $c_{N+1} \models \{\varphi(x;e_{n,\alpha_{n}}) : n < N+1\}$, and therefore $\{\varphi(x;e_{n,\alpha_{n}}) : n < N+1\}$ does not Kim-divide, completing the induction.

By mutual indiscernibility over $A$, we also have $\{\varphi(x;e_{i,0}) : i < \omega\}$ is consistent.  By NSOP$_{1}$ and Fact \ref{arrayequivalent}, it follows that $\{\varphi(x;e'_{i}) : i < \omega\}$ is also consistent.  But this entails that $\{\varphi(x;b_{i}) : i < \omega\}$ is consistent, a contradiction.  
\end{proof}

\begin{rem}An alternative conclusion to the argument may be given, less elementary but with fewer moving parts.  Starting from the definition of the array $(d_{i,j})_{i,j < \omega}$ by $d_{ij} = c_{0^{i} \frown (j+1)}$, we let $(\overline{d}'_{i})_{i < \omega}$ be an $A$-indiscernible sequence locally based on $(\overline{d}_{i})_{i < \omega}$.  By Lemma \ref{heirbase2}, there is a model $M \supseteq A$ so that $M \ind^{f}_{A} \overline{d}'_{0}$ and $\overline{d}'_{0}$ is a coheir sequence over $M$, and, as above, we may assume $(d'_{i})_{i < \omega}$ is $M$-indiscernible.  Finally, let $(e_{i,j})_{i,j < \omega}$ be an array that is mutually indiscernible over $M$ (see, e.g., \cite[Lemma 1.2]{ChernikovNTP2})and locally based on $(d'_{i,j})_{i,j < \omega}$.  Notice that for all $f : \omega \to \omega$, because $(d_{i,j})_{i,j < \omega}$ and hence $(d'_{i,j})_{i,j<\omega}$ are mutually indiscernible over $A$, we have 
$$
(e_{i,f(i)})_{i < \omega} \equiv_{A} (d'_{i,f(i)})_{i < \omega} \equiv_{A} (d'_{i,0})_{i < \omega}.
$$
Note that for all $i$, $\{\varphi(x;e_{i,j}) : j < \omega\}$ is consistent because $\{\varphi(x;a_{j}) : j < \omega\}$ is consistent and $\overline{e}_{i}$ is a coheir sequence over $M$, so $\varphi(x;e_{i,0})$ does not Kim-divide over $M$.  Moreover, by mutual indiscernibility, $\overline{e}_{i}$ is $M\overline{e}_{<i}$-indiscernible so, in particular, $e_{i,0} \ind^{K}_{M} e_{<i,0}$.  This shows $\{\varphi(x;e_{i,0}) : i < \omega\}$ is consistent by Fact \ref{basic kimindep facts}(5).  Because the indiscernible sequence $(d'_{i,0})_{i < \omega}$ is locally based on $(d_{i,0})_{i < \omega} = (c_{0^{i} \frown 1})_{i < \omega}$, $\{\varphi(x;e_{i,0}) : i < \omega\}$ is consistent if and only if $\{\varphi(x;c_{0^{i} \frown 1}) : i < \omega\}$ is not $k$-inconsistent for any $k$. So for any $k$, we can find $i(0) < \ldots < i(k-1)$ so that $\{\varphi(x;c_{i(j)}) : j < k\}$ is consistent.  By $s$-indiscernibility we have $c_{0^{i(j)+1}} \equiv_{A (c_{0^{i(l)+1}} c_{0^{i(l)} \frown 1})_{l < j}} c_{0^{i(j)} \frown 1}$ for all $j < k$ and $\{\varphi(x;c_{0^{i(j)}}) : j < k\}$ is $m$-inconsistent for some $m$ (which does not depend on $k$), by our assumption that $\{\varphi(x;b_{i}) : i < \omega\}$ is inconsistent.  As $k$ is arbitrary, we obtain SOP$_{1}$ from Fact \ref{arrayequivalent} and compactness.  
\end{rem}
 
 \begin{cor}\label{nsop1_KL}
  Suppose $T$ satisfies existence. Then $T$ is NSOP$_1$ iff $T$ satisfies Kim's Lemma over arbitrary sets (i.e. the conclusion of Theorem \ref{KLforanyset}).
 \end{cor}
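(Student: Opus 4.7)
The forward direction is immediate: it is precisely the content of Theorem \ref{KLforanyset}, whose proof uses NSOP$_1$ (together with existence) to rule out the scenario in which $\{\varphi(x;a_i) : i<\omega\}$ is consistent along one nonforking Morley sequence and inconsistent along another. So the plan reduces to the converse: assuming existence and that Kim's Lemma holds over every set, deduce NSOP$_1$.

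For the converse, the plan is to reduce to the already-known characterization of NSOP$_1$ via Kim's Lemma over models, namely the equivalence (1)$\Leftrightarrow$(2) in Fact \ref{basic kimindep facts}. The simple but crucial observation is that over a \emph{model} $M$, any global $M$-invariant type extending $\tp(b_0/M)$ does not fork over $M$, so any Morley sequence in such a type is in particular a nonforking Morley sequence in $\tp(b_0/M)$ in the sense used throughout this paper. Hence the version of Kim-dividing studied here, when applied over a model, agrees a priori at least with what \cite{kaplan2017kim} calls Kim-dividing-in-an-invariant-type \emph{whenever inconsistency is witnessed on the invariant side}.

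Given this, I would argue as follows. Fix a model $M$ and a formula $\varphi(x;b_0)$, and suppose $\{\varphi(x;b_i) : i<\omega\}$ is inconsistent along some Morley sequence $\langle b_i \rangle$ in a global $M$-invariant type extending $\tp(b_0/M)$. By the observation, $\langle b_i \rangle$ is a nonforking Morley sequence, so $\varphi(x;b_0)$ Kim-divides over $M$ in the sense of this paper. Our standing hypothesis --- Kim's Lemma over the arbitrary set $A=M$ --- then forces $\{\varphi(x;b'_i) : i<\omega\}$ to be inconsistent along \emph{every} nonforking Morley sequence in $\tp(b_0/M)$, and in particular along every invariant Morley sequence. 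The reverse implication is trivial, since a coheir extension of $\tp(b_0/M)$ always provides some invariant Morley sequence to witness Kim-dividing. This establishes condition (2) of Fact \ref{basic kimindep facts}, so $T$ is NSOP$_1$ by (1)$\Leftrightarrow$(2) there.

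There is no real obstacle to overcome: the only substantive point is making sure invariant Morley sequences fit inside the class of nonforking Morley sequences over a model, so that the newly-proved, seemingly weaker Kim's Lemma (quantifying over the larger class of nonforking Morley sequences) actually implies the earlier version (quantifying only over invariant Morley sequences). Everything else is formal unpacking of definitions and invocation of Fact \ref{basic kimindep facts}.
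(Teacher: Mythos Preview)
Your proof is correct and follows the same route as the paper's one-line argument, which simply cites Theorem~\ref{KLforanyset} for the forward direction and \cite[Theorem 3.15]{kaplan2017kim} (i.e., Fact~\ref{basic kimindep facts}) for the converse. You have merely made explicit the one point the paper leaves implicit, namely that invariant Morley sequences over a model are in particular nonforking Morley sequences, so Kim's Lemma over arbitrary sets in the paper's sense specializes to Kim's Lemma over models in the sense of \cite{kaplan2017kim}.
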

 \begin{proof}
  This follows by Theorem \ref{KLforanyset} and the fact that Kim's Lemma for Kim-dividing over models implies (and is equivalent to) NSOP$_1$ 
  (\cite[Theorem 3.15]{kaplan2017kim}).
 \end{proof}

%
 %
%
%
\section{Symmetry}\label{secsym}
Recall in this section we assume $T$ has NSOP$_1$ and satisfies existence for forking independence.

From the Kim's lemma for Kim-dividing we conclude:
\begin{prop}\label{kforkingequalskdividing}{(Kim-forking = Kim-dividing)}
For any $A$, if $\varphi(x;b)$ Kim-forks over $A$ then $\varphi(x;b)$ Kim-divides over $A$.
\end{prop}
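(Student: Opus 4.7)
The plan is to argue by contradiction using Kim's lemma over arbitrary sets (Theorem \ref{KLforanyset}) as the main engine, with the uniformity Lemma \ref{uniformitylemma} providing the bridge between a Morley sequence in $\tp(b/A)$ and a Morley sequence in the joint type of $b$ together with the parameters $b_0,\ldots,b_{k-1}$ witnessing Kim-forking.

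Suppose $\varphi(x;b)$ Kim-forks over $A$, so $\varphi(x;b) \vdash \bigvee_{i<k} \psi_i(x;b_i)$ with each $\psi_i(x;b_i)$ Kim-dividing over $A$. First I would fix a Morley sequence $\langle b^n : n < \omega\rangle$ over $A$ in $\tp(b/A)$, and then invoke Lemma \ref{uniformitylemma} applied to the ``big'' tuple $b' = (b_0,\ldots,b_{k-1})$ (viewed as a single tuple) to obtain a Morley sequence $\langle b^n b^n_0 \cdots b^n_{k-1} : n < \omega\rangle$ over $A$ with $b^0 b^0_0 \cdots b^0_{k-1} \equiv_A b b_0 \cdots b_{k-1}$. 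In particular each $\varphi(x;b^n) \vdash \bigvee_{i<k} \psi_i(x;b^n_i)$.

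Now assume for contradiction that $\varphi(x;b)$ does not Kim-divide over $A$. Since $\langle b^n : n < \omega\rangle$ is Morley over $A$, Theorem \ref{KLforanyset} gives that $\{\varphi(x;b^n) : n < \omega\}$ is consistent; pick a realization $c$. For each $n$, $c \models \psi_{i(n)}(x;b^n_{i(n)})$ for some $i(n) < k$, so by the pigeonhole principle there is a fixed $i < k$ and an infinite $S \subseteq \omega$ with $c \models \psi_i(x;b^n_i)$ for all $n \in S$. But projecting the Morley sequence $\langle b^n b^n_0 \cdots b^n_{k-1} : n < \omega\rangle$ onto the $(i+1)$-st coordinate yields an $A$-Morley sequence $\langle b^n_i : n < \omega \rangle$ in $\tp(b_i/A)$ (indiscernibility restricts to subtuples and non-forking is monotone in the right argument), and restricting an $A$-Morley sequence to an infinite subset of indices preserves the Morley property. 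Hence $\{\psi_i(x;b^n_i) : n \in S\}$ is consistent along an $A$-Morley sequence in $\tp(b_i/A)$, and Theorem \ref{KLforanyset} applied in the other direction forces $\psi_i(x;b_i)$ to not Kim-divide over $A$, a contradiction.

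The only mild obstacle is verifying that projecting a Morley sequence of tuples to a subtuple preserves being Morley, and that an infinite subsequence of an $A$-Morley sequence is again $A$-Morley; both are standard and follow from base monotonicity together with indiscernibility. The rest is essentially the classical reduction of forking to dividing in simple theories, with Kim's lemma over arbitrary sets (now available through Theorem \ref{KLforanyset}) playing the role of the usual Kim's lemma.
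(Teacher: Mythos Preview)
Your proof is correct and follows essentially the same route as the paper: build a Morley sequence in the joint type of $b$ together with the witnessing parameters, apply pigeonhole to a realization of $\{\varphi(x;b^n):n<\omega\}$, and use Kim's lemma over $A$ to contradict Kim-dividing of some $\psi_i$. The only difference is that you first fix a Morley sequence in $\tp(b/A)$ and then invoke Lemma~\ref{uniformitylemma} to fatten it, whereas the paper simply takes a Morley sequence in $\tp(b,c^0,\ldots,c^{k-1}/A)$ directly (which exists by the existence axiom); your detour through Lemma~\ref{uniformitylemma} is harmless but unnecessary, and your first invocation of Theorem~\ref{KLforanyset} is likewise superfluous since ``not Kim-dividing'' already means consistency along \emph{every} Morley sequence by definition.
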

\begin{proof}
Suppose $\varphi(x;b) \vdash \bigvee_{j< k} \psi_{j}(x;c^{j})$, where each
$\psi_{j}(x;c^{j})$ Kim-divides over $A$.    Let $(b_{i},c_{i}^{0}, \ldots, c^{k-1}_{i})_{i < \omega}$ be
a Morley sequence in $\tp(b,c^0,\dots,c^{k-1}/A)$. Since $(b_i)_i$ is a Morley sequence in 
$\tp(b/A)$, to get that $\varphi(x;b) $ Kim-divides over $A$ it is enough to show that 
$\{\varphi(x;b_{i}) : i < \omega \}$ is inconsistent. If not, then there is 
some $a\models \{\varphi(x;b_{i}) : i < \omega \}$. Then, by the pigeonhole principle,
we get that, for some $j<k$, $a$ realizes 
$\psi_{j}(x;c^{j}_i),$  for infinitely many $i$'s. As $(c_{i}^{j})_{i<\omega}$ is a Morley
sequence in $\tp(c^j/A)$, it follows from the Kim's lemma that
$\psi_{j}(x;c^{j}_i),$ does not Kim-divide over $A$, a contradiction.
\end{proof}

Below, we conclude that, under our assumptions, 
Kim-dividing (=Kim-forking) satisfies symmetry, by modifying
the notion of a  Morley tree and the proof of symmetry from \cite{kaplan2017kim}. As the argument is essentially the same, we only give a sketch.
We briefly recall the notation from \cite{kaplan2017kim}.

Recall the language $L_{s,\alpha}$ and its interpretation in trees were introduced at the beginning of Section \ref{section_Klemma}.
Our trees will be understood to be an $L_{s,\alpha}$-structure for some appropriate $\alpha$.  We recall the definition of a class of trees $\mathcal{T}_{\alpha}$ below:

\begin{defn}
Suppose $\alpha$ is an ordinal.  We define $\mathcal{T}_{\alpha}$ to be the set of functions $f$ such that 
\begin{itemize}
\item $\text{dom}(f)$ is an end-segment of $\alpha$ of the form $[\beta,\alpha)$ for $\beta$ equal to $0$ or a successor ordinal.  If $\alpha$ is a successor, we allow $\beta = \alpha$, i.e. $\text{dom}(f) = \emptyset$.
\item $\text{ran}(f) \subseteq \omega$.
\item finite support:  the set $\{\gamma \in \text{dom}(f) : f(\gamma) \neq 0\}$ is finite.    
\end{itemize}
We interpret $\mathcal{T}_{\alpha}$ as an $L_{s,\alpha}$-structure by defining 
\begin{itemize}
\item $f \unlhd g$ if and only if $f \subseteq g$.  Write $f \perp g$ if $\neg(f \unlhd g)$ and $\neg(g \unlhd f)$.  
\item $f \wedge g = f|_{[\beta, \alpha)} = g|_{[\beta, \alpha)}$ where $\beta = \text{min}\{ \gamma : f|_{[\gamma, \alpha)} =g|_{[\gamma, \alpha)}\}$, if non-empty (note that $\beta$ will not be a limit, by finite support). Define $f \wedge g$ to be the empty function if this set is empty (note that this cannot occur if $\alpha$ is a limit).  
\item $f <_{lex} g$ if and only if $f \vartriangleleft g$ or, $f \perp g$ with $\text{dom}(f \wedge g) = [\gamma +1,\alpha)$ and $f(\gamma) < g(\gamma)$
\item For all $\beta \leq \alpha$, $P_{\beta} = \{ f \in \mathcal{T}_{\alpha} : \text{dom}(f) = [\beta, \alpha)\}$.  
\end{itemize}
\end{defn}

\begin{defn}
Suppose $\alpha$ is an ordinal.  
\begin{enumerate}
\item (Restriction) If $w \subseteq \alpha$, the \emph{restriction of} $\mathcal{T}_{\alpha}$ \emph{to the set of levels  }$w$ is given by 
$$
\mathcal{T}_{\alpha} \upharpoonright w = \{\eta \in \mathcal{T}_{\alpha} : \min (\text{dom}(\eta)) \in w \text{ and }\beta \in \text{dom}(\eta) \setminus w \implies \eta(\beta) = 0\}.
$$
\item (Concatenation)  If $\eta \in \mathcal{T}_{\alpha}$, $\text{dom}(\eta) = [\beta+1,\alpha)$, and $i < \omega$, let $\eta \frown \langle i \rangle$ denote the function $\eta \cup \{(\beta,i)\}$.  We define $\langle i \rangle \frown \eta \in \mathcal{T}_{\alpha+1}$ to be $\eta \cup \{(\alpha,i)\}$.  We write $\langle i \rangle$ for $\emptyset \frown \langle i \rangle$.
\item (Canonical inclusions) If $\alpha < \beta$, we define the map $\iota_{\alpha \beta} : \mathcal{T}_{\alpha} \to \mathcal{T}_{\beta}$ by $\iota_{\alpha \beta}(f) = f \cup \{(\gamma, 0) : \gamma \in \beta \setminus \alpha\}$.
\item (The all $0$'s path) If $\beta < \alpha$, then $\zeta_{\beta}$ denotes the function with $\text{dom}(\zeta_{\beta}) = [\beta, \alpha)$ and $\zeta_{\beta}(\gamma) = 0$ for all $\gamma \in [\beta,\alpha)$.  This defines an element of $\mathcal{T}_{\alpha}$ if and only if $\beta \in \{\gamma \in\alpha\mid  \gamma \mbox{ is not limit}\}=:[\alpha]$.  
\end{enumerate}
\end{defn}

\begin{defn}\label{weaklyspread}
Suppose $(a_{\eta})_{\eta \in \mathcal{T}_{\alpha}}$ is a tree of tuples, and $C$ is a set of parameters.  
\begin{enumerate}
\item We say $(a_{\eta})_{\eta \in \mathcal{T}_{\alpha}}$ is \emph{weakly spread out over} $C$ 
if for all $\eta \in \mathcal{T}_{\alpha}$ with $\text{dom}(\eta) =[\beta+1,\alpha)$ for
some $\beta < \alpha$, 
$(a_{\unrhd \eta \frown \langle i \rangle})_{i < \omega}$ is a Morley sequence 
in $\tp(a_{\unrhd \eta \frown \langle 0 \rangle}/C)$.
\item Suppose $(a_{\eta})_{\eta \in \mathcal{T}_{\alpha}}$ is a tree which is weakly 
spread out and $s$-indiscernible over $C$ and for all $w,v \in [\alpha]^{<\omega}$ with $|w| = |v|$,
$$
(a_{\eta})_{\eta \in \mathcal{T}_{\alpha} \upharpoonright w} \equiv_{C} (a_{\eta})_{\eta \in \mathcal{T}_{\alpha} \upharpoonright v}
$$
then we say $(a_{\eta})_{\eta \in \mathcal{T}_{\alpha}}$ is a \emph{weakly Morley tree} over $C$.  
\item A \emph{weak tree Morley sequence} over $C$ is a $C$-indiscernible sequence of the form $(a_{\zeta_\beta})_{\beta \in [\alpha]}$ for some 
weakly Morley tree $(a_{\eta})_{\eta \in \mathcal{T}_{\alpha}}$ over $C$.  
\end{enumerate}
\end{defn}

\begin{prop}\label{chaincondition}
 If $a \ind^K_{A} b$ and  $I = (b_{i})_{i < \omega}$  is a Morley sequence in $\tp(b/A)$ with $b = b_{0}$, then there is $a' \equiv_{Ab} a$ such that $a'\ind^K_A I$ and $I$ is $Aa'$-indiscernible.  
\end{prop}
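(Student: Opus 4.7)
The plan is to follow the chain-condition strategy: first realize ``the type of $a$ over $Ab$ at every $b_i$ simultaneously'' by some $a^{\ast}$, then pass to an indiscernible subsequence over $Aa^{\ast}$ and transfer back to $I$ via an $A$-automorphism, and finally verify $\ind^{K}$ by feeding the result into Kim's lemma over arbitrary sets (Theorem \ref{KLforanyset}).

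Form the partial type
$$
\Gamma(x) \;:=\; \bigcup_{i<\omega}\{\varphi(x,b_i) : \varphi(x,b) \in \tp(a/Ab)\}.
$$
By compactness, its consistency reduces to: for any single $\psi(x,y)$ with $\psi(x,b) \in \tp(a/Ab)$, the set $\{\psi(x,b_i) : i<\omega\}$ is consistent.  Now $\psi(x,b)$ does not Kim-divide over $A$ since $a \ind^{K}_{A} b$ and Kim-forking equals Kim-dividing (Proposition \ref{kforkingequalskdividing}), and $I$ is an $A$-Morley sequence in $\tp(b/A)$, so Theorem \ref{KLforanyset} gives consistency.  Let $a^{\ast} \models \Gamma$.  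After stretching $I$ to sufficient length, apply Fact \ref{er} to extract an $Aa^{\ast}$-indiscernible sequence $I^{\ast} = \langle b^{\ast}_i : i < \omega \rangle$ based on $I$ over $Aa^{\ast}$.  Then $I^{\ast}$ has the same $A$-EM-type as $I$ and hence equals $I$ up to an $A$-automorphism; the basing together with $a^{\ast} \models \Gamma$ yields $a^{\ast} b^{\ast}_i \equiv_A ab$ for each $i$.  Transporting by the $A$-automorphism sending $I^{\ast}$ to $I$ produces $a'$ with $a' b_i \equiv_A ab$ for all $i$ and $I$ indiscernible over $Aa'$; in particular $a' \equiv_{Ab} a$.

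To finish, show $a' \ind^{K}_{A} I$.  Given $\varphi(x;b_{i_0},\ldots,b_{i_{n-1}}) \in \tp(a'/AI)$, the $Aa'$-indiscernibility of $I$ lets us replace the parameters and assume the formula is $\varphi(x;b_0,\ldots,b_{n-1})$.  Partition $I$ into consecutive blocks $\bar c_k := (b_{nk},\ldots,b_{nk+n-1})$ for $k < \omega$.  Since $I$ is $A$-indiscernible, so is $(\bar c_k)_{k<\omega}$, and iterating left-transitivity of $\ind$ applied to the Morley condition on $I$ gives $\bar c_k \ind_A \bar c_{<k}$ for every $k$.  Hence $(\bar c_k)_{k<\omega}$ is an $A$-Morley sequence in $\tp((b_0,\ldots,b_{n-1})/A)$ with first term $(b_0,\ldots,b_{n-1})$, and the $Aa'$-indiscernibility of $I$ makes $a'$ realize every $\varphi(x;\bar c_k)$.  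So $\{\varphi(x;\bar c_k) : k < \omega\}$ is consistent, whence Theorem \ref{KLforanyset} gives that $\varphi(x;b_0,\ldots,b_{n-1})$ does not Kim-divide over $A$.  Applying Proposition \ref{kforkingequalskdividing}, no formula in $\tp(a'/AI)$ Kim-forks over $A$, i.e.\ $a' \ind^{K}_{A} I$.

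The main obstacle is the last step: for every tuple-parameter formula in $\tp(a'/AI)$ we must exhibit a concrete $A$-Morley sequence of \emph{tuples} witnessing non-Kim-dividing, whereas $I$ directly supplies only Morley sequences of singletons.  The crucial manoeuvre is the block repackaging $\bar c_k = (b_{nk},\ldots,b_{nk+n-1})$, together with iterated application of left-transitivity of $\ind$ to confirm that this blocked sequence is still Morley over $A$.  Once this is available, the $Aa'$-indiscernibility of $I$ provides the common realization and Kim's lemma over arbitrary sets does the rest.
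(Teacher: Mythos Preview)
Your argument is correct and follows essentially the same route as the paper: realize $\tp(a/Ab)$ simultaneously along the Morley sequence via Kim's lemma, extract an $Aa^{\ast}$-indiscernible sequence, transport back to $I$, and then verify $a'\ind^{K}_{A} I$ by the block repackaging $(b_{nk},\ldots,b_{nk+n-1})_{k<\omega}$ together with left-transitivity and Kim's lemma. One small reorder is needed: stretch $I$ to sufficient length \emph{before} choosing $a^{\ast}\models\Gamma$, so that $a^{\ast}b_{i}\equiv_{A}ab$ holds for every $i$ in the long sequence fed into Fact~\ref{er}; as written, $a^{\ast}$ is only tied to the first $\omega$ terms. (Also, the appeal to Proposition~\ref{kforkingequalskdividing} in the first paragraph is harmless but unnecessary: non-Kim-forking already implies non-Kim-dividing.)
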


\begin{proof}
Extend $I$ to a sufficiently long Morley sequence $(b_{i})_{i < \kappa} $ in $\tp(b/A)$.  As $a \ind^K_{A} b$, we may assume that $b\equiv_{Aa} b_i$ for all $i<\kappa$ (by moving $a$ over $Ab$). Then, by Fact \ref{er}, find an $Aa$ indiscernible sequence $(b_j')_{j<\omega}$ based on 
$(b_{i})_{i < \kappa} $ over $Aa$. Let $a'$ to be the image of $a$ under an automorphism over $A$ sending $(b_j')_{j<\omega}$ to $I$. Then $I$ is $Aa'$ indiscernible. Now, it is enough to check that for all $n<\omega$, we have that $a' \ind^K_{A} b_{<n}$. But this follows from the 
indiscernibility and the Kim's lemma, as $(b_{kn}, b_{kn + 1}, \ldots, b_{kn+n-1})_{k < \omega}$ is a Morley sequence in $\tp(b_{<n}/A)$ (by left-transitivity of forking independence).
\end{proof}
Now, we modify the proof of Lemma 5.11 from \cite{kaplan2017kim} to get:

\begin{lem}\label{treeexistence}
If $a \ind^K_{A} b$, then for any ordinal $\alpha \geq 1$, 
there is a weakly spread out $s$-indiscernible tree
$(c_{\eta})_{\eta \in \mathcal{T}_{\alpha}}$ over $A$ such that if
$\eta \vartriangleleft \nu$ and $\text{dom}(\nu) = \alpha$, then 
$c_{\eta}c_{\nu} \equiv_{A} ab$.  
\end{lem}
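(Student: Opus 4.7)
Plan: I will follow the strategy of \cite[Lemma 5.11]{kaplan2017kim}, replacing the appeals to coheir or invariant-type extensions by applications of Proposition \ref{chaincondition}. The induction is on $\alpha \geq 1$, and to make it propagate cleanly I strengthen the statement by also requiring, when $\alpha$ is a successor, that $c_\emptyset \ind^K_A L$, where $L$ enumerates the leaves $\{c_\nu : \dom(\nu) = [0,\alpha)\}$. The hypothesis $a \ind^K_A b$ of the lemma is precisely the base case of this strengthening.

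For the base case $\alpha = 1$, choose a Morley sequence $I = (b_i)_{i<\omega}$ in $\tp(b/A)$ with $b_0 = b$ and apply Proposition \ref{chaincondition} to obtain $a' \equiv_{Ab} a$ with $a' \ind^K_A I$ and $I$ being $Aa'$-indiscernible. Setting $c_\emptyset := a'$ and $c_{\langle i \rangle} := b_i$, the tree is weakly spread out, $s$-indiscernible over $A$, satisfies $c_\emptyset c_{\langle i \rangle} \equiv_A ab$ (by $Aa'$-indiscernibility combined with $a'b \equiv_A ab$), and the strengthened condition is exactly $a' \ind^K_A I$.

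For the successor step $\alpha \to \alpha + 1$, given $T = (c_\eta)_\eta$ satisfying the strengthened IH with leaf tuple $L$, take a Morley sequence $(T_i)_{i<\omega}$ over $A$ in $\tp(T/A)$ with $T_0 = T$; writing $T_i = (c^i_\eta)_\eta$ and letting $L_i$ be its leaves, the projection $(L_i)_i$ is Morley in $\tp(L/A)$ with $L_0 = L$. Applying Proposition \ref{chaincondition} to $c_\emptyset \ind^K_A L$ and the Morley sequence $(L_i)_i$ yields $c'_\emptyset$ with $c'_\emptyset L_0 \equiv_A c_\emptyset L$, $c'_\emptyset \ind^K_A (L_i)_i$, and $(L_i)_i$ being $Ac'_\emptyset$-indiscernible. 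Place $c'_\emptyset$ at the root of the $(\alpha+1)$-tree and hang $T_i$ below each $\langle i \rangle$. Then weak spread-out holds (Morleyness of $(T_i)_i$ at the top level, plus the inductive spread-out inside each $T_i$); the leaf condition $c'_\emptyset c^i_\nu \equiv_A ab$ follows from $c'_\emptyset L_i \equiv_A c_\emptyset L$ via $Ac'_\emptyset$-indiscernibility, while ancestor-leaf pairs inside each $T_i$ inherit the condition from IH; and the strengthened condition $c'_\emptyset \ind^K_A \bigcup_i L_i$ is exactly the Kim-independence provided by the proposition. Finally, apply the modeling property (Fact \ref{modeling}) to extract an $s$-indiscernible tree locally based on the one constructed; the remaining conditions survive extraction since they are determined by EM-types over $A$ and types of finite sub-tuples.

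For limit $\alpha$ I use compactness: the partial type encoding the three required conditions on a tree indexed by $\mathcal{T}_\alpha$ is finitely satisfiable by the successor-case trees for $\gamma < \alpha$, restricted along the canonical inclusions $\iota_{\gamma \alpha}$, and any realization of it supplies the desired tree. The main obstacle is the successor step, specifically ensuring that the strengthened Kim-independence hypothesis propagates cleanly — which is exactly what Proposition \ref{chaincondition} is designed to do — combined with the need to maintain the weakly spread-out property and the Kim-independence of the new root from the leaves through the modeling-property extraction; both survive because the relevant conditions are captured by types of finite sub-tuples over $A$.
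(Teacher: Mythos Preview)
Your overall strategy matches the paper's, but two cases in the transfinite induction are not handled.

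First, your successor step $\alpha \to \alpha+1$ relies on the root $c_\emptyset \in \mathcal{T}_\alpha$ and the strengthened hypothesis $c_\emptyset \ind^K_A L$. When $\alpha$ is a limit ordinal, $\mathcal{T}_\alpha$ has no root (the empty function is excluded by the definition), so your argument does not produce the $(\alpha+1)$-tree at all. The paper treats this case separately: it builds a \emph{coherent} system of trees satisfying $c^\gamma_\eta = c^\beta_{\iota_{\gamma\beta}(\eta)}$ for $\gamma < \beta$, so that when $\alpha$ is a limit the elements $c^\beta_\emptyset$ for successor $\beta < \alpha$ are literally present in the $\alpha$-tree and serve as witnesses in a compactness argument producing a new root $c$ with $c \ind^K_A (c^\alpha_\eta)_\eta$ and $c\, c_\nu \equiv_A ab$ for every leaf $\nu$. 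Without the coherent system you have no such witnesses.

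Second, your limit step proposes encoding ``the three required conditions'' by a partial type and realizing it by compactness. But weak spread-outness is a non-forking condition and is not type-definable, so a realization of the type-definable part ($s$-indiscernibility together with the leaf condition) need not be weakly spread out. The paper's coherent system again avoids this: at a limit $\delta$ the $\delta$-tree is simply the union of the earlier trees, and weak spread-outness is inherited pointwise rather than recovered by compactness.

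A smaller point: your strengthened hypothesis $c_\emptyset \ind^K_A L$ is unnecessary. Once the tree is weakly spread out and $s$-indiscernible over $A$, the sequence $(c_{\unrhd \langle i \rangle})_{i<\omega}$ is Morley over $A$ and $Ac_\emptyset$-indiscernible, so $c_\emptyset \ind^K_A (c_{\unrhd \langle i \rangle})_{i<\omega}$ follows immediately from Kim's lemma; the paper uses this observation together with extension for $\ind^K$ (to bring the old root into the independence) rather than carrying an extra inductive hypothesis. Proceeding this way also sidesteps your claim that $c'_\emptyset \ind^K_A \bigcup_i L_i$ survives the modeling-property extraction: that claim is not justified by ``captured by types of finite sub-tuples over $A$'', since Kim-forking of $\varphi(x;\bar c)$ depends on the full $A$-type of $\bar c$ and your pre-extraction tree is not $s$-indiscernible.
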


\begin{proof}
We will argue by induction on $\alpha$.  
Suppose $\alpha=1$.
Assume $a \ind^K_{A} b$. Then, by Proposition \ref{chaincondition}, we can choose a Morley sequence in $\tp(b/A)$ which is $Aa$-indiscernible.
Put $c^{1}_{\emptyset} = a$ and $c^1_{\langle i \rangle} = b_{i}$.  Then
$(c^{1}_{\eta})_{\eta \in \mathcal{T}_{1}}$ satisfies the requirements.  

For the successor step, suppose for some $\alpha$ that we have constructed 
$(c^{\beta}_{\eta})_{\eta \in \mathcal{T}_{\beta}}$
for $1 \leq \beta \leq \alpha$ such that, if $\gamma < \beta \leq \alpha$ and
$\eta \in \mathcal{T}_{\gamma}$, then
$c^{\gamma}_{\eta} = c^{\beta}_{\iota_{\gamma \beta}(\eta)}$.  
Assume first that $\alpha$ is a successor ordinal.
By weak spread-outness, we know
that $(c^{\alpha}_{\unrhd {\langle i \rangle} })_{ i < \omega}$ is a
Morley sequence over $A$, which is, by $s$-indiscernibility over $A$,
$Ac^{\alpha}_{\emptyset}$-indiscernible. 
So $c^{\alpha}_{\emptyset} \ind^K_{A} (c^{\alpha}_{\unrhd \langle i \rangle})_{i < \omega}$. 
By extension for $\ind^K$,
we may find 
$c' \equiv_{A(c^{\alpha}_{\unrhd \langle i \rangle})_{i < \omega}} c^{\alpha}_{\emptyset}$ such
that 
$$
c' \ind^K_{A} (c^{\alpha}_{\eta})_{\eta \in \mathcal{T}_{\alpha}}.  
$$

Let $((c^{\alpha}_{\eta,i})_{\eta \in \mathcal{T}_{\alpha}} )_{ i < \omega}$ 
be a Morley sequence in 
$\tp((c^{\alpha}_{\eta})_{\eta \in \mathcal{T}_{\alpha}}/A)$
with $c^{\alpha}_{\eta,0} = c^{\alpha}_{\eta}$ for all $\eta \in 
\mathcal{T}_{\alpha}$.  
By Proposition  \ref{chaincondition}, we can find $c'' 
\equiv_{A(c^{\alpha}_{\eta})_{\eta \in \mathcal{T}_{\alpha}}} c'$ such that $c'' \ind^K_{A}
(c^{\alpha}_{\eta,i})_{\eta \in \mathcal{T}_{\alpha}, i < \omega}$ and
$((c^{\alpha}_{\eta,i})_{\eta \in \mathcal{T}_{\alpha}})_{i < \omega}$ is $Ac''$-indiscernible. 

Define a new tree $(d_{\eta})_{\eta \in \mathcal{T}_{\alpha+1}}$ by setting $d_{\emptyset} = c''$
and $d_{\eta\cup\{(\alpha,i)\}} = c^{\alpha}_{\eta,i}$ for all $\eta \in \mathcal{T}_{\alpha}$.  
Then let $(c^{\alpha+1}_{\eta})_{\eta \in \mathcal{T}_{\alpha+1}}$ be a tree $s$-indiscernible over
$A$ locally based on $(d_{\eta})_{\eta \in \mathcal{T}_{\alpha}}$.  By an automorphism, we may 
assume that $c^{\alpha+1}_{\iota_{\alpha\alpha+1}(\eta)} = c^{\alpha}_{\eta}$ for all $\eta \in 
\mathcal{T}_{\alpha}$.  This satisfies our requirements.  

If $\alpha$ is a limit ordinal, then, in order to repeat the above argument, find by compactness
$c$ which is $\ind^K$-independent
from $(c^{\alpha}_{\eta})_{\eta \in \mathcal{T}_{\alpha}} $ over $A$ such that 
for all $\nu\in\mathcal{T}_{\alpha}$ with $dom(\nu)=\alpha$,
$cc_{\nu} \equiv_{A} ab$
(notice that, since $\ind^K$-dependence is a property witnessed by formulas, one can express these
conditions by a type, and using the elements $c^{\beta}_{\emptyset}$, $\beta<\alpha$,
one gets that it is consistent).

Finally, for the limit step,  we obtain  
$(c^{\beta}_{\eta})_{\eta \in \mathcal{T}_{\delta}}$ 
from $(c^{\beta}_{\eta})_{\eta \in \mathcal{T}_{\beta}},$ $1 \leq \beta < \delta$  in the natural way
for any limit ordinal $\delta$.
\end{proof}
Now, using the same combinatorial arguments as in \cite{kaplan2017kim}, we can conclude:

\begin{lem}\label{movingtms}
Let $A$ be any set of parameters.
If $a \ind^K_{A} b$, then there is a weak tree Morley 
sequence $(a_{i})_{i < \omega}$ over $A$ which is $Ab$-indiscernible with $a_{0} = a$.  
\end{lem}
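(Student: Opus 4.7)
The plan mirrors the argument of Lemma 5.13 in \cite{kaplan2017kim}, adapted with the tools developed here. Given $a \ind^K_A b$, I apply Lemma \ref{treeexistence} to obtain a weakly spread out $s$-indiscernible tree $(c_\eta)_{\eta \in \mathcal{T}_\alpha}$ over $A$ of sufficiently large height $\alpha$, with the leaf-ancestor property $c_\eta c_{\zeta_0} \equiv_A ab$ whenever $\eta \vartriangleleft \zeta_0$. Note that this tree cannot itself be weakly Morley over $A$ when $\tp(a/A) \neq \tp(b/A)$, since the singleton case of the level-interchange condition would force $c_{\zeta_0} \equiv_A c_{\zeta_\beta}$ for all $\beta$; the idea is to treat the leaf $c_{\zeta_0}$ as a distinguished auxiliary parameter rather than as a spine node.

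Setting $c_{\zeta_0}$ aside, I consider the subtree on levels $[\alpha]\setminus\{0\}$ and perform an iterated Erd\H{o}s-Rado extraction along its finite level subsets: at each size $n$, the $\mathcal{L}(A)$-types realized by the $n$-level restrictions form a bounded collection, so Fact \ref{er} lets one pass to a cofinal subset of levels on which the restriction type depends only on $n$, and Fact \ref{modeling} is invoked after each pass to restore $s$-indiscernibility and weak spread-outness. The result is a weakly Morley tree over $A$ whose spine consists of realizations of $\tp(a/A)$. One further Erd\H{o}s-Rado pass, this time over $Ac_{\zeta_0}$, thins the spine to an $Ac_{\zeta_0}$-indiscernible subsequence, and since restricting the level set only strengthens the level-interchange invariance, the corresponding subtree remains weakly Morley over $A$. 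By the leaf-ancestor condition $(c_{\zeta_{\beta_0}}, c_{\zeta_0}) \equiv_A (a,b)$ for the first spine level $\beta_0$, so an $A$-automorphism sending this pair to $(a,b)$ transports the spine to the desired sequence $(a_i)_{i<\omega}$ with $a_0 = a$ and $Ab$-indiscernibility, still a weak tree Morley sequence over $A$ as the automorphic image of one.

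The main obstacle is coordinating the extractions in the second paragraph: ensuring that the level-interchange invariance, the $s$-indiscernibility, and the weak spread-outness all survive each pass, while preserving the auxiliary role of $c_{\zeta_0}$ needed for both the leaf-ancestor identification and the $Ac_{\zeta_0}$-indiscernibility of the spine. This is precisely the package of combinatorial arguments from \cite[\S5]{kaplan2017kim} alluded to by the authors; it transfers to the present setting thanks to the generality of Lemma \ref{treeexistence}, Fact \ref{modeling}, Fact \ref{er}, and Theorem \ref{KLforanyset} assumed throughout this section.
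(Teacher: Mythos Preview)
Your proposal follows the same architecture as the paper, which simply defers to \cite[\S5]{kaplan2017kim}: apply Lemma \ref{treeexistence} to build a large tree, extract a weakly Morley subtree from the non-leaf levels, and use the leaf $c_{\zeta_0}$ as the anchor for $b$. The outline is correct.

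There is, however, a technical slip that would break the argument if taken literally. You write that Fact \ref{modeling} is invoked ``after each pass to restore $s$-indiscernibility and weak spread-outness.'' This is both unnecessary and harmful. It is unnecessary because restricting a weakly spread-out $s$-indiscernible tree to a subset $W$ of its levels already preserves both properties: the reindexing $\mathcal{T}_\alpha \upharpoonright W \cong \mathcal{T}_{\mathrm{otp}(W)}$ respects $\unlhd$, $\wedge$, $<_{lex}$ and carries $P_{w_i}$ to $P_i$, so $s$-indiscernibility transfers; and sub-tuples of a Morley sequence remain Morley, so weak spread-outness transfers. It is harmful because invoking Fact \ref{modeling} would replace the tree by one merely \emph{locally based} on it, severing the link to the specific element $c_{\zeta_0}$ that you rely on in the next step (``one further pass over $Ac_{\zeta_0}$''). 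The extraction in \cite[Lemma 5.10]{kaplan2017kim} is a pure level-thinning via Erd\H{o}s--Rado for colorings of $n$-element subsets of the level set (not Fact \ref{er} as stated here, which is the sequence version), and it yields a genuine subtree whose spine nodes $d_{\zeta_i} = c_{\zeta_{\gamma_i}}$ retain the relation $d_{\zeta_i} c_{\zeta_0} \equiv_A ab$. With that correction your argument goes through; alternatively, and slightly more efficiently, one can perform the entire level extraction over the base $Ac_{\zeta_0}$ from the start, which gives the $Ac_{\zeta_0}$-indiscernibility of the spine directly and still yields a weakly Morley tree over $A$ (since $A \subseteq Ac_{\zeta_0}$ and spread-outness over $A$ is preserved by level restriction).
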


Notice that if $(a_{\eta})_{\eta \in \mathcal{T}_{\omega}}$ is a weak Morley tree over $A$, then 
if for  $\eta_{i} \in \mathcal{T}_{\omega}$ given by $\text{dom}(\eta_{i}) = [i,\omega)$ and
$$\eta_{i}(j) = \left\{ \begin{matrix}
1 & \text{ if } i = j \\
0 & \text{ otherwise},
\end{matrix} \right.
 $$
$(a_{\eta_i})_{i<\omega}$ is a Morley sequence over $A$.
Hence, repeating the proof of Proposition 5.13 from \cite{kaplan2017kim}, we get:
\begin{cor}{(Kim's lemma for weak tree Morley sequences)}\label{kimslemmafortms}
Let $A$ be any
set of parameters. The following are equivalent:
\begin{enumerate}
\item $\varphi(x;a)$ Kim-divides over $A$.
\item For some weak tree Morley sequence $(a_{i})_{i < \omega}$ over $A$ with $a_{0} = a$, $\{\varphi(x;a_{i}) : i < \omega\}$ is inconsistent.
\item For every weak tree Morley sequence $(a_{i})_{i < \omega}$ over $A$ with $a_{0} = a$, $\{\varphi(x;a_{i}) : i < \omega\}$ is inconsistent.  
\end{enumerate}
\end{cor}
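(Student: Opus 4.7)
The plan is to mirror the proof of \cite[Proposition 5.13]{kaplan2017kim}, with Kim's lemma over arbitrary sets (Theorem \ref{KLforanyset}) replacing Kim's lemma over models throughout. The only novel ingredient is making sure that each combinatorial step goes through when the base is a set $A$ rather than a model; fortunately, all the tree constructions (weak spread-outness, $s$-indiscernibility, and the symmetric level-set condition) were set up so as to be sensible over an arbitrary set.

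First I would establish existence: given any $a$, I want a weak tree Morley sequence over $A$ beginning with $a$. Since $a \ind^K_A \emptyset$ trivially, Lemma \ref{treeexistence} provides a weakly spread out $s$-indiscernible tree $(c_\eta)_{\eta \in \mathcal{T}_\alpha}$ over $A$ (for $\alpha$ large) with $c_\nu \equiv_A a$ along every path. A standard extraction (Ramsey plus compactness applied to increasing finite level-sets $w \in [\alpha]^{<\omega}$) then upgrades this tree to a genuine weakly Morley tree by imposing the symmetry condition $(c_\eta)_{\eta \in \mathcal{T}_\alpha \upharpoonright w} \equiv_A (c_\eta)_{\eta \in \mathcal{T}_\alpha \upharpoonright v}$ for $|w| = |v|$, while preserving $s$-indiscernibility and weak spread-outness. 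An $A$-automorphism normalizes so that $c_{\zeta_0} = a$, and the sequence $(c_{\zeta_\beta})_{\beta \in [\alpha]}$ (truncated to order type $\omega$) is the desired weak tree Morley sequence. This yields $(3) \Rightarrow (2)$ at once.

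For $(2) \Rightarrow (1)$, suppose $(a_i)_{i < \omega}$ is a weak tree Morley sequence with $a_0 = a$, arising from a weakly Morley tree $(a_\eta)_{\eta \in \mathcal{T}_\omega}$, and $\{\varphi(x;a_i) : i < \omega\}$ is inconsistent. The remark preceding the corollary gives that $(a_{\eta_i})_{i < \omega}$, where $\eta_i$ has a single $1$ at level $i$, is a Morley sequence over $A$. Note that $\eta_i = \zeta_{i+1} \frown \langle 1 \rangle$ while $\zeta_i = \zeta_{i+1} \frown \langle 0 \rangle$, so weak spread-outness at $\zeta_{i+1}$ gives $a_{\zeta_i} \equiv_A a_{\eta_i}$; in particular $a_{\eta_0} \equiv_A a$. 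The symmetric level-set clause in the definition of weakly Morley tree, combined with $s$-indiscernibility, allows one to argue (by the same combinatorial argument used in \cite{kaplan2017kim}) that $\{\varphi(x;a_{\zeta_i})\}$ is inconsistent if and only if $\{\varphi(x;a_{\eta_i})\}$ is inconsistent, because the two finite index sets $\{\zeta_0,\ldots,\zeta_{n-1}\}$ and $\{\eta_0,\ldots,\eta_{n-1}\}$ both sit inside some $\mathcal{T}_\omega \upharpoonright w$ and can be mapped to one another by the symmetries asserted in the weakly Morley condition. Hence $\{\varphi(x;a_{\eta_i})\}$ is inconsistent along a Morley sequence through $a_{\eta_0} \equiv_A a$, so by Theorem \ref{KLforanyset} and $\equiv_A$-invariance of Kim-dividing, $\varphi(x;a)$ Kim-divides over $A$.

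For $(1) \Rightarrow (3)$, take an arbitrary weak tree Morley sequence $(a_i)$ with $a_0 = a$, form its associated Morley sequence $(a_{\eta_i})$, and invoke Theorem \ref{KLforanyset}: since $\varphi(x;a)$ Kim-divides and $a_{\eta_0} \equiv_A a$, the set $\{\varphi(x; a_{\eta_i}) : i < \omega\}$ must be inconsistent. The transfer step above then returns inconsistency to $\{\varphi(x;a_i)\} = \{\varphi(x;a_{\zeta_i})\}$. I expect the main obstacle to be precisely this transfer step: verifying that the weakly Morley tree symmetry between level-sets of equal size really does equate consistency of $\varphi$ along the all-$0$ spine with consistency along the ``single-$1$'' spine. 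In \cite{kaplan2017kim} this is worked out in a lemma about $\mathcal{T}_\alpha$ that uses $s$-indiscernibility to slide paths across siblings, and because the argument is purely combinatorial about the tree (never appealing to any property of the base being a model), it carries over verbatim to an arbitrary base $A$ under our standing hypotheses of NSOP$_1$ and existence.
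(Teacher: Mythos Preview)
Your proposal is correct and follows exactly the route the paper takes: the paper's entire proof is the sentence ``repeating the proof of Proposition 5.13 from \cite{kaplan2017kim}'' after noting that $(a_{\eta_i})_{i<\omega}$ is a genuine Morley sequence over $A$, and you reproduce this, correctly substituting Theorem~\ref{KLforanyset} for Kim's lemma over models.

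One caution about your sketch of the transfer step. The weakly Morley tree condition compares $(a_\eta)_{\eta \in \mathcal{T}_\alpha \upharpoonright w}$ with $(a_\eta)_{\eta \in \mathcal{T}_\alpha \upharpoonright v}$ for \emph{different} level-sets $w,v$ of equal size, via the canonical tree-isomorphism between the two restrictions; it does \emph{not} provide a symmetry inside a single $\mathcal{T}_\alpha \upharpoonright w$ sending the chain $(\zeta_0,\ldots,\zeta_{n-1})$ to the antichain $(\eta_0,\ldots,\eta_{n-1})$. Indeed these tuples have different quantifier-free $L_{s,\alpha}$-types (comparable versus pairwise incomparable), so neither $s$-indiscernibility nor the level-set clause alone can equate them in one move. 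The actual \cite{kaplan2017kim} argument handles this transfer by a more delicate interleaving of the two ingredients; your diagnosis that it is ``purely combinatorial about the tree'' and hence insensitive to whether the base is a model is nonetheless correct, so deferring to it is legitimate.
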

\begin{cor}\label{symmetry}
$\ind ^K$ satisfies symmetry over arbitrary sets.
\end{cor}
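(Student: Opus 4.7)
The plan is to derive symmetry directly from the two preceding results, Lemma \ref{movingtms} (existence of a weak tree Morley sequence starting with $a$ that is indiscernible over $Ab$) and Corollary \ref{kimslemmafortms} (Kim's lemma for weak tree Morley sequences), together with Proposition \ref{kforkingequalskdividing} (Kim-forking equals Kim-dividing). The argument is the familiar one-sided contradiction: assuming $a\ind^K_A b$ but $b\nind^K_A a$, one builds a weak tree Morley sequence over $A$ starting with $a$ and indiscernible over $Ab$, and derives a contradiction from Kim's lemma applied to the Kim-dividing witness for $b\nind^K_A a$.

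In detail: suppose for contradiction that $a\ind^K_A b$ but $b\nind^K_A a$. Then $\tp(b/Aa)$ Kim-forks over $A$, so there is a formula $\varphi(y;a)\in\tp(b/Aa)$ that Kim-forks over $A$. By Proposition \ref{kforkingequalskdividing} (Kim-forking $=$ Kim-dividing), $\varphi(y;a)$ Kim-divides over $A$. On the other hand, since $a\ind^K_A b$, Lemma \ref{movingtms} provides a weak tree Morley sequence $(a_i)_{i<\omega}$ over $A$ with $a_0=a$ which is $Ab$-indiscernible.

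Now I apply Corollary \ref{kimslemmafortms} to $\varphi(y;a_0)$: since $\varphi(y;a_0)$ Kim-divides over $A$ and $(a_i)_{i<\omega}$ is a weak tree Morley sequence over $A$ with $a_0=a$, the set $\{\varphi(y;a_i):i<\omega\}$ is inconsistent. But $\models\varphi(b;a_0)$, and the $Ab$-indiscernibility of $(a_i)_{i<\omega}$ forces $\models\varphi(b;a_i)$ for every $i<\omega$, so $b$ realizes the whole set, contradicting its inconsistency. Hence $b\ind^K_A a$, and the symmetric direction follows by exchanging the roles of $a$ and $b$.

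There is essentially no obstacle left; all the real work was done in establishing Lemma \ref{movingtms} and Corollary \ref{kimslemmafortms}. The only point that requires a moment's care is that in $b\nind^K_A a$ we only get a Kim-forking formula rather than a Kim-dividing one, but this is precisely what Proposition \ref{kforkingequalskdividing} handles, so the proof amounts to stringing these three results together.
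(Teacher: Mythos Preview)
Your proof is correct and follows essentially the same route as the paper's: assume $a\ind^K_A b$ but $b\nind^K_A a$, invoke Lemma \ref{movingtms} to get an $Ab$-indiscernible weak tree Morley sequence starting at $a$, and contradict Corollary \ref{kimslemmafortms} via a Kim-dividing formula in $\tp(b/Aa)$. The only (harmless) difference is that you explicitly cite Proposition \ref{kforkingequalskdividing} to pass from Kim-forking to Kim-dividing, whereas the paper leaves this step implicit.
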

\begin{proof}
Assume towards a contradiction that $a \ind^K_{A} b$ and $b \nind^K_{A} a$. 
By Lemma \ref{movingtms}, there is a weak tree Morley sequence $\langle a_i\mid i<\omega\rangle$ over $A$ with $a_{0} = a$ 
which is $Ab$-indiscernible.  Since $b \nind^K_{A} a$, there is some 
$\varphi(x;a) \in \text{tp}(b/Aa)$ which Kim-divides over $A$.  By Corollary \ref{kimslemmafortms}, 
$\{\varphi(x;a_{i}) : i < \omega\}$ is inconsistent.  
But $\models \varphi(b;a_{i})$ for all $i < \omega$ by indiscernibility, a contradiction.  
\end{proof} 

\section{3-amalgamation for Lascar strong types}\label{sec_symmetry}
In this section we will show that in any NSOP$_1$  theory satisfying the existence axiom, Lascar strong types have 3-amalgamation.
Throughout this section we assume that $T$ is NSOP$_1$ with existence.

Recall that $a\equiv^L_A b$ (equivalently $\Lstp(a/A)=\Lstp(b/A)$) if Lascar distance $d^L_A(a,b)$ is finite, i.e. if there  is a finite sequence 
$a=a_0,\dots, a_n=b$ such that $a_ia_{i+1}$ begins an $A$-indiscernible sequence for each $i<n$ (iff there  is a finite sequence 
$a=a_0,\dots, a_n=b$ and models $M_i\supseteq A$ such that 
$a_i\equiv_{M_i} a_{i+1}$ for $i<n$).  We say $a,b$ have the same KP-type over $A$  ($a\equiv^{KP}_Ab$) if $E(a,b)$ holds  for any $A$-type-definable bounded equivalence relation 
$E(x,y)$. It follows that $a\equiv^L_Ab$ implies $a\equiv^{KP}_Ab$. We say $T$ is {\em G-compact} if  KP-types are Lascar types, i.e. 
the converse holds for any $A$ and any $a,b$ of arbitrary arity. For more on Lascar types see for example \cite{kim2013simplicity}.

\begin{rem}\label{mdistance}
$a\equiv_A^Lb$ iff $Md_A(a,b)$ is finite, where $Md_A(a,b)$ is defined in the same way as Lascar distance of $a,b$ over $A$, except 
$A$-indiscernible sequences involved in the distance definition are all $A$-Morley sequences: It suffices to observe that for an 
  $A$-indiscernible $(a_0,a_1\in)I$,  there is $A$-Morley $J$ such that $a_0J\equiv_Aa_1J$. But this obviously follows from Fact \ref{seqamal}(1).
\end{rem}



\begin{rem}\label{basicfact}
By our assumptions and the results from previous sections, we have  symmetry: $a\ind^K_A b$ iff $b\ind_A^K a$; Kim-dividing=Kim-forking; and  extension: for any partial type $p(x)$ over $B$ not 
Kim-dividing over a set $A$, there is a completion $q(x)\in S(B)$ of $p$ not Kim-dividing over $A$. In addition, by Proposition \ref{chaincondition} (or by the same proof as for $\ind^K$ over a model in \cite{kaplan2017kim}), it follows that: if $a\ind_A^d bc$ and $b\ind_A^K c$, then $ab\ind_A^Kc$.\\
Moreover, in the same manner as in \cite[Lemma 5.9]{kaplan2017kim} we get that if $\la d_i\mid i<\omega\ra$ is weak tree Morley over $\emptyset$ with $d_i=a_ib_i$, then so are  $\la a_i\mid i<\omega\ra$ 
and $\la d_{n\cdot i} \dots d_{n\cdot i + n-1}\mid i<\omega \ra$ for each $n>0$.

\end{rem}

In the rest for convenience we take $\emptyset$ as the base set by naming the set;  when we say a type Kim-divides, we mean that it does so over $\emptyset$. 
Now, by a proof similar to that of the weak independence theorem in \cite{kaplan2017kim}, we obtain the following.

\begin{lem}\label{scissor}
Let $a\ind^K b$ and $a\ind^K c$. Then there is $e$ such that $ac\equiv ae$, $b\ind e$, and $a\ind^K be$. 
\end{lem}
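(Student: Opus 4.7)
The plan is to construct $e$ via a Morley-sequence amalgamation that combines the chain condition of Remark \ref{basicfact} with a careful extension argument.

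First, using $a\ind^{K}c$ together with Proposition \ref{chaincondition} (applied to the pair with $c$ in the role of $b$), obtain an $a$-indiscernible Morley sequence $\langle c_{i}\rangle_{i<\omega}$ over $\emptyset$ in $\mathrm{tp}(c)$ with $c_{0}=c$ and $a\ind^{K}\langle c_{i}\rangle$. In particular each $c_{i}\equiv_{a}c$.

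Next, invoke the existence axiom, which says $\mathrm{tp}(b/a)$ does not fork over $a$, together with the standard extension property for non-forking, to choose $b^{*}\equiv_{a}b$ with $b^{*}\ind_{a}\langle c_{i}\rangle$. An automorphism over $a$ sending $b^{*}$ back to $b$ transforms $\langle c_{i}\rangle$ into a new $a$-indiscernible Morley sequence $\langle c^{*}_{i}\rangle$ over $\emptyset$ with $c^{*}_{i}\equiv_{a}c$, $a\ind^{K}\langle c^{*}_{i}\rangle$, and now $b\ind_{a}\langle c^{*}_{i}\rangle$.

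Define $e:=c^{*}_{0}$. The first required conclusion, $ae\equiv ac$, is immediate from $e\equiv_{a}c$.

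For $a\ind^{K}be$, which by symmetry is $be\ind^{K}a$, apply the chain condition of Remark \ref{basicfact}: combining $e\ind^{K}a$ (from $e\equiv_{a}c$ and $a\ind^{K}c$) with the non-dividing condition $b\ind^{d}ea$ yields $be\ind^{K}a$. The non-dividing condition is extracted from $b\ind_{a}e$ together with the Morley structure of $\langle c^{*}_{i}\rangle$ over $\emptyset$ by an Erd\H{o}s--Rado extraction (Fact \ref{er}) to an $ab$-indiscernible copy.

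For $b\ind e$ over $\emptyset$, we exploit that $e=c^{*}_{0}$ is the first term of a Morley sequence $\langle c^{*}_{i}\rangle$ which is itself Morley over $\emptyset$ and Kim-independent from $a$. Combining $b\ind_{a}\langle c^{*}_{i}\rangle$ with $a\ind^{K}\langle c^{*}_{i}\rangle$ via the chain condition and symmetry of $\ind^{K}$ upgrades non-forking over $a$ to non-forking over $\emptyset$.

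The main obstacle is exactly this upgrade from non-forking over $a$ (readily available by existence and extension) to non-forking over $\emptyset$, required in both of the last two steps. The transition is not automatic, since non-forking is monotone in the base only in the upward direction; the Morley-sequence structure over $\emptyset$, taken together with the Kim-independence $a\ind^{K}\langle c^{*}_{i}\rangle$, is what makes the transfer go through.
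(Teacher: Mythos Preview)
Your proposal has genuine gaps in both of the final two steps, and the last paragraph essentially concedes this without filling them.

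For $a\ind^{K}be$: you want $b\ind^{d}ea$ over $\emptyset$ to feed into the chain condition, but $b\ind^{d}ea$ concerns indiscernible sequences in $\mathrm{tp}(ea/\emptyset)$, whereas your Morley sequence $\langle c^{*}_{i}\rangle$ lives in $\mathrm{tp}(e/\emptyset)$ with $a$ fixed on the side. Extracting an $ab$-indiscernible copy of $\langle c^{*}_{i}\rangle$ only gives $e\ind^{K}ab$, equivalently $ab\ind^{K}e$; this is not $a\ind^{K}be$, and there is no monotonicity or transitivity available to bridge the two. For $b\ind_{\emptyset}e$: you have only $b\ind_{a}e$, and the chain condition of Remark~\ref{basicfact} produces $\ind^{K}$-conclusions, never $\ind$-conclusions, so the ``upgrade'' you describe does not go through.

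The paper's proof avoids both obstacles by a different construction. First one moves $c$ to some $c'$ with $ac'\equiv ac$ and $a\ind^{K}bc'$ directly; this step uses a Morley sequence in $\mathrm{tp}(a)$ (not in $\mathrm{tp}(c)$), made simultaneously $b$- and $c''$-indiscernible and then $bc''$-indiscernible via Erd\H{o}s--Rado, witnessing $bc'\ind^{K}a$. Then one takes a $\mathbb{Z}$-indexed Morley sequence $\langle b_{i}c_{i}\rangle_{i\in\mathbb{Z}}$ over $\emptyset$ in $\mathrm{tp}(bc')$, applies Proposition~\ref{chaincondition} to make it $a$-indiscernible with $a\ind^{K}I$, and sets $e=c_{-1}$. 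Now $a\ind^{K}be$ follows from $be\subseteq I$ and monotonicity, while $b\ind_{\emptyset}e$ comes for free from the Morley property of the \emph{pair} sequence: $b_{0}c_{0}\ind_{\emptyset}b_{-1}c_{-1}$, hence $b\ind_{\emptyset}e$. The point you are missing is precisely to take a Morley sequence of the pair $bc'$ rather than of $c$ alone.
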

\begin{proof}
We begin by establishing the following:

\medskip

\noindent\textbf{Claim.} There is $c'$ such that $ac'\equiv ac$ and $a\ind^K bc'$.

\medskip

\noindent\emph{Proof of claim.}  Due to symmetry, we have $c\ind^K a$ and  $b\ind^K a$. Hence there is
a Morley sequence $J=\langle a_i| i<\kappa\rangle$ with $a_0=a$  which is $b$-indiscernible, and there is $c''\equiv_a c$ such that $J$ is $c''$-indiscernible.  We can assume $\kappa$ is sufficiently large, so by Fact \ref{er}, 
there is $J'\equiv_{b} J_\omega =\la a_i\mid i<\omega\ra$  such that $J'$ is $bc''$-indiscernible.  Now if $f$ is some 
  $b$-automorphism sending $J'$ to $J_\omega$ then $J_\omega$ is $ac'$-indiscernible where $c'=f(c'')$. Hence 
  $bc'\ind^Ka$, so $a\ind^K bc'$. The claim  is proved.\qed

\medskip

Choose $c'$ as in the claim and take a Morley sequence $I=\langle b_ic_i| i \in \mathbb{Z} \rangle$ with $b_0c_0=bc'$. By Proposition \ref{chaincondition}, we can assume $I$ is $a$-indiscernible and $a\ind^K I$. Let $e=c_{-1}$. 
Then $a\ind^K be$, and since $I$ is a Morley sequence, we have $b \ind e$.  
\end{proof}

Now, we strengthen the lemma.

\begin{lem}\label{strongscissor}
Let $a\ind^K b$ and $a\ind^K c$. Then there is $e$ such that $ac\equiv^L ae$ and $b\ind e$ and $a\ind^K be$. 
\end{lem}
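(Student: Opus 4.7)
My plan is to start from Lemma \ref{scissor}, which produces $e_0$ with $ae_0 \equiv ac$, $b\ind e_0$, and $a\ind^K be_0$, and then upgrade the type equality $ae_0\equiv ac$ to Lascar type equality, without destroying the other two independence conditions. Put differently, the task is to modify $e_0$ within its type $\tp(e_0/a) = \tp(c/a)$ so that it lies in the \emph{same} Lascar class as $c$ over $a$. The Lascar distance is measured by chains of Morley sequences (Remark \ref{mdistance}), so the natural tool is to place $e_0$ and $c$ in a common $a$-indiscernible Morley sequence.

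Concretely, using $a\ind^K c$ and the existence axiom, I would take an $a$-Morley sequence $\langle c_j : j<\omega\rangle$ in $\tp(c/a)$ with $c_0=c$; by $a$-indiscernibility, all $c_j$ satisfy $c_j\equiv^L_a c$. Then I would apply Fact \ref{seqamal}(1) with $I=\langle c_j\rangle$ (which is $a$-Morley in $\tp(c/a)$) and $J=\langle e_0\rangle$ (a one-element $a$-indiscernible sequence in the same type): this yields $\langle c_j'\rangle\equiv_a\langle c_j\rangle$ such that $e_0,c_0',c_1',\ldots$ is $a$-indiscernible, and in particular $e_0 \equiv^L_a c_0'$.

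To bridge $c_0'$ back to $c$ in Lascar type over $a$, I would use an $a$-automorphism $\sigma$ taking $\langle c_j'\rangle$ to $\langle c_j\rangle$ (which exists since $\langle c_j'\rangle \equiv_a \langle c_j\rangle$, in particular $\sigma(c_0')=c_0=c$), and set $e=\sigma(e_0)$. Applying $\sigma$ to the $a$-indiscernible sequence $e_0 c_0' c_1' \ldots$ produces the $a$-indiscernible sequence $e\, c_0 c_1\ldots$, so that $e\equiv^L_a c$, i.e. $ae\equiv^L ac$.

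The main obstacle I expect is that $\sigma$ is only known to fix $a$, not $b$, so a priori we only get $\sigma(b)\ind e$ and $a\ind^K \sigma(b)e$. To remedy this, I would arrange the amalgamation to be compatible with $b$: using the extension property for $\ind^K$ (available since we have symmetry and Kim-forking = Kim-dividing), I would choose the $a$-Morley sequence $\langle c_j\rangle$ so that it is furthermore $\ind^K$-independent from $b$ over $\emptyset$ (equivalently, perform the amalgamation in Fact \ref{seqamal}(1) over $ab$ rather than over $a$); the automorphism $\sigma$ then fixes $b$, and the conditions $b\ind e$ and $a\ind^K be$ transfer from $e_0$ directly. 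The delicate point is checking that these simultaneous requirements on $\langle c_j\rangle$ can be met consistently, which is where the chain condition from Proposition \ref{chaincondition} and the extension property together do the work.
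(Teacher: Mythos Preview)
Your plan has a genuine gap in the final ``fix'' paragraph. The problem is precisely the one you identified: after applying Lemma~\ref{scissor} you only know $e_0\equiv_a c$, not $e_0\equiv_{ab} c$. Your proposed remedy of ``performing the amalgamation in Fact~\ref{seqamal}(1) over $ab$'' would require $e_0$ and $c$ to lie in the \emph{same} type over $ab$, so that both are in the common $p$ to which Fact~\ref{seqamal}(1) applies; but Lemma~\ref{scissor} does not give you this. The alternative phrasing---choosing the $a$-Morley sequence $\langle c_j\rangle$ to be $\ind^K$-independent from $b$ over $\emptyset$---does not help either: Fact~\ref{seqamal}(1) applied over $a$ still only yields $\langle c_j'\rangle\equiv_a\langle c_j\rangle$, so the automorphism $\sigma$ taking $\langle c_j'\rangle$ to $\langle c_j\rangle$ is only guaranteed to fix $a$, regardless of any independence of $\langle c_j\rangle$ from $b$. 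There is no mechanism in your argument that forces $\sigma$ to fix $b$, so the conditions $b\ind e$ and $a\ind^K be$ are lost when you pass from $e_0$ to $e=\sigma(e_0)$.

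The paper avoids this obstacle entirely by inserting a model \emph{before} invoking Lemma~\ref{scissor}, rather than trying to upgrade afterwards. One takes a model $M$ with $M\ind abc$ (existence plus extension), uses Remark~\ref{basicfact} to get $Ma\ind^K b$ and $Ma\ind^K c$, and then applies Lemma~\ref{scissor} with $Ma$ in place of $a$. The output $e$ then satisfies $e\equiv_{Ma} c$, and since any automorphism fixing a model is a Lascar strong automorphism, this immediately gives $ae\equiv^L ac$; the conditions $b\ind e$ and $a\ind^K be$ come for free from the conclusion of Lemma~\ref{scissor} (the latter via monotonicity from $Ma\ind^K be$). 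So the moral is: rather than repairing Lascar equivalence after the fact, enrich the tuple $a$ by an independent model so that the ordinary type equality produced by Lemma~\ref{scissor} \emph{is already} Lascar equivalence.
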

\begin{proof}
By extension and existence for $\ind$, there is a model $M$ such that $M\ind abc$. Then by Remark \ref{basicfact}, we have $Ma\ind^K b$ and 
$Ma\ind^K c.$ Now we apply  Lemma \ref{scissor} to $Ma, b,c$  to obtain $e$ such that $e\equiv_{Ma}c$, $b\ind e$,
and $a\ind^Kbe$. Then $ae\equiv^Lac$ follows.
\end{proof}

\begin{lem} \label{zigzag} (Zig-zag lemma) 
Let $b\ind^K c_0c_1$ and suppose there is a Morley sequence $I=\langle c_i: i<\omega \rangle$. Then  there is a weak tree Morley sequence $\langle b_ic_i |i<\omega\rangle$ such that 
for   all $i$, $b_ic_i\equiv bc_0$, and  for all  $j> i$, $b_ic_j\equiv bc_1$.
\end{lem}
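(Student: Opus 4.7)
The plan is to first construct a single $b_{0}$ with the correct zig-zag relation to $(c_{i})$, then propagate to all $b_{i}$ by indiscernibility and automorphisms, and finally extract a weak tree Morley sequence of pairs via Lemma \ref{movingtms}. The core of the argument is the construction of $b_{0}$, which will go through the independence theorem over a suitably chosen model.

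By the existence axiom, I would first choose a model $M$ with $M\ind b(c_{i})_{i<\omega}$; by base monotonicity of $\ind$ the sequence $(c_{i})$ remains Morley over $M$. Using the $\ind^{d}$-$\ind^{K}$ transfer from Remark \ref{basicfact} (namely $M\ind^{d} bc_{0}c_{1}$ combined with $b\ind^{K} c_{0}c_{1}$ yields $Mb\ind^{K} c_{0}c_{1}$), together with extension for $\ind^{K}$ and symmetry, arrange $b\ind^{K}_{M} c_{0}c_{1}$, mirroring the trick from the proof of Lemma \ref{strongscissor}.

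Next, by Proposition \ref{chaincondition} applied to $b\ind^{K}_{M} c_{1}$ and the $M$-Morley sequence $(c_{i})_{i\geq 1}$, obtain $b'\equiv_{Mc_{1}} b$ such that $(c_{i})_{i\geq 1}$ is $Mb'$-indiscernible and $b'\ind^{K}_{M} (c_{i})_{i\geq 1}$. Then apply the independence theorem over $M$ (Fact \ref{basic kimindep facts}(4)) with the hypotheses $b\ind^{K}_{M} c_{0}$ (monotonicity on the right), $b'\ind^{K}_{M} (c_{i})_{i\geq 1}$, $c_{0}\ind^{K}_{M} (c_{i})_{i\geq 1}$ (from $(c_{i})$ being Morley over $M$, since $\ind\subseteq\ind^{K}$), and $b\equiv_{M} b'$ (types coincide with Lascar strong types over a model), to produce $b_{0}$ with $b_{0}\equiv_{Mc_{0}} b$ and $b_{0}\equiv_{M(c_{i})_{i\geq 1}} b'$. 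This $b_{0}$ satisfies $b_{0}c_{0}\equiv bc_{0}$, $b_{0}c_{j}\equiv bc_{1}$ for all $j\geq 1$ (using $Mb'$-indiscernibility of $(c_{i})_{i\geq 1}$ and $b'\equiv_{Mc_{1}} b$), and $(c_{i})_{i\geq 1}$ is $b_{0}$-indiscernible.

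For the full sequence, by $\emptyset$-indiscernibility of $(c_{i})$ an automorphism sending $(c_{j})_{j\geq 0}$ to $(c_{i+j})_{j\geq 0}$ applied to $b_{0}$ produces $b_{i}$ with the zig-zag pattern at level $i$, and a compactness argument yields a joint realization of all the $b_{i}$'s. To upgrade the resulting configuration to a weak tree Morley sequence, I would apply Lemma \ref{movingtms}: from the Kim-independence of the pair $(b_{0},c_{0})$ from the tail of the sequence coming out of our construction, extract a weak tree Morley sequence of pairs which preserves the pairwise zig-zag types thanks to indiscernibility. The main obstacle is the step of arranging $b\ind^{K}_{M} c_{0}c_{1}$, since $\ind^{K}$ lacks base monotonicity over arbitrary sets in general; this is handled via the $\ind^{d}$-$\ind^{K}$ transfer from Remark \ref{basicfact} combined with extension, just as in the proof of Lemma \ref{strongscissor}.
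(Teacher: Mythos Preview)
There are two genuine gaps in the proposal.

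First, you cannot pass from $b \ind^K c_0c_1$ (over $\emptyset$) to $b \ind^K_M c_0c_1$. The trick from Lemma \ref{strongscissor} that you invoke yields only $Mb \ind^K c_0c_1$, still over the base $\emptyset$; obtaining $b \ind^K_M c_0c_1$ from this would require base monotonicity for $\ind^K$, which fails in NSOP$_1$ theories in general. The same obstruction blocks your claim that $(c_i)$ remains Morley over $M$: from $M \ind (c_i)$ and $c_i \ind c_{<i}$ you cannot deduce $c_i \ind_M c_{<i}$ without symmetry of non-forking, which is not available outside simple theories. Hence neither $b\ind^K_M c_1$ nor $c_0\ind^K_M (c_i)_{i\geq 1}$ is justified, and the hypotheses of the independence theorem over $M$ are not in place; the construction of $b_0$ collapses.

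Second, even granting a $b_0$ with the correct zig-zag relation to $(c_i)$, your appeal to Lemma \ref{movingtms} does not secure the zig-zag condition inside the resulting sequence. That lemma, given $a \ind^K_A d$, produces a weak tree Morley sequence $(a_i)$ with $a_0 = a$ which is $Ad$-indiscernible; it gives no control over the type of the pair $a_0a_1$. Applied with $a = b_0c_0$, the second term $b_1c_1$ of the output is just some realization of $\tp(b_0c_0)$, and there is no reason to have $b_0c_1 \equiv bc_1$. The ``compactness argument'' for joint realization of the separately-constructed $b_i$'s is likewise unexplained. The paper's proof avoids both problems by building the weakly spread-out tree by hand over $\emptyset$: it first produces a Morley sequence $J \equiv I$ with $d_0 = c_0$, $b \ind^K J$, and $d_{>0}$ being $bc_0$-indiscernible (so the zig-zag types are already present at the root), and then iterates an extension-and-branch step so that every path in the resulting $\mathcal T_\alpha$-indexed tree carries the zig-zag pattern. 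The condition is thus preserved through the construction and the shrinking to a weakly Morley tree, rather than hoped for after a black-box application of Lemma \ref{movingtms}.
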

\begin{proof}
We claim the following first.

\medskip

\noindent \textbf{Claim.} There is Morley  $J=\langle d_i| i<\omega\rangle\equiv I$ with $d_0=c_0$ such that
$b\ind^K J$, $bd_1\equiv bc_1$, and $d_{>0}$ is $bc_0$-indiscernible
 (so $bd_i\equiv bc_1$ for $i>0$).

\medskip 

\noindent\emph{Proof of claim.} Since $I_2=\langle c_{2i,2i+1}: i<\omega \rangle$ is  Morley as well, by Proposition \ref{chaincondition}
we can assume that $I_2$ is $b$-indiscernible, and $b\ind^K I_2$. Now we can assume the length of $I_2$ is a
sufficiently large $\kappa$, and consider $J'=\la  c_{2i+1}\mid i<\kappa\ra$. 
Then by Fact \ref{er},  there is $bc_0$-indiscernible $J_1=\la e_i\mid i<\omega\ra$ such that for each $n<\omega$, 
 $e_{\leq n}\equiv_{bc_0}   c_{2i_0+1}\dots c_{2i_n+1} $ 
for some $i_0<\dots <i_n<\kappa$.  Now put $J:=c_0J_1$. Then clearly  the claim is satisfied with this $J$.

\medskip

Now  since $d_{>0}$ is Morley and $e_0d_0$-indiscernible with $b=e_0$,  we see that $e_0d_0\ind^K d_{>0}$, and by extension 
  of $\ind^K$  there is $e_1$ such that 
$e_1d_{\geq 1}\equiv e_0J$, and $e_0d_0\ind^K e_1d_{\geq 1}$. Hence there is Morley $L_0=\la e^i_0d^i_0\mid  i<\omega\ra$ with $e^0_0d^0_0=e_0d_0$ 
such that 
$L_0$ is $e_1d_{\geq 1}$-indiscernible. Moreover again by Ramsey or Fact \ref{er}, there is $L_0e_1d_1$-indiscernible sequence $J_2 $ 
such that 
$d_0d_1J_2\equiv J$, so that $L_0e_1d_1\ind^K J_2$. Then by extension there is $e_2$ such that 
$e_2J_2\equiv e_1d_{\geq 1}(\equiv e_0J)$ and $L_0e_1d_1\ind^K e_2J_2$.  Hence there is Morley 
$L_{1}=\la L^i_0 e^i_{1}d^i_1\mid  i<\omega\ra$ with $L^0_0e^0_{1}d^0_1=L_0e_1d_1$ 
such that  $L_{1}$ is $e_2J_2$-indiscernible. Now let $d'_2$ be the first component of $J_2$. 
Then again by Fact \ref{er} and extension,  there are $e_3$ and $L_{1}e_2d'_2$-indiscernible $J_3 $ such that 
$d_0d_1d'_2J_3\equiv J$ and  $L_{1}e_2d'_2\ind^K e_3J_3(\equiv e_0J)$.  Then there is Morley 
$L_2=\la L^i_1e^i_{2}d^i_2\mid  i<\omega\ra$ with $L^0_1e^0_{2}d^0_2=L_1e_{2}d_2$, which is $e_3J_3$-indiscernible. 
  
\medskip

We sketch the rest of the proof. Notice that $L_0e_1d_1, L_1e_2d'_2, L_2e_3d'_3$ are naturally indexed by $\CT_1,\CT_2,\CT_3$, respectively.  
We iterate this argument for arbitrary large  $\alpha$ to get a  tree $L_\alpha e_{\alpha+1} d'_{\alpha+1}$ indexed by  $\CT_{\alpha+1}$. 
 Notice that each such tree is weakly spread out (see Definition \ref{weaklyspread}) by the way of construction.  
In the process we have kept the following conditions: 
Let $\la (uv)_{\eta_\beta}\mid \beta \leq \alpha+1\ra$ with $\dom(\eta_\beta)=[\beta, \alpha+1)$ be an arbitrary path  in the tree. Then 
 $\la v_{\eta_\beta}\mid \beta \leq \alpha+1\ra$ has the same EM-type as $J\equiv I$. In particular for any 
increasing sequence  $f(i)\leq \alpha+1$ with $i\in\omega$, we have 
$\la v_{\eta_{f(i)}}\mid  i<\omega\ra \equiv I$.  Moreover, for any $\beta$, we have $(uv)_{\eta_\beta}\equiv bc_0$, and for any $\gamma$ 
with $\beta<\gamma\leq \alpha+1$, we have  $u_{\eta_\beta}v_{\eta_\gamma}\equiv bc_1$.  

\medskip 

Consequently, when we shrink the tree into a weakly Morley tree (as in \cite[Section 5]{kaplan2017kim}) we can preserve the above conditions and the resulting 
weakly Morley tree  also meets 
the conditions.    Therefore we can find a weak tree Morley sequence described in this lemma.
\end{proof}

\begin{thm} \label{pre3-ap}
Let $b\ind^K c$, $a\equiv^L a'$, and $a\ind^K b$ with $p(x,b)=\mathrm{tp}(a/b)$, $a'\ind^K c$ with $q(x,c)=\mathrm{tp}(a'/c)$. 
Then $p(x,b)\cup q(x,c)$ does not Kim-divide.
\end{thm}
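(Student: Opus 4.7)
My plan is to produce a weak tree Morley sequence on the $(b,c)$-side together with a single common realization of $p(x,b)\cup q(x,c)$ along every level of the sequence; Corollary \ref{kimslemmafortms} will then imply that no formula implied by $p(x,b)\cup q(x,c)$ Kim-divides over $\emptyset$. The main tools are the scissor lemma (Lemma \ref{strongscissor}), the zig-zag lemma (Lemma \ref{zigzag}), the chain condition (Proposition \ref{chaincondition}), and Kim's lemma for weak tree Morley sequences (Corollary \ref{kimslemmafortms}).

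For the weak tree Morley sequence, I would start from $b\ind^K c$. By symmetry and Proposition \ref{chaincondition} applied to $c\ind^K b$, extend $b$ to a Morley sequence $\la b_i\ra$ over $\emptyset$ which is $c$-indiscernible and satisfies $c\ind^K\la b_i\ra$; in particular $c\ind^K b_0b_1$. The zig-zag lemma then produces a weak tree Morley sequence in which the diagonal and off-diagonal pairs both realize $\tp(bc)$ (using the $c$-indiscernibility of $\la b_i\ra$ to identify the two types in the output of Lemma \ref{zigzag}). Relabeling, we obtain a weak tree Morley sequence $\la (b_i,c_i) : i<\omega\ra$ in $\tp(bc)$ with $(b_0,c_0)=(b,c)$.

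To construct a common witness $a^{\star}$ with $a^{\star}b_i\equiv ab$ and $a^{\star}c_i\equiv a'c$ for all $i$, I proceed by induction on the Morley distance $n=Md_{\emptyset}(a,a')$, which is finite by Remark \ref{mdistance}. In the base case $a=a'$, the scissor lemma (Lemma \ref{strongscissor}) applied to $(a,b,c)$ yields $e$ with $ae\equiv^L ac$, $b\ind e$ (so $b\ind^K e$), and $a\ind^K be$. The element $a$ realizes $\tp(a/be)$, a type which does not Kim-fork (hence does not Kim-divide, by Proposition \ref{kforkingequalskdividing}) over $\emptyset$. Using the finite chain of Morley sequences witnessing $ae\equiv^L ac$ (Remark \ref{mdistance}) together with the indiscernibility and base-change properties of weak tree Morley sequences (via Corollary \ref{kimslemmafortms} and Remark \ref{basicfact}), I would transfer this realization along the weak tree Morley sequence to obtain the desired $a^{\star}$. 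In the inductive step $n>0$, I pick an intermediate $a''$ with $Md_{\emptyset}(a,a'')=1$ and $Md_{\emptyset}(a'',a')=n-1$, use Proposition \ref{chaincondition} to shift the independences into the right configuration involving $a''$, and chain the inductive hypothesis with the base case.

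The main obstacle is the base case construction of the uniform witness $a^{\star}$: the scissor lemma provides a realization only for the shifted configuration $(b,e)$ rather than the target $(b,c)$, and bridging the Lascar equivalence $ae\equiv^L ac$ while maintaining compatibility with \emph{all} levels of the weak tree Morley sequence simultaneously is delicate. This requires a careful interplay between the zig-zag structure, the chain of Morley sequences from Remark \ref{mdistance}, and the fact that weak tree Morley sequences are rigid enough for Corollary \ref{kimslemmafortms} to apply uniformly across the construction.
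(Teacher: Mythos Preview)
Your induction is on the wrong variable, and the base case is not actually a reduction. In the case $a=a'$, Lemma \ref{strongscissor} gives $e$ with $ae\equiv^L ac$, $b\ind e$, and $a\ind^K be$; this shows only that $p(x,b)\cup q(x,e)$ does not Kim-divide. Since $\tp(be)$ need not equal $\tp(bc)$, a weak tree Morley sequence in $\tp(be)$ witnesses nothing about $(b,c)$ via Corollary \ref{kimslemmafortms}. You are left with the task of showing that $p(x,b)\cup q(x,c)$ does not Kim-divide given that $p(x,b)\cup q(x,e)$ does not and $e\equiv^L c$ --- but this is the entire content of the theorem once the Lascar equivalence has been pushed off the $a$-side, and your proposed ``transfer along the weak tree Morley sequence'' does not explain how to accomplish it. The inductive step has a separate defect: an intermediate $a''$ on a Morley chain from $a$ to $a'$ need not satisfy $a''\ind^K b$ or $a''\ind^K c$, so the hypotheses of the theorem are unavailable for the subproblems, and Proposition \ref{chaincondition} does not produce these independences from nothing.

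The paper makes the move to the $c$-side \emph{first} and then inducts there. Since $a\equiv^L a'$, there is $c''\equiv^L c$ with $ac''\equiv a'c$; then the \emph{same} $a$ realizes $p(x,b)\cup q(x,c'')$. Two applications of Lemma \ref{strongscissor} arrange first $a\ind^K bc''$ (so $p(x,b)\cup q(x,c'')$ does not Kim-divide) and then $c'$ with $bc'\equiv bc''$, $c'\equiv^L c$, and $b\ind^K cc'$. Remark \ref{mdistance} gives a Morley chain $c'=c_0,\ldots,c_n=c$, and extension gives $b\ind^K c_{\leq n}$. The induction is now on $i$: given that $p(x,b)\cup q(x,c_i)$ does not Kim-divide, Lemma \ref{zigzag} (applied with $b\ind^K c_ic_{i+1}$) produces a weak tree Morley sequence $\la b'_kc'_k\ra$ whose diagonal pairs realize $\tp(bc_i)$ and whose off-diagonal pairs realize $\tp(bc_{i+1})$; consistency along the diagonal (Corollary \ref{kimslemmafortms}) yields consistency along the subsequence $\la b'_{2k}c'_{2k+1}\ra$, which is weak tree Morley in $\tp(bc_{i+1})$ by Remark \ref{basicfact}, and Corollary \ref{kimslemmafortms} again closes the step.
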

\begin{proof}
Since $a\equiv^L a'$, there is an automorphism $f$ fixing all the Lascar types over $\emptyset$ sending $a$ to $a'$.  Then clearly, $a\models p(x,b)\cup q(x,c'')$ 
where $c''=f(c)$, so $c\equiv^L c''$. In particular, $a\ind^K c''$. Due to Lemma 5.4, we can assume 
$b\ind c''$ and $a\ind^K bc''$, so $p(x,b)\cup q(x,c'')$ does not Kim-divide. 
Now applying 5.4 again to $b,c,c''$  we can find $c'$ such that $bc''\equiv bc'$ and $c''\equiv^Lc'(\equiv^L c)$ 
such that   $b\ind^K cc'$ and $c\ind c'$. Now as pointed out in Remark \ref{mdistance}, there are $c'=c_0,c_1,\ldots,c_n=c$ such that
each pair $c_ic_{i+1}$ starts a Morley sequence in $\tp(c_i)$. Moreover, due to extension of $\ind^K$ applied to $b\ind^K cc'$, we can assume $b\ind^K c_{\leq n}$.
Recall that $p(x,b)\cup q(x,c_0)$ does not Kim-divide \ (*), and we shall show that $p(x,b)\cup q(x,c_1)$ does not Kim-divide. (Then the same iterative argument shows that
each of  $p(x,b)\cup q(x,c_2),\ldots, p(x,b)\cup q(x,c_n)$ does not Kim-divide either, as wanted.)

Now due to Lemma \ref{zigzag}, there is a weak tree Morley sequence $I=\langle b'_ic'_i |i<\omega\rangle$ such that 
for any $i$, $b'_ic'_i\equiv bc_0$, and  for  $j> i$, $b'_ic'_j\equiv bc_1$. Then due to (*) and Corollary \ref{kimslemmafortms}, 
$$\bigcup_{i<\omega} p(x,b'_i)\cup q(x, c'_i) $$
is consistent. In particular, $\bigcup_{i<\omega} p(x,b'_{2i})\cup q(x, c'_{2i+1}) $
is consistent. Since $\langle b'_{2i}c'_{2i+1} |i<\omega\rangle$ is weak tree Morley as well (by Remark \ref{basicfact}), we have proved 
that $p(x,b)\cup q(x,c_1)$ does not Kim-divide.
\end{proof}

\begin{lem}\label{5.7} Assume $b\ind^Kc$. Then for any $a$ there is $e\equiv^L_ba$ such that
$eb\ind^K c$. Moreover, for any $d$ there is $b'\equiv^L_cb$ such that $b'\ind^K cd$. 
\end{lem}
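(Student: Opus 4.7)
The two assertions are equivalent via symmetry of $\ind^K$ (Corollary~\ref{symmetry}) and a Lascar automorphism argument: applying the first statement with the roles of $(b,c,a)$ replaced by $(c,b,d)$ produces $e \equiv^L_c d$ with $ec \ind^K b$, hence $b \ind^K ce$ by symmetry; composing with a Lascar automorphism over $c$ that sends $e \to d$ then yields $b' \equiv^L_c b$ with $b' \ind^K cd$. So I focus on the first statement.

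The key idea is to pass through a model $M$ containing $b$, in order to upgrade $\equiv_M$ to $\equiv^L_M$ and hence to $\equiv^L_b$. First, using the extension property for Kim-forking (Remark~\ref{basicfact}) together with symmetry, produce a model $M \supseteq b$ with $M \ind^K c$: starting from $c \ind^K b$, extend $\tp(c/b)$ to a complete non-Kim-forking type over $bN$ for a chosen model $N \supseteq b$; a realization $c^* \equiv_b c$ satisfies $c^* \ind^K bN$, and a $b$-automorphism sending $c^* \to c$ carries $N$ to the desired $M$. By iterating existence, arrange in addition that $a \ind M$ over $\emptyset$ (when $\tp(a/b)$ forks over $\emptyset$ this requires replacing $a$ by another representative of $\Lstp(a/b)$: from a $b$-Morley sequence in $\tp(a/b)$ one extracts via Erd\H{o}s-Rado an $M$-indiscernible subsequence, and a sufficiently late element lies in $\Lstp(a/b)$ and is non-forking-independent from $M$).

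Now, by extension for $\ind$ over $M$, choose $e \equiv_M a$ with $e \ind_M c$. Since $a \ind M$ transfers to $e \ind M$ via the type equality, right-transitivity of $\ind$ gives $e \ind Mc$ and hence $e \ind^d Mc$. The fact that $M$ is a model containing $b$ provides $e \equiv^L_M a$ from $e \equiv_M a$, which restricts to $e \equiv^L_b a$. Applying the chain rule from Remark~\ref{basicfact} to $e \ind^d Mc$ and $M \ind^K c$ then yields $eM \ind^K c$; left monotonicity (since $b \subseteq M$) produces $eb \ind^K c$, as required.

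The main technical difficulty is the simultaneous choice of $M$ with all three properties ($M \supseteq b$, $M \ind^K c$, $a \ind M$), particularly when $\tp(a/b)$ forks over $\emptyset$: here one must use the Morley-sequence-in-$\Lstp(a/b)$ replacement trick to secure $a \ind M$ without sacrificing the Lascar strong type over $b$. The rest of the argument is a bookkeeping exercise in chain rules and base-change, but this replacement step is where the interaction between Lascar strong types over a set and non-forking independence genuinely enters.
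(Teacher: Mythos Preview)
Your reduction of the second assertion to the first via symmetry is fine, but the proof of the first assertion contains a genuine gap. The ``replacement trick'' you invoke to arrange $a \ind M$ cannot succeed when $\tp(a/b)$ forks over $\emptyset$: any $a' \equiv^L_b a$ in particular satisfies $\tp(a'/b) = \tp(a/b)$, which forks over $\emptyset$, so $a' \nind b$ and hence $a' \nind M$ for every $M \supseteq b$. No Morley sequence or Erd\H{o}s--Rado extraction can circumvent this, since all terms of a $b$-Morley sequence in $\tp(a/b)$ realize the same forking type over $b$. Thus the step ``arrange in addition that $a \ind M$'' is impossible in general, and with it the subsequent use of right-transitivity of $\ind$ collapses.

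The paper's proof avoids this entirely by never invoking $\ind$ or the chain rule from Remark~\ref{basicfact}. It uses \emph{extension for $\ind^K$} twice: first to find a model $M \supseteq b$ with $M \ind^K c$ (as you do), and then, crucially, to extend $\tp(c/M)$ to a non-Kim-forking type over $Ma$ directly. Pulling back by an $M$-automorphism yields $e \equiv_M a$ with $eM \ind^K c$; since $M$ is a model containing $b$, $e \equiv_M a$ already gives $e \equiv^L_b a$, and monotonicity gives $eb \ind^K c$. No condition of the form $a \ind M$ is needed, and the detour through forking independence is unnecessary.
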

\begin{proof} By extension of $\ind^K$, there is $M$ containing $b$ such that $M\ind^K c$. 
  Then by extension again there is $e\equiv_Ma$ (so $e\equiv^L_b a$), such that $eM\ind^Kc$, so $eb\ind^K c$. 
Similarly, there is a model $c\in N$ such that $b\ind^K N$. Then by extension there is $b'\equiv_N b$ (so $b'\equiv^L_c b$), 
such that $b'\ind^K Nd$, hence $b'\ind^K cd$. \end{proof}
  
\begin{thm}\label{3amalg} {\em ($3$-amalgamation of Lstp for Kim-dividing)} 
  {\em Let $b\ind^K c$, $a\equiv^L a'$, and $a\ind^K b$, $a'\ind^K c$. 
Then there is $a''$ such that $a''\ind^K bc$ and $a''\equiv^L_ba$, $a''\equiv^L_ca'$.} 
\end{thm}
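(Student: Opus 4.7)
The plan is to deduce Theorem~\ref{3amalg} from the independence theorem over models (Fact~\ref{basic kimindep facts}(4)) by constructing a suitably generic model $M$ over which the hypotheses of the IT hold, and then descending.

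\textbf{Step 1 (Construction of $M$).} Using the Lascar chain characterization of $a\equiv^L a'$, the existence axiom, and Fact~\ref{er}, I would construct a model $M$ such that $a\equiv_M a'$ and $M\ind aa'bc$ (forking-independence over $\emptyset$). For each step of a Lascar chain $a=a_0,\ldots,a_n=a'$ (with $a_ia_{i+1}$ starting a $\emptyset$-indiscernible sequence $(a_j^{(i)})_j$), Ramsey extraction applied to a long extension of $(a_j^{(i)})_j$, followed by forking extension and automorphism re-alignment, yields a model over which that sequence is indiscernible and which is forking-free from the data. Iterating/amalgamating across the finite chain and using that Lascar type over a model coincides with type yields a single $M$ with the required properties.

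\textbf{Step 2 (Lifting Kim-independences).} From $M\ind aa'bc$ I would lift the hypotheses to obtain $b\ind^K_M c$, $a\ind^K_M b$, and $a'\ind^K_M c$. The key transfer is: when $M\ind X$, any Morley sequence over $M$ in $\tp(X/M)$ is also a Morley sequence over $\emptyset$ (by transitivity of forking applied to $x_i\ind_M x_{<i}$ and $M\ind x_{<i}$, iterated), which allows Kim's lemma (Theorem~\ref{KLforanyset}) to transfer Kim-dividing between the bases $\emptyset$ and $M$.

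\textbf{Step 3 (Apply IT over $M$ and descend).} Apply Fact~\ref{basic kimindep facts}(4) over $M$ with $A=b$, $B=c$, amalgamating $a,a'$ (which satisfy $a\equiv_M a'$ from Step 1), obtaining $a''$ with $a''\ind^K_M bc$, $a''\equiv_{Mb} a$, and $a''\equiv_{Mc} a'$. Since $M$ is a model, $\equiv_{Mb}$ implies $\equiv^L_{Mb}$, which restricts to $\equiv^L_b$; similarly $a''\equiv^L_c a'$. Finally, $a''\ind^K bc$ over $\emptyset$ follows from $a''\ind^K_M bc$ and $M\ind bc$ via the same Kim's-lemma transfer used in Step 2, in reverse.

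\textbf{Main obstacle.} Step 1 is the technical heart: simultaneously arranging $a\equiv_M a'$ and the genericity $M\ind aa'bc$ along a possibly multi-step Lascar chain requires careful iterated amalgamation while preserving forking-independence from the data at every stage. The transfer in Step 2 (a form of weak base monotonicity for $\ind^K$ under genericity of $M$) is also delicate, since $\ind^K$ does not enjoy base monotonicity in general and so the argument must genuinely exploit $M\ind bc$, $M\ind ab$, and $M\ind a'c$.
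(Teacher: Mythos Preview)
There is a genuine gap in Step~1. From $a\equiv^L a'$ you only know that $a$ and $a'$ are connected by a finite chain $a=a_0,\dots,a_n=a'$ with $a_ia_{i+1}$ starting a $\emptyset$-indiscernible sequence $I_i$; obtaining a \emph{single} model $M$ with $a\equiv_M a'$ would force the Lascar distance of $a,a'$ to be at most~$2$, which is not given (and is in fact a nontrivial consequence of the very theorem you are trying to prove, via G-compactness, and even G-compactness does not obviously give diameter $\le 2$). Your ``iterating/amalgamating across the finite chain'' does not address this: the sequences $I_0,\dots,I_{n-1}$ are linked at the nodes $a_i$, so you cannot apply Ramsey to each $I_i$ separately to make it $M$-indiscernible without destroying the linkage, and there is no mechanism for merging the per-step models $M_i$ into one $M$ over which all the $I_i$ remain indiscernible. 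This is exactly why the paper never attempts to produce such an $M$: instead it proves a ``pre-amalgamation'' statement (Theorem~\ref{pre3-ap}) by induction along the Lascar chain, using the zig-zag lemma (Lemma~\ref{zigzag}) to pass from one link to the next, and only then upgrades to full Lascar-type amalgamation via the witness trick in Lemma~\ref{5.7}.

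Step~2 is also not as written. Your transfer ``$M\ind X$ implies every $M$-Morley sequence in $\tp(X/M)$ is Morley over $\emptyset$'' needs $M\ind x_{<i}$ for every $i$, not just $M\ind x_0$; the inductive step you sketch (from $x_i\ind_M x_{<i}$ and $M\ind x_{<i}$) yields $Mx_i\ind x_{<i}$ by left-transitivity, not $M\ind x_{\le i}$, and without symmetry of forking you cannot flip sides. Even granting a carefully constructed sequence, to lift $b\ind^K c$ to $b\ind^K_M c$ you must handle formulas $\varphi(x;c,m)\in\tp(b/Mc)$ with parameters $m\in M$; knowing $b\ind^K c$ over $\emptyset$ says nothing about such formulas directly, and the step from $b\ind^K cm$ (which one can get from Remark~\ref{basicfact}) to non-Kim-dividing of $\varphi(x;c,m)$ \emph{over $M$} is precisely a base-change for $\ind^K$, which fails in general. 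So the ``descent/ascent'' you need is not the routine Kim's-lemma transfer you describe.
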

\begin{proof}  Due to Lemma \ref{5.7}, there is $e\equiv^L_ba$ such that $eb\ind^Kc$, and there is $e'\equiv^L_ca'$ 
  such that $eb\ind^K ce'$.  Again by Lemma \ref{5.7}, there are  $a_0\equiv^L_b a$ and $a'_0\equiv^L_c a'$  such that $a_0\ind^K eb$ and  
  $a'_0\ind^K ce'$.  Note that  $ a'_0\equiv^L a'\equiv^L a\equiv^L a_0$. Hence by Theorem \ref{pre3-ap}, there is 
  $a''\models \tp(a_0/eb)\cup \tp(a'_0/ce')$ such that $a''\ind^K ebce'$. Moreover since 
  $a''\equiv_{eb} a_0$  and  $e\equiv^L_b a \equiv^L_b a_0$, it follows that $e\equiv_b^L a_0$ and $a''\equiv^L_b e\equiv^L_b a$. 
   Similarly we have $a''\equiv^L_c a'$ as wanted. \end{proof}
  
  Now by the same argument  using $3$-amalgamation as in simple theories (see \cite{kim2013simplicity}), 
   for any $a\equiv^L b$ with $a\ind^K b$ there is an indiscernible sequence starting with $a,b$. Hence 
  we conclude the following. 
  
\begin{cor} {\it  Assume $T$ is NSOP$_1$ with existence. Then $T$ is G-compact.  } 
\end{cor}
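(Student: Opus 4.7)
The plan is the two-step reduction the paper describes just before the corollary: first prove the auxiliary statement that, whenever $a\equiv^L b$ and $a\ind^K b$, the pair $(a,b)$ begins an infinite indiscernible sequence; then deduce G-compactness from it by the standard simple-theoretic argument cited in \cite{kim2013simplicity}.

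Step 1. To obtain the indiscernible sequence, I would construct by induction a sequence $(a_0,\ldots,a_n)$ with $a_0=a$, $a_1=b$, maintaining two invariants: $(a_i,a_j)\equiv^L(a,b)$ for every $i<j\le n$, and $a_j\ind^K a_{<j}$ for every $j\le n$. For the inductive step, the new element $a_{n+1}$ must simultaneously realize one Lascar-type condition over $a_{<n}$ (matching $a_n$'s Lascar type there) and another Lascar-type condition relating it to $a_n$ (to secure $(a_n,a_{n+1})\equiv^L(a,b)$). I would amalgamate these two prescriptions via Theorem \ref{3amalg}, using the inductive Kim-independence invariants together with existence and extension for $\ind^K$ to supply the required independent witnesses and verify its hypotheses. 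The output is an element $a_{n+1}$ Kim-independent from $a_{\le n}$ and realizing both target Lascar types. Extending the construction to arbitrary length and applying Fact \ref{er} to extract an indiscernible subsequence (which preserves the pairwise Lascar-type data) yields an infinite indiscernible sequence starting with $(a,b)$.

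Step 2. With the auxiliary statement of Step 1 available, G-compactness follows as in the proof for simple theories in \cite{kim2013simplicity}: given $a\equiv^{KP}b$, one uses the existence axiom and extension for $\ind^K$ to pass to Kim-independent representatives in the Lascar classes of $a$ and $b$, applies Step 1 to build indiscernible sequences linking them, and invokes further applications of Theorem \ref{3amalg} to bridge the endpoints. This shows that Lascar distance is bounded inside each KP-class, so $\equiv^L$ is type-definable; by the minimality of $\equiv^{KP}$ among type-definable bounded equivalence relations coarser than $\equiv^L$, the two coincide.

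The principal obstacle is the inductive step of Step 1, where the Kim-independence invariants must be tracked carefully so that the hypotheses of Theorem \ref{3amalg} are met at each stage and the amalgamated element realizes both Lascar-type prescriptions simultaneously. Step 2 is essentially a transcription of the standard simple-theoretic argument, which the paper explicitly asserts carries over in the NSOP$_1$-with-existence setting.
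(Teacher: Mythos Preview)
Your overall plan matches the paper's exactly: it too reduces G-compactness to the auxiliary statement (your Step 1) and cites the same simple-theoretic argument from \cite{kim2013simplicity}. Your outline of Step 1 is sound; the invariants $(a_i,a_j)\equiv^L(a,b)$ and $a_j\ind^K a_{<j}$ can indeed be propagated by a single application of Theorem \ref{3amalg} at each stage, and Erd\H{o}s--Rado then yields an indiscernible sequence whose first pair has the type of $(a,b)$.

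There is, however, a genuine slip in your Step 2. You start from $a\equiv^{KP}b$ and attempt to bound $d^L(a,b)$, but both Step 1 and Theorem \ref{3amalg} take $\equiv^L$ as a \emph{hypothesis}: Step 1 needs its two inputs to be Lascar-equivalent, and Theorem \ref{3amalg} requires $a\equiv^L a'$. If $a$ and $b$ lie in distinct Lascar classes within the same KP-class, nothing you have proved lets you ``bridge'' them. The correct argument runs in the other direction: begin with $a\equiv^L b$, use existence and extension (for instance Lemma \ref{5.7} with trivial left-hand side) to find $c\equiv^L a$ with $c\ind^K ab$, and apply Step 1 twice---once to $(c,a)$ and once to $(c,b)$, the latter using $c\equiv^L a\equiv^L b$---to obtain $d^L(a,b)\leq 2$. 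This uniform bound makes $\equiv^L$ type-definable, whence it coincides with $\equiv^{KP}$. Your concluding sentence shows you know this is the target; only the starting hypothesis of the deduction needs to be corrected.
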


\section{The skew tree property}

Below, we introduce a combinatorial property STP (skew tree property), whose absence turns out to be equivalent to NSOP$_1$ under the assumption of 
existence.
%
  %
%
  %
  %
%

\begin{defn}
The formula $\varphi(x;y)$ has the \emph{skew tree property} if there are indiscernible sequences $\langle a_{i} : i < \omega + \omega \rangle$, $\langle b_{i} : i < \omega \rangle$ satisfying the following properties:
\begin{itemize}
\item $\{\varphi(x;a_{i}) : i < \omega + \omega\}$ is consistent
\item $\{\varphi(x;b_{i}) : i < \omega \}$ is inconsistent
\item $\langle b_{i} : i < \omega \rangle$ is $a_{<\omega}$-indiscernible
\item $\langle a_{\omega + i} : i < \omega \rangle$ is $a_{<\omega}b_{>0}$-indiscernible
\item $b_{0} = a_{\omega}$
\end{itemize}
The complete theory $T$ has the skew tree property if some formula does modulo $T$.
\end{defn}

\begin{lem} \label{between simple and NSOP1}
\begin{enumerate}
\item If $\varphi$ has SOP$_{1}$ then $\varphi$ has the skew tree property
\item If $\varphi$ has the skew tree property then $\varphi$ has the tree property.
\end{enumerate}
\end{lem}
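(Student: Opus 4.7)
The plan is to handle the two assertions separately.

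For part (1), starting from SOP$_{1}$, I would invoke Fact~\ref{arrayequivalent} to produce an array $(c_{i,j})_{i<\omega,\,j<2}$ with $c_{i,0}\equiv_{\overline{c}_{<i}}c_{i,1}$, $\{\varphi(x;c_{i,0}) : i<\omega\}$ consistent, and $\{\varphi(x;c_{i,1}) : i<\omega\}$ $k$-inconsistent; then extract (Fact~\ref{er}) an indiscernible sequence of pairs retaining the array property and stretch to length $\omega+\omega$ by compactness.  Setting $a_{i}:=c_{i,0}$ for $i<\omega+\omega$ supplies the $a$-part of the skew tree configuration.  Using $c_{\omega,0}\equiv_{\overline{c}_{<\omega}}c_{\omega,1}$, I would choose an automorphism $\sigma$ fixing $\overline{c}_{<\omega}$ with $\sigma(c_{\omega,1})=c_{\omega,0}$ and set $b_{i}:=\sigma(c_{\omega+i,1})$, so that $b_{0}=a_{\omega}$, $\langle b_{i}\rangle$ is indiscernible with $\{\varphi(x;b_{i})\}$ $k$-inconsistent, and since $\sigma$ fixes $a_{<\omega}\subseteq\overline{c}_{<\omega}$ the sequence $\langle b_{i}\rangle$ is $a_{<\omega}$-indiscernible.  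The main obstacle is the remaining condition that $\langle a_{\omega+i}\rangle$ be $a_{<\omega}b_{>0}$-indiscernible: a naive $\sigma$ need not produce this, so I would use a joint compactness argument, embedding the data into a larger $s$-indiscernible tree configuration via Fact~\ref{modeling} and choosing the configuration so that the desired compatibility holds by Ramsey-style extraction on the tree indices.

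For part (2), the plan is to iterate the skew tree configuration via automorphisms, producing depth-$n$ TP-type trees for each $n$, and then appeal to compactness.  Because $\langle b_{i}\rangle$ is $a_{<\omega}$-indiscernible and $b_{0}=a_{\omega}$, for each $i$ there is an $a_{<\omega}$-automorphism $\rho^{1}_{i}$ with $\rho^{1}_{i}(a_{\omega})=b_{i}$; transporting the skew tree configuration under $\rho^{1}_{i}$ produces a fresh skew tree configuration rooted at $b_{i}$.  Iterating this $n$ times, and using compactness to handle joint compatibility between iterations, yields for every $n$ an ``$n$-fold nested'' configuration: an indiscernible consistent $a$-sequence of length $\omega\cdot(n+1)$ carrying $k$-inconsistent branchings $\langle b^{l}_{i}\rangle$ at each position $l\omega$ with $b^{l}_{0}=a_{l\omega}$, and the appropriate indiscernibility conditions propagated at each level.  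From such a depth-$n$ configuration I define the tree by $d_{\eta}:=\rho^{1}_{\eta_{1}}\circ\cdots\circ\rho^{|\eta|}_{\eta_{|\eta|}}(a_{|\eta|\cdot\omega})$, where each $\rho^{l}_{j}$ is an $a_{<l\omega}$-automorphism sending $a_{l\omega}$ to $b^{l}_{j}$.  Sibling $k$-inconsistency at level $l$ follows because applying the inverses $(\rho^{1}_{\eta_{1}}\circ\cdots\circ\rho^{l-1}_{\eta_{l-1}})^{-1}$ reduces the sibling set to $\{b^{l}_{j}\}_{j<\omega}$, while path consistency follows because the successive automorphisms fix earlier trunk segments and reduce any path-set to a subset of the original consistent $a$-path $\{a_{0},a_{\omega},\dots,a_{|\eta|\omega}\}$.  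Compactness upgrades this to a depth-$\omega$ tree indexed by $\omega^{<\omega}$ witnessing TP.  The main difficulty is ensuring path consistency at each iteration, which is precisely what the second indiscernibility condition in the STP definition (that $\langle a_{\omega+i}\rangle$ is $a_{<\omega}b_{>0}$-indiscernible) is designed to guarantee: it is what lets the $a$-continuation pass through the $b$-branchings undisturbed when iterated.
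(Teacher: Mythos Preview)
For part (1), you correctly identify the obstruction: with your setup the automorphism $\sigma$ fixes only $\overline{c}_{<\omega}$, so there is no reason for $\langle a_{\omega+i}\rangle = \langle c_{\omega+i,0}\rangle$ to be indiscernible over the $\sigma$-images $b_{>0}$. Your proposed repair via an $s$-indiscernible tree extraction is vague, and it is unclear what tree structure you have in mind or why Fact~\ref{modeling} would deliver the specific cross-indiscernibility you need. The paper's solution is simpler and avoids the difficulty entirely: take the array of length $\omega+\omega+\omega$ rather than $\omega+\omega$, and reverse the roles of $a_{\geq\omega}$ and $b$ with respect to the automorphism. Concretely, set $a_i = c_{i,0}$ for $i<\omega$ (first block), let the $b$'s be the middle block $c_{\omega+i,1}$, choose $\sigma$ fixing $\overline{c}_{<\omega+\omega}$ and moving $c_{\omega+\omega,0}$ to $c_{\omega+\omega,1}$, and define $a_{\omega+i} = \sigma(c_{\omega+\omega+i,0})$. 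Since $\sigma$ now fixes both $a_{<\omega}$ and all the $b$'s, the indiscernibility of $\langle a_{\omega+i}\rangle$ over $a_{<\omega}b_{>0}$ is immediate from the indiscernibility of the third block over the first two. The point is that it is the tail of the $a$-sequence, not the $b$-sequence, that should be produced by the automorphism.

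For part (2), the paper gives no argument beyond citing \cite[Lemma~2.3]{kim2001simplicity}, which is the classical fact that if $\varphi(x;a_0)$ divides over $A$ while $\{\varphi(x;a_i)\}$ is consistent along some $A$-indiscernible sequence, then $\varphi$ has the tree property. That hypothesis is available from the skew tree data using only the indiscernibility of $\langle a_i\rangle$ and conditions (1)--(3),(5): $\langle b_i\rangle$ witnesses that $\varphi(x;a_\omega)$ divides over $a_{<\omega}$, and $\langle a_{\omega+i}\rangle$ is an $a_{<\omega}$-indiscernible sequence along which $\varphi$ is consistent. Your iterative construction is essentially an attempt to re-prove this classical lemma, and the path-consistency and sibling-inconsistency checks via compositions $\rho^{1}_{\eta_1}\circ\cdots\circ\rho^{m}_{\eta_m}$ are fine once the ``$n$-fold nested'' configuration exists. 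The gap is in producing that configuration: you need branchings $\langle b^{l}_i\rangle$ that are $a_{<l\omega}$-indiscernible, not merely indiscernible over a shifted copy of $a_{<\omega}$, and your phrase ``using compactness to handle joint compatibility'' does not explain how this is achieved. Note also that the fourth STP condition, which you single out as the key to path consistency, is not actually used in the paper's route to (2).
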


\begin{proof}
(1)  Suppose $\varphi$ has SOP$_{1}$.  Then, by Fact \ref{arrayequivalent}(3) there is an array $(c_{i,0}, c_{i,1})_{i \in \omega+\omega+\omega}$ so that 
\begin{itemize}
\item $\{\varphi(x;c_{i,0}) : i <\omega+\omega+\omega\}$ is consistent
\item $\{\varphi(x;c_{i,1}) : i < \omega+\omega+\omega\}$ is inconsistent
\item The sequence $(\overline{c}_{i})_{i < \omega + \omega + \omega}$ is indiscernible and, for all $i \in \omega+\omega+\omega$, $c_{i,0} \equiv_{\overline{c}_{<i}} c_{i,1}$.
\end{itemize}
As $c_{\omega+\omega,0} \equiv_{\overline{c}_{<\omega+\omega}} c_{\omega+\omega,1}$, there is $\sigma \in \text{Aut}(\mathbb{M}/\overline{c}_{<\omega+\omega})$ so that $\sigma(c_{\omega+\omega, 0}) = c_{\omega+\omega,1}$.  Let $\langle a_{i} : i < \omega + \omega \rangle$ be defined by $a_{i} = c_{i,0}$ and $a_{\omega + i} = \sigma(c_{\omega + \omega +i,0})$ for $i < \omega$.  Let $\langle b_{i} : i < \omega+1\rangle$ be defined by $b_{i} = c_{\omega + i,1}$ for all $i < \omega + 1$.  In particular $b_{\omega} = c_{\omega+\omega,1}$.  We have $\langle a_{i} : i < \omega + \omega \rangle$ is an indiscernible sequence, since $\langle a_{i} : i < \omega + \omega \rangle = \sigma (\langle c_{i,0} : i < \omega \rangle \frown \langle c_{\omega + \omega + i,0} : i < \omega \rangle)$.  Similarly, as $\langle c_{\omega +\omega +i,0} : i < \omega \rangle$ is $\{c_{i,0},c_{\omega + i,1} : i < \omega\}$-indiscernible and $\sigma \in \text{Aut}(\mathbb{M}/\overline{c}_{<\omega+\omega})$, $\langle a_{\omega + i} : i < \omega \rangle$ is $a_{<\omega}b_{<\omega}$-indiscernible.  As $\langle c_{\omega + i,1} : i < \omega \rangle$ is $\{c_{i,0} : i < \omega\}$-indiscernible, $\langle b_{i} : i < \omega+1 \rangle$ is $a_{<\omega}$-indiscernible.  Finally, we know $\{\varphi(x;a_{i}) : i < \omega+\omega\}$ is consistent,  $\{\varphi(x;b_{i}) : i < \omega + 1\}$ is inconsistent, and $a_{\omega} = b_{\omega}$.  This clearly implies the skew tree property.  

(2)  Immediate from \cite[Lemma 2.3]{kim2001simplicity}.
\end{proof}

Define an independence relation $\ind^{*}$ by:  $a \ind^{*}_{A} b$ if there is a Morley sequence $\langle b_{i} : i < \omega \rangle$ over $A$ with $b_{0} = b$ which is $Aa$-indiscernible.  

\begin{prop} \label{amalgamation criterion}
Suppose that with respect to $T$, $\ind^{*}$ satisfies the independence theorem for Lascar strong types over arbitrary sets, that is, given any set $A$, $c_{0} \equiv^{L}_{A} c_{1}$, with $c_{0} \ind^{*}_{A} a$, $c_{1} \ind^{*}_{A} b$, and $a \ind^{*}_{A} b$, then there is $c_{*} \models \text{Lstp}(c_{0}/Aa) \cup \text{Lstp}(c_{1}/Ab)$ with $c_{*} \ind^{*}_{A} ab$.   Then $T$ does not have the skew tree property.  
\end{prop}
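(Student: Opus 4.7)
The plan is to argue by contradiction: suppose $T$ has the skew tree property, witnessed by $\varphi(x;y)$ and sequences $\langle a_i : i<\omega+\omega\rangle$, $\langle b_i:i<\omega\rangle$ as in the definition. Set $A := a_{<\omega}$. Since $\{\varphi(x;b_i) : i<\omega\}$ is inconsistent, compactness gives $k\geq 2$ for which it is $k$-inconsistent, and the target is to build some $c^*\models\bigwedge_{i<k}\varphi(x;b_i)$, yielding the desired contradiction.

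First I would assemble an amalgamation triangle over $A$. The consistency of $\{\varphi(x;a_{\omega+i}):i<\omega\}$ furnishes $c\models\varphi(x;b_0)$, since $b_0=a_\omega$. Using the $Ab_{>0}$-indiscernibility of $\langle a_{\omega+i}\rangle$ together with the existence axiom, I would arrange (by extracting a Morley sequence via Erd\H{o}s--Rado from a long enough non-forking realization, transporting by an $A$-automorphism, and choosing $c$ suitably) a Morley sequence over $A$ starting at $b_0$ that remains $Ac$-indiscernible, giving $c\ind^*_A b_0$. By a parallel argument using the $A$-indiscernibility of $\langle b_i\rangle$---whose tail $\langle b_i:i\geq 1\rangle$ is automatically $Ab_0$-indiscernible---I would obtain $b_0\ind^*_A b_1$. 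Finally, applying an $A$-automorphism sending $b_0$ to $b_1$ (legitimate since $b_0\equiv^L_A b_1$, both lying on the $A$-indiscernible sequence $\langle b_i\rangle$) to the configuration witnessing $c\ind^*_A b_0$ yields $c'\equiv^L_A c$ with $c'\models\varphi(x;b_1)$ and $c'\ind^*_A b_1$.

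With the triangle $c\equiv^L_A c'$, $c\ind^*_A b_0$, $c'\ind^*_A b_1$, $b_0\ind^*_A b_1$ in place, the hypothesis of the proposition delivers $c_2^*\models\Lstp(c/Ab_0)\cup\Lstp(c'/Ab_1)$ with $c_2^*\ind^*_A b_0b_1$; in particular $c_2^*\models\varphi(x;b_0)\wedge\varphi(x;b_1)$. I would then iterate: given $c_m^*\models\bigwedge_{i<m}\varphi(x;b_i)$ with $c_m^*\ind^*_A b_{<m}$, amalgamate $c_m^*$ with a fresh conjugate of $c$ at $b_m$, using $b_{<m}\ind^*_A b_m$ (derived in the same style from the indiscernibility of $\langle b_i\rangle$) as the third edge of the triangle. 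After $k-1$ such amalgamations, $c_k^*$ realizes $k$ many $\varphi(x;b_i)$, contradicting $k$-inconsistency.

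The hardest part will be translating the raw indiscernibility provided by the STP definition into the Morley-sequence form demanded by $\ind^*$. The witness sequences $\langle a_{\omega+i}\rangle$ and $\langle b_i\rangle$ are only given as indiscernible, not Morley, whereas each of the three independences $c\ind^*_A b_0$, $c'\ind^*_A b_1$, $b_0\ind^*_A b_1$ explicitly asks for a Morley sequence over $A$. Producing these Morley sequences while preserving the compatibility with $c$, $c'$, $b_0$, and $b_1$ is the crux, and will rely on the existence axiom together with careful Erd\H{o}s--Rado extraction. A closely related verification is needed at each iteration step to secure $b_{<m}\ind^*_A b_m$ in a form compatible with the previously built $c_m^*$.
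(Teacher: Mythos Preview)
Your overall shape---contradiction, base set $A=a_{<\omega}$, amalgamate along the $b_i$'s---matches the paper, but there is a real gap: you invoke the existence axiom, and the proposition does not assume it. This proposition lives in Section~6, outside the blanket assumption of NSOP$_1$ + existence that governs Sections~3--5. So your plan to manufacture Morley sequences by ``long enough non-forking realization plus Erd\H{o}s--Rado'' is not available.

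The point you flag as ``the hardest part'' is in fact free. Since $A=a_{<\omega}$ is an initial segment of the indiscernible sequence $\langle a_i:i<\omega+\omega\rangle$, every $a_{\omega+i}$ has type over $Aa_{>\omega+i}$ finitely satisfiable in $A$; that is, $a_{\omega+i}\ind^u_A a_{>\omega+i}$. So $\langle a_{\omega+i}\rangle$ is already a (coheir) Morley sequence over $A$---no existence axiom, no extraction. Because this sequence is $Ab_{>0}$-indiscernible and begins at $b_0=a_\omega$, you get $b_{>0}\ind^*_A b_0$ immediately, and by $A$-indiscernibility of $\langle b_i\rangle$ then $b_{>i}\ind^*_A b_i$ for every $i$. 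Similarly, after making $\langle a_i\rangle$ indiscernible over a realization $c$ via Ramsey, the same sequence witnesses $c\ind^*_A b_0$.

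This also forces the direction of the induction. You propose to amalgamate forward, needing $b_{<m}\ind^*_A b_m$ at each step, i.e.\ a Morley sequence over $A$ starting at $b_m$ that is $Ab_{<m}$-indiscernible. Nothing in the STP data supplies that without existence: the tail $\langle b_i:i\geq m\rangle$ is $Ab_{<m}$-indiscernible but not known to be Morley over $A$, and the Morley sequence you do have is indiscernible over $Ab_{>0}$, not over $Ab_{<m}$. The paper instead amalgamates backward, from $b_N$ down to $b_0$, so that the required third edge at each step is $(b_{N-k},\ldots,b_N)\ind^*_A b_{N-k-1}$, which follows from $b_{>i}\ind^*_A b_i$. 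Once you make that switch and drop existence, the rest of your outline (Lascar-conjugate $c$ to each $b_i$, apply the hypothesis iteratively) is exactly right.
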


\begin{proof}
Suppose not, let $\varphi$, $\langle a_{i} : i < \omega + \omega \rangle$, $\langle b_{i} : i < \omega \rangle$ witness the skew tree property.  Let $A = \{a_{i} : i < \omega\}$.  From the definitions, we have $a_{\geq \omega}$ is an $Ab_{>0}$-indiscernible sequence.  As $\langle a_{i} : i < \omega + \omega\rangle$ is indiscernible, we have $a_{\omega + i} \ind^{u}_{A} a_{>\omega + i}$ for all $i$.  It follows that $a_{\geq \omega}$ is an $\ind^{u}$-Morley sequence enumerated reverse over $A$ (hence a Morley sequence over $A$).  As $a_{\omega} = b_{0}$ and $a_{\geq \omega}$ is $Ab_{>0}$-indiscernible, it follows that $b_{>0} \ind^{*}_{A} b_{0}$.  As $\langle b_{i} : i < \omega \rangle$ is an $A$-indiscernible sequence, it follows that $b_{>i} \ind^{*}_{A} b_{i}$ for all $i$.  Let $c \models \{\varphi(x;a_{i}) : i < \omega + \omega \}$ be a realization.  By Ramsey, compactness, and an automorphism, we may assume $\langle a_{i} : i < \omega + \omega \rangle$ is $c$-indiscernible, hence in particular $c \ind^{*}_{A} b_{0}$, since $b_{0} = a_{\omega}$.  

By assumption, $\{\varphi(x;b_{i}) : i \leq N\}$ is inconsistent for some $N$.  We will prove by induction on $k \leq N$ that there is some $c_{k} \models \{\varphi(x;b_{i}) : N-k \leq i \leq N\}$ with $c_{k} \ind^{*}_{A} (b_{i})_{N-k \leq i \leq N}$ to obtain a contradiction.  For $k = 0$, choose any $c_{0}$ with $c_{0}b_{N} \equiv_{A} cb_{0}$.  Now suppose for $k<N$, we have $c_{k} \models  \{\varphi(x;b_{i}) : N-k \leq i \leq N\}$ with $c_{k} \ind^{*}_{A} (b_{i})_{N-k \leq i \leq N}$.  We note that $b_{N-k-1}$ and $b_{N-k}$ start an $A$-indiscernible sequence so they have the same Lascar strong type over $A$.  Fix $\sigma \in \text{Autf}(\mathbb{M}/A)$ with $\sigma(b_{N-k}) = b_{N-k-1}$ and let $c' = \sigma(c_{k})$.  Then $c' \equiv^{L}_{A} c_{k}$, $\mathbb{M} \models \varphi(c';b_{N-k-1})$, and $c' \ind^{*}_{A} b_{N-k-1}$.  Recall that $(b_{i})_{N-k \leq i \leq N} \ind^{*}_{A} b_{N-k-1}$ so the independence theorem for Lascar strong types implies that there is $c_{*} \models \text{Lstp}(c'/Ab_{N-k-1}) \cup \text{Lstp}(c_{k}/A(b_{i})_{N-k \leq i \leq N})$ with $c_{*} \ind^{*}_{A} (b_{i})_{N-k-1 \leq i \leq N}$.  Note that $c_{*} \models \{\varphi(x;b_{i}) : N-k-1 \leq i \leq N\}$ so we may set $c_{k+1} = c_{*}$.  Continuing the induction, we find $c_{N} \models \{\varphi(x;b_{i}) : i \leq N\}$, contradicting the fact that this set of formulas is inconsistent.  This contradiction completes the proof.  
\end{proof}

  \begin{prop} Assume $T$ has existence (over any set). Then the following are equivalent.
\begin{enumerate}
\item $T$ is NSOP$_1$.
\item Kim's lemma holds for Kim-independence over any set.
\item The independence theorem for Lascar types for $\ind^{*}$ holds over any set.
\item The independence theorem for Lascar types for $\ind^{K}$ holds over any set.  
\item Kim-dividing satisfies symmetry over any set.
\item $T$ does not have STP.
\end{enumerate}
\end{prop}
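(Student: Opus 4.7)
The plan is to establish the forward implications $(1) \Rightarrow (i)$ for $i \in \{2,3,4,5,6\}$ by invoking results from earlier sections, and then close the equivalences by proving $(2)\Rightarrow(1)$, $(3)\Rightarrow(6)$, $(6)\Rightarrow(1)$, and adaptations of the last two to handle (4) and (5).

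Concretely, for the forward direction: $(1)\Leftrightarrow(2)$ is Corollary \ref{nsop1_KL}; $(1)\Rightarrow(4)$ is Theorem \ref{3amalg}; and $(1)\Rightarrow(5)$ is Corollary \ref{symmetry}. For $(1)\Rightarrow(3)$, I combine (2) and (4): under Kim's lemma, $\ind^K$ and $\ind^{*}$ coincide up to an automorphism fixing the base pointwise. Indeed, $a\ind^{*}_A b$ gives a Morley sequence in which every formula of $\mathrm{tp}(a/Ab)$ is consistent, so Kim's lemma yields $a\ind^K_A b$; conversely Proposition \ref{chaincondition} delivers $a'\equiv_{Ab} a$ with $a'\ind^{*}_A b$. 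Hence the $\ind^K$-independence theorem from (4) transfers to an $\ind^{*}$-independence theorem, which is (3). Finally $(1)\Rightarrow(6)$ follows by feeding (3) into Proposition \ref{amalgamation criterion}.

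The backward implications $(2)\Rightarrow(1)$ (Corollary \ref{nsop1_KL}), $(3)\Rightarrow(6)$ (Proposition \ref{amalgamation criterion}), and $(6)\Rightarrow(1)$ (contrapositive of Lemma \ref{between simple and NSOP1}(1), since an SOP$_1$ formula has STP) already yield the equivalence of (1), (2), (3), and (6). To bring (4) and (5) into the equivalence I propose showing $(4)\Rightarrow(6)$ and $(5)\Rightarrow(6)$ by contradiction: given STP witnesses $\langle a_i\rangle_{i<\omega+\omega}$, $\langle b_i\rangle_{i<\omega}$, set $A = a_{<\omega}$ and observe that $\langle a_{\omega+i}\rangle_{i<\omega}$ is a Morley sequence over $A$ starting with $b_0 = a_\omega$ and indiscernible over $Ab_{>0}$. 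For (4), I replay the amalgamation chain of Proposition \ref{amalgamation criterion} with $\ind^K$ in place of $\ind^{*}$, using the $\ind^K$-independence theorem at each step to build realizations of progressively longer initial segments of $\{\varphi(x;b_i)\}$. For (5), symmetry of Kim-dividing converts the alleged inconsistency of $\{\varphi(x;b_i)\}$ into a direct contradiction with the $c$-indiscernibility of $\langle a_i\rangle$, where $c\models\{\varphi(x;a_i):i<\omega+\omega\}$ is extracted by Ramsey.

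The principal obstacle lies in the adaptations for (4) and (5): since Kim's lemma is not yet available as a hypothesis, I cannot freely identify $\ind^K$ with $\ind^{*}$ in the Proposition \ref{amalgamation criterion} template. Every Kim-independence needed in the chain (notably $b_{>i}\ind^K_A b_i$ and $c\ind^K_A b_0$) must be extracted directly from the indiscernibility data of the STP configuration, rather than pulled from an $\ind^{*}$-counterpart, and doing so without circularity is the delicate technical step in the proof.
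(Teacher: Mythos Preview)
Your treatment of the forward implications $(1)\Rightarrow(2),(3),(4),(5),(6)$ and of $(1)\Leftrightarrow(2)$, $(3)\Rightarrow(6)$, $(6)\Rightarrow(1)$ matches the paper. The argument that $\ind^{*}=\ind^{K}$ under (1) is also what the paper does (and in fact the two relations coincide literally, not merely up to an automorphism: if $a\ind^{K}_{A}b$, Proposition~\ref{chaincondition} gives $a'\equiv_{Ab}a$ with a Morley sequence in $\tp(b/A)$ that is $Aa'$-indiscernible; pushing this sequence by an $Ab$-automorphism sending $a'$ to $a$ gives one that is $Aa$-indiscernible).

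Where you diverge from the paper is in closing the loop for (4) and (5). The paper does not attempt $(4)\Rightarrow(6)$ or $(5)\Rightarrow(6)$ at all; instead it observes that (4) and (5), being statements over \emph{every} set, specialize to the corresponding statements over models, and these are already known to imply NSOP$_{1}$: $(5)\Rightarrow(1)$ is Fact~\ref{basic kimindep facts}(3), and $(4)\Rightarrow(1)$ is \cite[Theorem~6.5]{kaplan2017kim}. This route is short and avoids entirely the obstacle you flag.

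Your proposed route via STP, by contrast, runs straight into that obstacle and you do not resolve it. For $(4)\Rightarrow(6)$ you need, at minimum, $b_{>i}\ind^{K}_{A}b_{i}$ and $c\ind^{K}_{A}b_{0}$, but the STP configuration only gives you $\ind^{*}$ in those positions; without Kim's lemma there is no reason a single Morley sequence witnessing consistency should preclude Kim-dividing along a different Morley sequence. Your sketch for $(5)\Rightarrow(6)$ is even less clear: the inconsistency of $\{\varphi(x;b_{i})\}$ is along an $A$-indiscernible sequence that is not assumed Morley, so it does not directly yield any Kim-dividing statement to which symmetry could be applied. In short, you have correctly identified the gap but not filled it, whereas the paper sidesteps it by citing the over-models equivalences.
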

\begin{proof}
By Corollary \ref{nsop1_KL} we know that (1) and (2) are equivalent. (1) implies (4) by Theorem \ref{3amalg}, 
(1) implies (5) by Corollary \ref{symmetry}, (4) implies (1) by \cite[Theorem 6.5]{kaplan2017kim}
and (5) implies (1) by Fact \ref{basic kimindep facts}.  Thus, (1),(2), (4), and (5) are equivalent.
Moreover (3) $\Rightarrow$ (6) $\Rightarrow$ (1) is Proposition \ref{amalgamation criterion} and Lemma \ref{between simple and NSOP1}.  Finally, assuming (1) (and hence also (2) and (4)), we have $\ind^{*} = \ind^{K}$ by (2) hence (3) follows from (4).  This completes the equivalence.  
\end{proof}

 \begin{cor}
There is a non-simple theory that does not have the skew tree property.
\end{cor}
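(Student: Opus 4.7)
The plan is essentially to cite the preceding proposition together with one of the examples in Fact \ref{examples}. By the equivalence $(1)\Leftrightarrow(6)$ of the preceding proposition, any theory that satisfies the existence axiom and is NSOP$_1$ automatically fails to have STP. So it suffices to exhibit a non-simple NSOP$_1$ theory for which existence holds.

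I would take, for instance, $T^{*}_{feq}$, the theory of parametrized equivalence relations from Fact \ref{examples}(3). This theory is NSOP$_1$ by \cite[Corollary 6.4]{ArtemNick}, it satisfies existence as verified in the remark following Fact \ref{examples}, and it is well-known to be non-simple (the generic parametrized family of equivalence relations of D\v{z}amonja--Shelah was one of the original motivating examples of a non-simple NSOP$_1$ theory). Any of the other items in Fact \ref{examples} could be substituted: ACFG, $T^{\emptyset}_{L}$ for a language $L$ with at least one relation symbol of arity $\geq 2$, or an $\omega$-free PAC field would work equally well.

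The only thing to be slightly careful about is confirming non-simplicity of the chosen example; this is not shown in the paper itself, but in each case it is standard and documented in the references cited in Fact \ref{examples}. There is no real obstacle; the entire content of the corollary is the combination of the structural equivalence established in the previous proposition with the mere existence of a non-simple example on the list.
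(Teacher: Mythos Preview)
Your proof is correct and matches the paper's approach: the paper's proof simply notes that all examples in Fact \ref{examples} are known to be non-simple NSOP$_1$ theories with existence, hence lack STP by the preceding proposition. Your version is slightly more careful in singling out a specific example and noting the caveat on $T^{\emptyset}_L$, but the content is the same.
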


\begin{proof}
All of the examples listed in Fact \ref{examples} are known to be NSOP$_{1}$ non-simple theories satisfying the existence axiom, and therefore do not have the skew tree property.  
\end{proof}

\begin{quest}\label{existence}
 Does the existence axiom hold in any NSOP$_1$ theory?
\end{quest}

 \begin{rem}\label{existence}
We observe that  if a formula $\varphi(x)$ (over $\emptyset$, say) implies $ \phi(x,a_0)\vee \psi(x,b_0)$ and each of
$\phi(x,a_0), \psi(x,b_0)$ $2$-divides over $\emptyset$ then $T$ has the strict order property: 
There are indiscernibles $\langle a_i\rangle,\langle b_i\rangle$ witnessing $2$-dividing of $\phi(x,a_0)$ and $\psi(x,b_0)$, respectively. 
Then for $i>0$, $\varphi(x)\wedge\phi(x,a_i)\models \varphi(x)\wedge\psi(x,b_0)$ and
$\varphi(x)\wedge\psi(x,b_i)\models \varphi(x)\wedge\phi(x,a_0)$. Now there is $a'_1$ such that $a'_1b_1\equiv a_1b_0$, so $\varphi(x)\wedge\phi(x,a'_1)$ implies $\varphi(x)\wedge\phi(x,a_0)$ but not the converse. Since $a'_1\equiv a_0$, $T$ has the strict order property. 
 Improving this idea to the general case seems difficult.
\end{rem}

Our proof of Kim's lemma relied on Lemma \ref{heirbase2}.  This lemma made heavy use of the assumption of non-forking existence for all types over all sets.  Consequently, it would be very interesting to know if the following local version could be proved without this assumption:

\begin{quest} \label{local version}
Suppose $T$ is NSOP$_{1}$, $p \in S(A)$, and $\langle a_{i} : i < \omega \rangle$ and $\langle b_{i} : i < \omega\rangle$ are both Morley sequences over $A$ in $p$.  Is it the case that $\{\varphi(x;a_{i}): i < \omega\}$ is consistent if and only if $\{\varphi(x;b_{i}) : i < \omega\}$ is consistent?
\end{quest}
 
 Finally, we ask several questions about the skew tree property:

\begin{quest}
\text{ }
\begin{enumerate}
\item Is NSOP$_{1}$ the same thing as $T$ does not have the skew tree property?
\item Is the property of not having the skew tree property preserved under reduct?
\item Is having the skew tree property equivalent to having the skew tree property witnessed by a configuration with $\{\varphi(x;b_{i}) : i < \omega\}$ 2-inconsistent?
\item Does not having the skew tree property imply every complete type has a global non-forking extension?
\end{enumerate}
\end{quest}

\bibliographystyle{plain}
\bibliography{ms.bib}{}

\begin{thebibliography}{10}

\bibitem{chatzidakis2002properties}
Zo{\'e} Chatzidakis.
\newblock Properties of forking in $\omega$-free pseudo-algebraically closed
  fields.
\newblock {\em The Journal of Symbolic Logic}, 67(03):957--996, 2002.

\bibitem{fff}
Artem Chernikov.
\newblock Some counterexamples for forking, dividing, invariance.
\newblock Unpublished note available at
  https://ffbandf.wordpress.com/tag/invariant-types/.

\bibitem{ChernikovNTP2}
Artem Chernikov.
\newblock Theories without the tree property of the second kind.
\newblock {\em Ann. Pure Appl. Logic}, 165(2):695--723, 2014.

\bibitem{ArtemNick}
Artem Chernikov and Nicholas Ramsey.
\newblock On model-theoretic tree properties.
\newblock {\em Journal of Mathematical Logic}, page 1650009, 2016.

\bibitem{conant2019independence}
Gabriel Conant and Alex Kruckman.
\newblock Independence in generic incidence structures.
\newblock {\em The Journal of Symbolic Logic}, 84(2):750--780, 2019.

\bibitem{d2018forking}
Christian d'Elb{\'e}e.
\newblock Forking, imaginaries and other features of $\mathrm{ACFG}$.
\newblock {\em ArXiv preprint arXiv:1812.09378}, 2018.

\bibitem{d2018generic}
Christian d'Elb{\'e}e.
\newblock Generic expansions by a reduct.
\newblock {\em ArXiv preprint arXiv:1810.11722}, 2018.

\bibitem{dvzamonja2004maximality}
Mirna D{\v{z}}amonja and Saharon Shelah.
\newblock On $\vartriangleleft^{*}$-maximality.
\newblock {\em Annals of Pure and Applied Logic}, 125(1):119--158, 2004.

\bibitem{kaplan2017kim}
Itay Kaplan and Nicholas Ramsey.
\newblock On $\mathrm{K}$im-independence.
\newblock {\em J. Eur. Math. Soc. (JEMS)}, 2019.
\newblock Accepted, \url{arXiv:1702.03894}.

\bibitem{kim2001simplicity}
Byunghan Kim.
\newblock Simplicity, and stability in there.
\newblock {\em The Journal of Symbolic Logic}, 66(02):822--836, 2001.

\bibitem{kim2013simplicity}
Byunghan Kim.
\newblock {\em Simplicity theory}.
\newblock Oxford University Press, 2013.

\bibitem{KimKimScow}
Byunghan Kim, Hyeung-Joon Kim, and Lynn Scow.
\newblock Tree indiscernibilities, revisited.
\newblock {\em Arch. Math. Logic}, 53(1-2):211--232, 2014.

\bibitem{KimPillay}
Byunghan Kim and Anand Pillay.
\newblock Simple theories.
\newblock {\em Annals of Pure and Applied Logic}, 88(2-3):149--164, 1997.

\bibitem{kruckman2018generic}
Alex Kruckman and Nicholas Ramsey.
\newblock Generic expansion and skolemization in $\text{NSOP}_1$ theories.
\newblock {\em Annals of Pure and Applied Logic}, 169(8):755--774, 2018.

\bibitem{Lascar79}
Daniel Lascar and Bruno Poizat.
\newblock An introduction to forking.
\newblock {\em Journal of Symbolic Logic}, 44(3):330--350, 1979.

\bibitem{Shami}
Ziv Shami.
\newblock Definability in low simple theories.
\newblock {\em Journal of Symbolic Logic}, 65(4):1481--1490, 2000.

\bibitem{shelah1980simple}
Saharon Shelah.
\newblock Simple unstable theories.
\newblock {\em Annals of Mathematical Logic}, 19(3):177--203, 1980.

\end{thebibliography}

\end{document}